\title{Distribution rules of crystallographic systematic absences on the Conway topograph and their application to powder auto-indexing\thanks{This research was partly supported by a Grant-in-Aid for Young Scientists (B) (No. 22740077) and the Ibaraki prefecture (J-PARC-23D06).}} 
\author{Ryoko Oishi-Tomiyasu\thanks{High Energy Accelerator Research Organization, Tsukuba, Japan. 
(ryoko.tomiyasu@kek.jp)}}
\newtheorem{theorem}{Theorem}
\newtheorem{proposition}{Proposition}[section]
\newtheorem{lemma}{Lemma}[section]
\newtheorem{corollary}{Corollary}[section]
\newtheorem{definition}{Definition}[section]
\newtheorem{fact}{Fact}
\newtheorem{example}{Example}
\newtheorem{remark}{Remark}
\newcommand{\IE}{\textit{i.e., }}
\newcommand{\CF}{\textit{cf.\ }}	
\newcommand{\EG}{\textit{e.g., }}
\newcommand{\IntegerRing}{{\mathbb Z}}
\newcommand{\RationalField}{{\mathbb Q}}
\newcommand{\RealField}{{\mathbb R}}
\newcommand{\ComplexField}{{\mathbb C}}
\newcommand{\tr}[1]{\hspace{0.5mm}{}^t\hspace{-0.5mm}#1}
\newcommand{\abs}[1]{ {\left\lvert #1 \right\rvert} }
\begin{document}
\maketitle

\begin{abstract}
Powder auto-indexing is the crystallographic problem of lattice determination from an average theta series.
There, in addition to all the multiplicities, the lengths of part of lattice vectors  
cannot be obtained owing to systematic absences.
As a consequence,  solutions are not always unique.
We develop a new algorithm to enumerate powder auto-indexing solutions.
This is a novel application of the reduction theory of positive-definite quadratic forms to a problem of crystallography.
Our algorithm is proved to be effective for all types of systematic absences,
using their newly obtained common properties. 
The properties are stated as distribution rules for 
lattice vectors corresponding to systematic absences on a topograph.
Conway defined topographs for 2-dimensional lattices
as graphs whose edges are associated with $\abs{l_1}^2$, $\abs{l_2}^2$, $\abs{l_1+l_2}^2$, $\abs{l_1-l_2}^2$,
where $l_1, l_2$ are lattice vectors.
In our enumeration algorithm, topographs are utilized as a network of lattice vector lengths.  
As a crystal structure is a lattice of rank 3,
the definition of topographs is generalized to any higher dimensional lattices using Voronoi's second reduction theory.
The use of topographs allows us to speed up the algorithm.
The computation time is reduced to 1/250--1/32, when it is applied to real powder diffraction patterns.
Another advantage of 
our algorithm is its robustness to missing or false elements in the set of lengths extracted from a powder diffraction pattern.
Conograph is the powder indexing software which implements the algorithm.
We present results of Conograph for 30 diffraction patterns, including some very difficult cases.
\end{abstract}



\section{Introduction}

Lattice determination problems have been the target of much interest in mathematics.
In particular, many mathematicians worked on lattice determination from a theta series in the second half of the 20th century,
with the aim of providing higher dimensional counterexamples for the isospectral problem proposed by Kac \cite{Kac66}.  
Lattice determination from a theta series was finally resolved in \cite{Schiemann90}, \cite{Schiemann97}.
However, it is not well known in the mathematics community that a similar problem was being studied in crystallography during this period.

The lattice determination problem in crystallography is called powder auto-indexing.
There a set of lattice parameters is determined from a powder diffraction pattern. Powder auto-indexing is equivalent to lattice determination from an average theta series, as shown in Appendix \ref{On equivalence between powder diffraction pattern and average theta series}.
At present, powder diffraction is the principal technique for determining the structure of materials that do not necessarily form large-sized crystals.  A highly automated system of analyzing powder diffraction patterns
is required, both for basic scientific research and a wide range of industrial applications, not least in the pharmaceutical industry.
Hence, it is necessary to establish a powerful and reliable powder auto-indexing algorithm and software.

In this paper, the powder auto-indexing problem is formulated and solved.
This is regarded as a new application of the reduction theory of positive-definite quadratic forms to crystallography.
In consideration of its application to crystallography, we mainly discuss cases when the lattice has a rank of $N = 2, 3$.
Similar to a theta series,
information about the lengths of lattice vectors is extracted from an average theta series (Section \ref{Powder indexing problem}).
The most notable difference is that 
it is very difficult to acquire the multiplicities of the lengths (\IE number of lattice vectors satisfying $\abs{l}^2 = q$ for fixed $q \in \RealField_{> 0}$) from an average theta series,
and only a part of the lengths can be obtained owing to the crystallographic phenomenon of systematic absences.

Systematic absences have not been defined sufficiently explicitly as a mathematical notion.
We provide a definition in Section \ref{Crystallographic group and extinction rules}.
As in the International Tables \cite{Hahn83},
systematic absences are classified by the pair $(G, H)$: 
\begin{itemize}
	\item isomorphism class of the crystallographic group $G \subset O(N) \ltimes \RealField^N$,
	\item conjugate class of the finite subgroup $H \subset G$.
\end{itemize}

For fixed $N$, systematic absences are categorized by
finitely many pairs (Proposition \ref{prop:basic property of extinction rules1}).
When a periodic function $\wp$ belongs to the type $(G, H)$,
the lengths of $l^* \in \Gamma_{ext}(G, H)$ is not directly extracted from the average theta series of $\wp$,
where $\Gamma_{ext}(G, H)$ is a subset of the reciprocal lattice $L^*$ of $L$ determined from $(G, H)$.

To date, powder auto-indexing algorithms have been studied and improved by experts in crystallography.
Among existing software packages, Ito \cite{Wolff58}, \cite{Visser69}, TREOR (trial-and-error method \cite{Werner85}), and DICVOL (dichotomy method \cite{Boultif2004}) are widely used.
McMaille (a grid search based on the Monte Carlo method \cite{LeBail2002}) and X-cell (dichotomy method \cite{Neumann2003}) were developed comparatively recently.
With regard to existing powder auto-indexing software, several considerable problems have been reported.
These include: 
\begin{itemize}
	\item existence of more than one solutions, 
	\item systematic absences, 
	\item missing or false elements in the set of extracted lengths,
	\item observation errors contained in the length values, 
	\item large zero-point shift, 
	\item overlapping peaks.
\end{itemize}
In some software, these problems are caused by limitations intended to suppress the computation time.
Therefore, both computation time and success rate should be considered to discuss powder auto-indexing algorithms.
(For example, although the simplest brute force grid search may obtain the highest success rate,
it takes from a few hours to a few days in monoclinic and triclinic cases.)
This paper presents a method that can resolve each of the problems listed above.
The first three are explained in more detail in the following discussion.
The next two are resolved using the estimated error ${\rm Err}[\abs{l}^2]$ of $\abs{l}^2$ (see ($A$\ref{item:Tolerance level}) in Section \ref{Precise formulation of powder auto-indexing problem and our strategy} and the beginning of Section \ref{Powder indexing algorithm}).
The final problem is also resolved naturally in our algorithm (Section \ref{Problems on quality of powder diffraction patterns}).

Our three main results are as follows. 
First, we formulate powder auto-indexing as a mathematical problem,
and summarize mathematical results on the cardinality of solutions in powder auto-indexing.
This is considered the most important foundation for the following discussion;
in general, a lattice $L$ is not determined uniquely in $N \geq 2$,
even if the rank $N$ of $L$ and all the lengths $l \in L$ are given
(Appendix \ref{Summary of known theorems on multiple solutions}).
On the other hand, the cardinality of $L_2$ satisfying $\Lambda_L = \Lambda_{L_2}$ is always finite in $N \leq 4$,
therefore it is possible to enumerate all such $L_2$ algorithmically (Appendix \ref{Appendix:lattice determination algorithm from complete set of the lengths of lattice vectors}).  
However, it is not certain whether the number of solutions is actually finite in powder auto-indexing,
owing to systematic absences and the finite observational range.
Fortunately, using both physical constraints and mathematical theorems,
we obtain a necessary condition for assuming the finiteness of solutions, except for events with zero probability (Section \ref{Precise formulation of powder auto-indexing problem and our strategy}).
As a result, it is only necessary to enumerate finitely many solutions for powder auto-indexing.

Our second result is a new algorithm to enumerate powder auto-indexing solutions
(Sections \ref{Lattice determination from weighted theta series in N = 2} and \ref{Lattice determination from weighted theta series in N = 3}).
We prove that it works regardless of the type of systematic absences. 
Our basic idea is to use the graph whose edges are associated with a set of squares of lengths $\abs{l_1^*}^2$, $\abs{l_2^*}^2$, $\abs{l_1^*+l_2^*}$, $\abs{l_1^*-l_2^*}^2$ for some lattice vectors $l_1^*, l_2^* \in L^*$.

In Ito's method,
the parallelogram law $2( \abs{l_1^*}^2 + \abs{l_2^*}^2 ) = \abs{l_1^*+l_2^*}^2 + \abs{l_1^*-l_2^*}^2$
is used to obtain Gram matrices of sublattices of rank 2 (called \textit{zones}).
However,
simple use of the parallelogram law does not always provide successful results,
as introduced in Fact \ref{fact:no Ito's equation} of Section \ref{Results in N = 3}.
Therefore, instead of just enumerating a set of lengths satisfying the parallelogram law, our algorithm utilizes graphs whose edges are associated with $\abs{l_1^*}^2$, $\abs{l_2^*}^2$, $\abs{l_1^*+l_2^*}$, and $\abs{l_1^*-l_2^*}^2$ as a network of lattice vector lengths.
This provides a comprehensive method of analyzing how 
the two sets $\{ \abs{l_1^*}^2, \abs{l_2^*}^2, \abs{l_1^*+l_2^*}^2, \abs{l_1^*-l_2^*}^2 \}$ and $\{ \abs{k_1^*}^2, \abs{k_2^*}^2, \abs{k_1^*+k_2^*}^2, \abs{k_1^*-k_2^*}^2 \}$ are related to each other.

For lattices of rank 2, such a graph has already been defined by Conway, who called it a topograph \cite{Conway97}.  
As explained in section \ref{Definition for lattices of general rank},
a graph having the required property is constructed for general $N$ by using Voronoi's second reduction theory.
We also call this a topograph.
Basic properties of topographs for lattices of rank $N = 2, 3$ are explained in Section \ref{Topographs for low-dimensional lattices}.

In Section \ref{Main theorems}, we shall see how primitive vectors of $l^* \in L^*$ belonging to $\Gamma_{ext}(G, H)$ 
are distributed on a topograph.
Although the following theorem for the case of $N = 2$ has not been described explicitly,
it provides a theoretical reason for the parallelogram law working appropriately for $N = 2$.

\begin{theorem}\label{fact:two-dimensional extinction rules}
Let $(G, H)$ be a type of systematic absence in $N = 2$,
and let $L^*$ be the reciprocal lattice of $L$, a lattice consisting of all the translations in $G$.
Then, for any primitive vector $l^*$ of $L^*$ belonging to $\Gamma_{ext}(G, H)$, 
there exists $l_2^* \in L^*$ such that $l_1^* \cdot l_2^* = 0$ and 
$l_1^*, l_2^*$ make a basis of $L^*$.
\end{theorem}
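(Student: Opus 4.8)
The plan is to combine the finiteness in Proposition~\ref{prop:basic property of extinction rules1} with the elementary fact that, in $N=2$, the only non-symmorphic operations are glide reflections (a rotation of the plane has no axis, so there are no screw rotations). Hence, if $\Gamma_{ext}(G,H,U)\neq\emptyset$ then $G$ must contain a \emph{genuine} glide reflection $g=\{r\mid\vec v\}$: a reflection $r\in O(2)$ preserving $L$, together with a translation whose component $\vec v_\parallel$ along the mirror line satisfies $2\vec v_\parallel\in L$ but $\vec v_\parallel\notin\pi_{E_+}(L)$, where $E_+$ is the $(+1)$-eigenline of $r$ and $\pi_{E_+}$ the orthogonal projection onto it. (The component of $\vec v$ perpendicular to the mirror line can be removed by conjugating with a translation and is immaterial; its possible values only move the mirror line and account for the choices of the component $U$.) Writing the invariance under $g$ of a density $\wp$ of type $(G,H,U)$ in Fourier coefficients gives $\widehat\wp(l^*)=e^{2\pi i\,l^*\cdot\vec v}\,\widehat\wp(l^*)$ whenever $rl^*=l^*$, so $\widehat\wp(l^*)$ is forced to vanish only when $l^*\in E_+$ and $e^{2\pi i\,l^*\cdot\vec v}\neq 1$. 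Since a product of glides and mirrors of $G$ is again a glide or a mirror, this gives in every case $\Gamma_{ext}(G,H,U)\subseteq\bigcup_{g}\bigl(E_+(g)\cap L^*\bigr)$, a finite union of primitive rank-one sublattices of $L^*$, one ``axial row'' for each genuine glide $g$ of $G$.

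Next I would fix a primitive $l_1^*\in\Gamma_{ext}(G,H,U)$ together with a genuine glide $g$ of $G$ such that $l_1^*\in E_+(g)$; write $r=r_g$, with eigenlines $E_+,E_-$. Since $r$ is a reflection of $\RealField^2$ preserving the rank-two lattice $L$, we have $2L\subseteq(L\cap E_+)\oplus(L\cap E_-)\subseteq L$, and a short argument (an element of $L$ lying in $2L\cap E_+$ would have to lie in $\tfrac12(L\cap E_+)$, impossible) excludes index $4$, so the index is $1$ or $2$. The key point is that the existence of a genuine glide with linear part $r$ forbids index $2$: in that case $\pi_{E_+}(L)=\tfrac12(L\cap E_+)$, while $2\vec v_\parallel\in L\cap E_+$ forces $\vec v_\parallel\in\tfrac12(L\cap E_+)=\pi_{E_+}(L)$, making $g$ equivalent, modulo $L$ and conjugation, to the pure mirror $\{r\mid0\}$ — which produces no extinction, contradicting $l_1^*\in\Gamma_{ext}$. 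Therefore the index is $1$, i.e.\ $L=(L\cap E_+)\oplus(L\cap E_-)$, and dually $L^*=(L^*\cap E_+)\oplus(L^*\cap E_-)$ is an orthogonal direct sum; let $p_+^*,p_-^*$ be primitive generators of the two summands, so $p_+^*\cdot p_-^*=0$.

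It then remains only to identify the vectors. As $l_1^*$ is primitive and lies in $E_+\cap L^*=\IntegerRing p_+^*$, we get $l_1^*=\pm p_+^*$; setting $l_2^*:=p_-^*$ gives $l_1^*\cdot l_2^*=0$ and $\{l_1^*,l_2^*\}$ a basis of $L^*$, which is exactly the assertion. (As a consistency check: with $\vec v_\parallel=\tfrac12 e_+$ and $e_+$ a generator of $L\cap E_+$, the extinct vectors in $E_+\cap L^*$ are precisely the odd multiples of $p_+^*$, whose primitive members are $\pm p_+^*$.)

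I expect the second paragraph to be the genuine obstacle: the eigenline bookkeeping is routine, but ruling out the ``centered'' lattice from the sheer existence of a genuine glide is where the structure of the two-dimensional crystallographic groups really enters — and it is exactly this that explains why the parallelogram law behaves well for $N=2$. If one prefers to avoid the abstract step, a safe fallback is the explicit classification: among the $N=2$ types only those with glide lines have $\Gamma_{ext}\neq\emptyset$, and each of them has a primitive rectangular (or square) lattice, for which $L^*=\IntegerRing a_1^*\oplus\IntegerRing a_2^*$ with $a_1^*\perp a_2^*$ and $\Gamma_{ext}$ consisting of odd multiples of $a_1^*$ and/or $a_2^*$; the required $l_2^*$ is then simply the other basis vector.
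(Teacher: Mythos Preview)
Your argument for the key case $H=L$ is correct and takes a genuinely different route from the paper. The paper proves Lemma~\ref{lem:tau l^* = l^*} via the topograph $CT_{2,S_0}$: a primitive $l_1^*$ fixed by $\tau\neq 1$ either sits at the edge of a ``well'' (yielding the orthogonal companion $l_2^*$ directly) or satisfies $l_1^*=l_2^*+\tau l_2^*$ for some basis $\{l_2^*,\tau l_2^*\}$; case~(b) is then eliminated by the cocycle identity $\nu_\tau\cdot l_1^*=\nu_{\tau^2}\cdot l_2^*\in\IntegerRing$. You instead argue on the direct lattice: since the reflection $r$ preserves $L$, the index $[L:(L\cap E_+)\oplus(L\cap E_-)]$ is $1$ or $2$, and index~$2$ would force $\vec v_\parallel\in\tfrac12(L\cap E_+)=\pi_{E_+}(L)$, hence $\nu_r\cdot l_1^*=\vec v_\parallel\cdot l_1^*\in\IntegerRing$, contradicting Corollary~\ref{cor:case of L=H}; the orthogonal splitting of $L$ then passes to $L^*$. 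Your route is more elementary---it is essentially the Bravais dichotomy ``primitive rectangular vs.\ centred rectangular'' for a $2$D lattice carrying a reflection---and avoids the topograph altogether, while the paper's route has the virtue of tying the result to the graph machinery used in the rest of the article. One caveat: your first paragraph asserts that for \emph{arbitrary} $(G,H,U)$ extinctions arise ``only'' from glides, but special-position constraints can in principle force further vanishing (e.g.\ a single atom at a $2$-fold centre in $p4$ gives $h+k$ odd extinct, though that triplet is invalid in the paper's sense). The paper handles $H\supsetneq L$ exactly as your fallback does---by inspection of the International Tables, verifying $\Gamma_{ext}(G,H,U)=\Gamma_{ext}(G)$ for all valid $2$D types---so this is not a defect of your overall argument, but the ``only when'' should be understood as resting on that inspection rather than on the glide analysis alone.
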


In brief, Theorem \ref{fact:two-dimensional extinction rules} claims
that $l^* \in \Gamma_{ext}(G, H)$ only appear in the gray area in Figure \ref{Lattice vectors contained in some Lambda_{ext}(G, H, x) (Case of N = 2)}, regardless of the type $(G, H)$.
With regard to the case of $H = \{ 1 \}$, 
we shall introduce a proof using topographs in Section \ref{Results in N = 2}.
The case $H \supsetneq L$ is not proved here
since it is verified easily by checking lists in \cite{Hahn83}.

\begin{figure}
\begin{center}
\scalebox{0.8}{\includegraphics{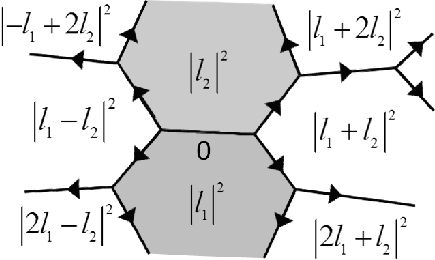}}
\end{center}
\caption{Reciprocal lattice vectors that are allowed to correspond to systematic absences.}
\label{Lattice vectors contained in some Lambda_{ext}(G, H, x) (Case of N = 2)}
\end{figure}

By Theorem \ref{fact:two-dimensional extinction rules},
a subgraph of a topograph with infinitely many edges 
is formed by unifying substructures as in Figure \ref{Substructure of a topograph corresponding to the parallologram law}
associated with the lengths $\abs{l_1^*}^2, \abs{l_2^*}^2, \abs{l_1^*+l_2^*}^2, \abs{l_1^*-l_2^*}^2$ of 
$l_1^*, l_2^* \in L^* \setminus \Gamma_{ext}(G, H)$ that are easily extracted from an average theta series.

\begin{figure}
\begin{center}
\scalebox{0.6}{\includegraphics{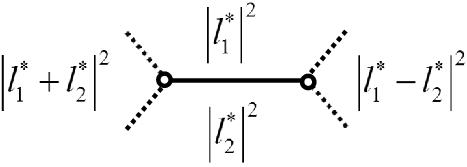}}
\end{center}
\caption{Substructure of a topograph corresponding to the parallelogram law.}
\label{Substructure of a topograph corresponding to the parallologram law}
\end{figure}

For $N = 3$, similar statements are proved in Theorems \ref{thm:distribution rules on CT_{2, S_2}} and \ref{thm:distribution rules on CT_{3, S_3}}.
In this case, it is necessary to use the following formula instead of the parallelogram law:
\begin{eqnarray}
	3 |l_1^*|^2 + |l_1^* + 2 l_2^*|^2 = 3 |l_2^*|^2 + |2 l_1^* + l_2^*|^2.
\end{eqnarray}

As another consequence of Theorems \ref{thm:distribution rules on CT_{2, S_2}} and \ref{thm:distribution rules on CT_{3, S_3}},
our algorithm enumerates
the Gram matrices $(l_i^* \cdot l_j^*)_{1 \leq i, j \leq 3}$
for multiple bases $l_1^*, l_2^*, l_3^*$ of the true solution $L^*$.
This makes
the enumeration procedure robust against missing or false elements in the set of extracted lengths,
as explained in Section \ref{Problems on quality of powder diffraction patterns}.

For the third result, we provide a method to speed up the enumeration process in Section \ref{Speed-up using topographs}.
In order not to reduce the rate to acquire the true solution,
Theorems \ref{thm:distribution rules on CT_{2, S_2}} and \ref{thm:distribution rules on CT_{3, S_3}}
are used again here.
In the test using actual powder diffraction patterns in Section \ref{Results}, 
it is demonstrated that the improvement makes the enumeration speed 32--250 times faster.
As a result, the enumeration process is executed in a few minutes at most.

The novel algorithm for lattices of rank $N=3$ is implemented in the powder auto-indexing software Conograph.
In section \ref{Computational results of Conograph}, we introduce the default parameters and results of Conograph.
The default parameters are selected so that less-experienced users of the software can obtain good results without modifying them. 
We prepare 30 sets of test data, including difficult cases such as samples from Structure Determination by Powder Diffractometry Round Robin-2 (SDPDRR-2).
Results for rather difficult cases are explained in Examples \ref{exam:Non-unique solutions}--\ref{exam:bad quality}.
The total time for powder auto-indexing did not exceed several minutes.
The Conograph software is scheduled to be distributed in the near future from http://sourceforge.jp/projects/conograph/.

\section*{Notation and symbols}
\label{Notations and symbols}

The notation and symbols used in this paper are summarized in this section.
The inner product of the Euclidean space $\RealField^N$ is denoted by $u \cdot v$,
and the Euclidean norm $u \cdot u$ is denoted by $|u|^2$.  
The standard basis $\tr{(0, \ldots, 0, \overset{i}{\check{1}}, 0, \ldots, 0)}$ 
of $\IntegerRing^N$ is denoted by $\mathbf{e}_i$ ($1 \leq i \leq N)$.

A \textit{lattice} $L$ of rank $N$ is 
a discrete and cocompact subgroup of $\RealField^N$.
For any lattice $L$, there are linearly independent vectors $v_1, \ldots, v_N \in \RealField^N$ over $\RealField$ such that 
$v_1, \ldots, v_N$ generate $L$ as a $\IntegerRing$-module.
In this case, $v_1, \ldots, v_N$ are called a \textit{basis} of $L$,
and the matrix $(v_i \cdot v_j)_{1 \leq i, j \leq N}$ is called a \textit{Gram matrix} of $L$.
The \textit{reciprocal lattice} $L^*$ of $L$ is defined as $L^* := \{ l^* \in \RealField^N : l \cdot l^* \in \IntegerRing \text{ for all } l \in L \}$.

If $\{ v_1, \ldots, v_i \} \subset L$ ($1 \leq i < N$) is extended to a basis of $L$, it is called \textit{a primitive set} of $L$.  
In particular, $v \in L$ is a \textit{primitive vector} of $L$ if and only if $\{ v \}$ is a primitive set of $L$.
$P_n(L)$ is the set consisting of all the primitive sets of $L$ of cardinality $n$.

A symmetric matrix $(s_{ij})_{1 \leq i, j \leq N}$
is always identified with a \textit{quadratic form} $Q(x_1, \ldots, x_N) := \sum_{i=1}^N \sum_{j=1}^N s_{ij}x_i x_j$.
The linear space consisting of $N \times N$ symmetric matrices with real entries is denoted by ${\mathcal S}^N$.
${\mathcal S}^N_{\succ 0}$ (resp. ${\mathcal S}^N_{\succeq 0}$) is the subset of ${\mathcal S}^N$ consisting of all the positive-definite (resp. semidefinite) symmetric matrices.
$S_1, S_2 \in {\mathcal S}^N$ are \textit{equivalent} 
if and only if there exists $g \in GL_N(\IntegerRing)$ such that $S_2 = g S_1 \tr{g}$.

For any $S \in {\mathcal S}^N$,
elements of $\Lambda_S$
are called \textit{representations} of $S$ over $\IntegerRing$.
\begin{eqnarray}
	\Lambda_S &:=& \{ \tr{v} S v : 0 \neq v \in \IntegerRing^N \}.
\end{eqnarray}

In crystallography, a crystal lattice in three-dimensional Euclidean space
is represented by a set of lattice parameters $a$, $b$, $c$, $\alpha$, $\beta$, and $\gamma$ as in Figure \ref{Unitcell and lattice parameters}.
\begin{figure}
\begin{center}
\scalebox{1.0}{\includegraphics{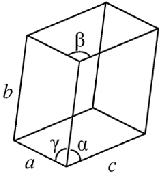}}
\end{center}
\caption{Lattice parameters and the unit cell.}
\label{Unitcell and lattice parameters}
\end{figure}
This parameterization is transformed into a $3 \times 3$ matrix $S := (s_{ij})_{1 \leq i, j \leq 3}$ as follows,
and $S$ is the Gram matrix of the \textit{Bravais lattice} of the crystal.
\begin{eqnarray}\label{eq:3-by-3 Gram matrix}
	s_{11} = a^2, & s_{22} = b^2, & s_{33} = c^2, \\
	s_{12} = a b\ cos \gamma,\ & s_{13} = a c\ cos \beta,\ & s_{23} = b c\ cos \alpha. \nonumber
\end{eqnarray}

In general, for any lattice $L \subset \RealField^N$, its automorphism group is defined by $\{ g \in GL(3, \IntegerRing) : g S \tr{g} = S \}$,
and $L$ is categorized into its Bravais type by the conjugacy class of the group in $GL(3, \IntegerRing)$. 
In $N = 3$, it is known that there exist 14 Bravais types.
In crystallography, selection of the Bravais lattice $L_2 \subset L$ and its basis $l_1, l_2, l_3$  
is standardized according to the Bravais type of $L$.  $S$ in (\ref{eq:3-by-3 Gram matrix}) is the Gram matrix defined for the basis of $L_2$.
When $L$ belongs the category called a primitive centring,
$L = L_2$, and $l_1, l_2, l_3$ are chosen by the method called the Niggli reduction, which is very similar with the Minkowski reduction in $N = 3$ (see \cite{Hahn83}).
When $L$ belongs to the category called a face-centered (resp. body-centered) lattice,
there exist $l_1, l_2, l_3 \in L$ such that
$L$ is generated by $\frac{l_i + l_j}{2}$ ($1 \leq i < j \leq 3$) (resp. $\frac{l_1 + l_2 + l_3}{2} - l_i$ ($1 \leq i\leq 3$)),
and $l_i \cdot l_j = 0$ ($1 \leq i < j \leq 3$) holds.
Regardless of the choice of $l_1, l_2, l_3$, their Gram matrix $S$ and $L_2$ generated by these $l_1, l_2, l_3$ are determined uniquely.
With regard to the Bravais lattice, the explanation above is sufficient in order to understand our following discussions.

In the context of powder structure analysis, representations of $S^{-1}$ over $\IntegerRing$ are called \textit{$q$-values} of diffraction peaks.

\section{Outline of the powder auto-indexing problem}
\label{Powder indexing problem}

A function $\wp$ on $\RealField^N$ is said to be \textit{periodic} 
if $L := \{ l \in \RealField^N : \wp(x+l) = \wp(x) \text{ for any } x \in \RealField^N \}$ is a lattice.
We call $L$ the \textit{period lattice} of $\wp$.
For example, an electron (or nucleus) density in a crystal (\CF the left figure in Figure \ref{Powder diffraction pattern}) 
is a periodic function of $N = 3$.

The following $\wp$ provides its standard model.
\begin{eqnarray}
	\wp(x) &=& \sum_{i = 1}^m \sum_{k=1}^{d_i} \sum_{l \in L} p_{i}(x - x_{ik} - l), \label{eq:distribution in crystal}
\end{eqnarray}
where
\begin{eqnarray*}
	m & : & \text{number of different elements in a crystal}, \\
	d_i & : & \text{number of the $i$th element in the primitive cell $\RealField^3 / L$}, \\
	p_{i}(x) & : & \text{rapidly decreasing function on $\RealField^3$ that represents the electron distribution of respective atoms.}
\end{eqnarray*}

According to diffraction theory, if a crystal has an electron density $\wp$,
the diffraction image of its single-crystal sample equals $c f_{single}(x^*; \wp)$ for some $c > 0$, where $f_{single}(x^*; \wp)$ is 
a sum of delta functions given by
\begin{eqnarray}
	f_{single}(x^*; \wp) &:=& \sum_{l^* \in L^*} |\hat{\wp}(l^*)|^2 \delta(x^* - l^*), \\
			 \hat{\wp}(l^*) &:=& \int_{\RealField^N / L} \wp(x) e^{-2 \pi \sqrt{-1} x \cdot l^*} dx.
\end{eqnarray}

A powder sample is an ensemble of a very large number of randomly oriented crystallites.
As a result, its diffraction image is proportional to the integration of $f_{single}(x^*; \wp)$ on a sphere of radius $\sqrt{q}$:
\begin{eqnarray}\label{eq:powder diffraction pattern}
	f_{powder}(q; \wp)
	&:=&
		\int_{|x^*|^2 = q} f_{single}(x^*; \wp) d x^*
	=
		2 \sqrt{q} \sum_{l^* \in L^*} |\hat{\wp}(l^*)|^2 \delta(q - |l^*|^2).
\end{eqnarray}
The $\hat{\wp}(l^*)$ is called a \textit{structure factor} in crystallography.

The right figure in Figure \ref{Powder diffraction pattern} presents an actual powder diffraction pattern.  It is obtained by replacing every delta function in (\ref{eq:powder diffraction pattern}) with some kind of peak-shape model function $g$ that is close to a Gaussian distribution and satisfies $\int_{\RealField} g(q) dq = 1$.

\begin{figure}[htbp]
\begin{minipage}{\textwidth}
\begin{center}
\scalebox{0.6}{\includegraphics{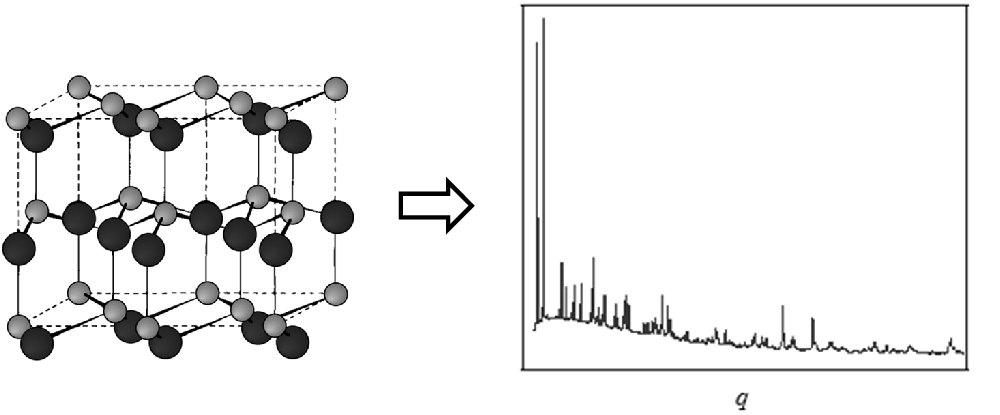}}
\end{center}
\end{minipage}
\caption{Powder diffraction pattern.}
\label{Powder diffraction pattern}
\end{figure}

Ab-initio powder crystal structure determination
retrieves the electron density $\wp$ from the powder diffraction pattern
under the assumption that the following additional information is available.  
	\begin{itemize}
		\item chemical formula (\IE the ratio $[d_1:\cdots:d_m]$),
		\item density of a single crystal,
		\item rapidly decreasing functions $p_{i}$ ($1 \leq i \leq m$).
	\end{itemize}

Powder auto-indexing is the initial stage of ab-initio powder crystal structure determination, and aims 
to find the period lattice $L$ of $\wp$.
As the positions of the delta functions,
elements of $\Lambda_\wp$ are extracted from a powder diffraction pattern $f_{powder}(q; \wp)$.
\begin{eqnarray}
	\Lambda_{\wp} &:=& \{ \abs{l^*}^2 : l^* \in L^*,\ F_\wp(\abs{l^*}^2) \neq 0  \}. \label{item:peak-positions}
\end{eqnarray}

$L$ is normally determined from elements of $\Lambda_{\wp}$ in powder auto-indexing.
After $L$ is obtained,
using the coefficients $F_\wp(q)$, powder crystal structure determination is carried out.
\begin{eqnarray}
	F_\wp(q) &:=& \displaystyle \sum_{l^* \in L^*, |l^*|^2=q} \abs{ \hat{ \wp}(l^*) }^2. \label{item:intensities} 
\end{eqnarray}

\section{Formulation of the powder auto-indexing problem}
\label{Precise formulation of powder auto-indexing problem and our strategy}

In this section, we formulate the problem explicitly.
In powder auto-indexing, extracted $\Lambda^{obs}$ is different from the set $\Lambda^{\wp}$ of true $\abs{l^*}^2$ ($l^* \in L^*$)
owing to observational problems.
Consequently, the Gram matrix $S$ of the period lattice $L$ of $\wp$ must be retrieved from $\Lambda^{obs}$
under the following assumptions.
\begin{enumerate}[(\text{$A$}1)]
	\item \label{item:observed range} 
		The observed range of a powder diffraction pattern is contained in a finite interval $[q_{min}, q_{max}] \subset (0, \infty)$.
		Consequently, only information about $\Lambda_\wp \cap [q_{min}, q_{max}]$ is available.

	\item \label{item:observation error} 
		Every $q^{obs} \in \Lambda^{obs}$ has some observation error.
		It may be assumed that the threshold ${\rm Err}[q^{obs}]$ on the error $\abs{ q^{obs} - q^{cal} }$ is given.
		(A method to compute ${\rm Err}[q^{obs}]$ is introduced in \cite{Tomiyasu2012}.)

	\item \label{item:error}
		Owing to probabilistic mistakes and errors in acquisition of the peak-positions (\IE peak-search),
		$\Lambda^{obs}$ differs from the true
		$\Lambda_\wp \cap [q_{min}, q_{max}]$.
		There exist small $\epsilon_1 > 0 $ and $\epsilon_2 > 0$ satisfying the following conditions: 
	\begin{enumerate}[(i)]
		\item 
			For arbitrarily fixed $q \in \Lambda_\wp \cap [q_{min}, q_{max}]$, the following occurs with probability $\epsilon_1$:
			\begin{eqnarray}
				q \notin \bigcup_{q^{obs} \in \Lambda^{obs}} [q^{obs} - {\rm Err}[q^{obs}], q^{obs} + {\rm Err}[q^{obs}]]. 
			\end{eqnarray}
		\item 
			For arbitrarily fixed $q^{obs} \in \Lambda^{obs}$, the following occurs with probability $\epsilon_2$:
			\begin{eqnarray}
				\Lambda_\wp \cap [q^{obs} - {\rm Err}[q^{obs}], q^{obs} + {\rm Err}[q^{obs}]] = \emptyset. 
			\end{eqnarray}
	\end{enumerate}

	\item \label{item:extinction rules}
	When the distribution $\wp$ is constrained by group symmetry,
	infinitely many $\hat{\wp}(l^*)$ and $F_\wp(q)$ become zero
	owing to \textit{systematic absences} deterministically (Section \ref{Crystallographic group and extinction rules}).
\end{enumerate}

As assumed in (\ref{item:error}),
$\Lambda_{obs}$ extracted from a powder diffraction pattern
has missing or false elements. These are caused by observational problems
including background noise or false peaks due to sample impurity. 

With regard to (\ref{item:extinction rules}), sometimes $F_\wp(q) = 0$ holds due to a special arrangement of atom positions $x_{ik}$, rather than systematic absences (\CF \cite{Engel84}, \cite{Templeton56}).
However, the probability is zero 
if every $x_{ik}$ is distributed uniformly in $\RealField^N / L$. 
(The only known exceptions are systematic absences.)

For the special arrangement with zero probability,
we replace $\Lambda_{\wp}$ by $\Lambda_{ext}(\wp)$ and consider ($\tilde{A}$\ref{item:error2}) instead of ($A$\ref{item:error}):
	\begin{eqnarray}\label{eq:definition of Lambda_{ext}(wp)}
		\Lambda_{ext}(\wp) := \Lambda_{\wp} \cup \left\{ \abs{l^*}^2 : 
								\begin{matrix}
								l^* \in L^*, F_\wp(q) = 0 \text{ owing to reasons} \\ \text{other than systematic absences}
								\end{matrix}
							\right\}. \hspace{10mm}
	\end{eqnarray}

\begin{enumerate}[($\tilde{A}$1)]
	\setcounter{enumi}{2}
	\item \label{item:error2}
	Assume that
	\begin{enumerate}[(i)]
		\item \label{item:q in Lambda_{ext}(wp) setminus Lambda^{obs}}
			For any $q \in \Lambda_{ext}(\wp) \cap [q_{min}, q_{max}]$,
			$q \notin \bigcup_{q^{obs} \in \Lambda^{obs}} [q^{obs} - {\rm Err}[q^{obs}], q^{obs} + {\rm Err}[q^{obs}]]$ 
			occurs with probability $\epsilon_1$.
		\item \label{item:q in Lambda^{obs} setminus Lambda_{ext}(wp)}
			For any $q^{obs} \in \Lambda^{obs}$, $\Lambda_{ext}(\wp) \cap [q^{obs} - {\rm Err}[q^{obs}], q^{obs} + {\rm Err}[q^{obs}]] = \emptyset$ occurs with probability $\epsilon_2$. 
	\end{enumerate}

\end{enumerate}


Under the conditions ($A$\ref{item:observed range})--($A$\ref{item:extinction rules}),
infinitely many solutions may exist in some cases 
(for example, consider the case of very small $[q_{min}, q_{max}]$).
Hence, additional assumptions are necessary in order to guarantee a finite number of solutions.
\begin{enumerate}[(\text{$A$}1)]
	\setcounter{enumi}{4}
	\item \label{item:size of cells}
		Owing to repulsive force between atoms,
		we may assume $\min \{ |l|^2 : 0 \neq l \in L \} \geq d^2$
		for a positive constant $d \approx 2 \AA$.
		Then, by the inequalities on successive minima of $L$ and its reciprocal lattice $L^*$ proved by Lagarias \textit{et al.\ }\cite{Lagarias90}, 
		in $N = 2, 3$, the maximum diagonal entry $D_N$ of a Minkowski-reduced(defined in Appendix \ref{Appendix:lattice determination algorithm from complete set of the lengths of lattice vectors}) Gram matrix $S$ of $L^*$ satisfies
		\begin{eqnarray}\label{eq:formula by Lagarias}
			D_N \leq \frac{N+3}{4 d^2} \max \{ \gamma_i : 1 \leq i \leq N \},
		\end{eqnarray}
		where $\gamma_i$ is the Hermite constant:
		\begin{eqnarray}
			\gamma_i := \sup \left\{ \min \{ \tr{v} S v : 0 \neq v \in \IntegerRing^N \}: S \in {\mathcal S}^i_{\succ 0},\ {\rm det} S = 1 \right\}.
		\end{eqnarray}
		In particular, $D_2 \leq \frac{5}{3} d^{-2}$ and $D_3 \leq 3 \cdot 2^{-1/3} d^{-2}$ follow from $\gamma_1 = 1$, $\gamma_2^2 = \frac{4}{3}$, $\gamma_3^3 = 2$.  (In $N = 2$, the estimation is improved up to $D_2 \leq \frac{4}{3} d^{-2}$ easily.)
		
	\item \label{item:q_{max} - q_{min}} The length of the interval $\sqrt{q_{max}} - \sqrt{q_{min}}$ is sufficiently greater than $\sqrt{D_N}$
			that there exists $l_1^*$, $l_2^*$, $l_3^*$ is a basis of $L^*$ such that $\Lambda^{obs}$ includes 
			$\abs{l_1^* \pm l_2^*}^2$, $\abs{l_1^* \pm l_3^*}^2$, $\abs{\pm l_1^*+l_2^*+l_3^*}^2$,
			and at least one of the following for both $i= 2, 3$:  
			\begin{enumerate}[(i)]
				\item \label{item:abs{l_1^*}^2 and abs{l_i^*}^2} $\abs{l_1^*}^2$, $\abs{l_i^*}^2$,
				\item $\abs{l_1^*}^2, \abs{l_1^* - 2 l_i^*}^2$,
				\item $\abs{l_1^*}^2, \abs{l_1^* + 2 l_i^*}^2$,
				\item $\abs{l_i^*}^2, \abs{2l_1^* - l_i^*}^2$.
				\item \label{item:abs{l_i^*}^2 and abs{2l_1^* + l_i^*}^2} $\abs{l_i^*}^2, \abs{2l_1^* + l_i^*}^2$.
			\end{enumerate}
\end{enumerate}

In ($A$\ref{item:size of cells}), 
$2 \AA$ is selected as the minimum distance between the two closest latttice points of $L$ satisfied by any existing crystals.
($A$\ref{item:q_{max} - q_{min}})
looks rather artificial.
This assumption is necessary for our algorithm in Table \ref{Enumeration of the two dimensional lattices enumTwoDimLattices}.
By Theorems \ref{thm:distribution rules on CT_{2, S_2}} and \ref{thm:distribution rules on CT_{3, S_3}}, 
($A$\ref{item:q_{max} - q_{min}}) holds except for events with zero probability
if a sufficiently large $q_{max}$ is chosen, regardless of the type of systematic absences.
Under assumptions ($A$\ref{item:size of cells}) and ($A$\ref{item:q_{max} - q_{min}}),
the number of solutions is always finite, 
because $\Lambda^{obs} \subset [q_{min}, q_{max}]$
contains only finite elements 
and the Gram matrix of $L^*$ is computed from a combination of elements of 
$\Lambda^{obs}$ due to assumption ($A$\ref{item:q_{max} - q_{min}}).

Here, it is still non-trivial how to select $q_{max}$.
At least, it is clear $q_{max} \geq D_3$ is required;
otherwise $\Lambda_{ext}(\wp) \cap [q_{min}, q_{max}]$ contain only $\abs{l^*}^2$ of $l^* \in L_2^*$ in some cases,
where $L_2^* \subset L^*$ is a sublattice of rank less than 3.
On the other hand, as seen in (\ref{item:Number of used q-values}) in Section \ref{Computational results of Conograph},
too many $q$-values are frequently extracted from 
the interval $[q_{min}, D_3]$ when we set $d = 2 \AA$ and $D_3 := 3 \cdot 2^{-1/3} d^{-2}$, 
nevertheless powder auto-indexing is very frequently successful even with a smaller interval.
Since the time of our enumeration algorithm is roughly proportional to the fourth power of the number of elements of $\Lambda^{obs}$ (\CF Section \ref{Speed-up using topographs}),
the time can be significantly decreased by minimizing the range $[q_{min}, q_{max}]$.
Considering the current accuracy of diffractometers and the power of personal computers,
$q_{max}$ should be chosen empirically to some degree, in addition to the theoretical estimation above.
See (\ref{item:Number of used q-values}) in Section \ref{Parameters for calculation conditions} for a more detailed approach 
to this issue.





\section{Summary of crystallographic groups and systematic absences}
\label{Crystallographic group and extinction rules}

We now give some definitions for crystallographic groups
and systematic absences.
In the following, 
we represent the elements $x \in \RealField^N / L$ as a row vector and 
 $l^* \in L^*$ as a column vector.
Furthermore, any group action on $\RealField^N / L$ (resp. $L^*$) is represented as a right action (resp. left action).

Any congruent transformation of the Euclidean space $\RealField^N$ is represented as a composition of the orthogonal group $O(N)$ and 
a translation; if $\sigma$ is a congruent transformation of $\RealField^N$, 
there exist $\tau \in O(N)$ and $\nu \in \RealField^N$ such that:
\begin{eqnarray}
	x^\sigma = x^\tau + \nu \text{ for any } x \in \RealField^N.
\end{eqnarray}
Such a $\sigma$ is denoted by $\{ \tau | \nu \}$.
The group consisting of all congruent transformations of $\RealField^N$
is the semidirect group $O(N) \ltimes \RealField^N$.  
By expanding the composition $(x^{ \{ \tau_1 | \nu_1 \} })^{ \{ \tau_2 | \nu_2 \} }$, it is seen that the group multiplication is given by:
\begin{eqnarray}
	\{ \tau_1 | \nu_1 \} \cdot \{ \tau_2 | \nu_2 \} = \{ \tau_1 \tau_2 | \nu_1^{\tau_2} + \nu_2 \}.
\end{eqnarray}

\begin{definition}
A \textit{crystallographic group} is a discrete and cocompact subgroup of $O(N) \ltimes \RealField^N$.
\end{definition}
A crystallographic group is also called a \textit{wallpaper group} in $N = 2$, and a \textit{space group} in $N = 3$.

For a crystallographic group $G$, 
two groups $R_G$ and $L$ are defined by: 
\begin{eqnarray}
		R_G &:=& \{ \tau \in O(N) : \{ \tau | \nu \} \text{ for some } \nu \in \RealField^N \}, \\
		L &:=& \{ \nu \in \RealField^N : \{ 1_N | \nu \} \in G \},
\end{eqnarray}
where $1_N$ is the identity of $O(N)$.
$L$ is a lattice and $R_G$ is a finite subgroup of $O(N)$ consisting of $\tau$ 
that maps any elements of $L$ to $L$.
$R_G$ is called a \textit{point group} of $G$.
$G$ is a group extension of $L$ by $R_G$.
From the definition of $L$, 
for any $\{ \tau | \nu_\tau \} \in G$,
the class $\nu_\tau + L \in \RealField^N / L$ is uniquely determined.
Furthermore, the map $R_G \longrightarrow \RealField^N / L$ : $\tau \mapsto \nu_\tau + L$ is a $1$-cocycle, \IE it satisfies
\begin{eqnarray}
	\nu_{\sigma \tau} \equiv \nu_{\sigma}^\tau + \nu_{\tau} \text{ mod } L.
\end{eqnarray}

We now proceed to the definition of systematic absences.
Let $G$ be a crystallographic group with the point group $R_G$ and the translation group $L$,
and consider the following periodic function $\wp$:
\begin{eqnarray}\label{eq:representation of wp}
		\wp(x) = \sum_{i=1}^m \sum_{\sigma \in G} p_{i}( x - x_{i}^\sigma ).
\end{eqnarray}
Note that the density model function $\wp$ in (\ref{eq:distribution in crystal}),
is represented as in (\ref{eq:representation of wp}) for some space group $G$.

Let $L^2(\RealField^N)$ be the $L^2$-space, \IE the set of all measurable functions $f$ on $\RealField^N$
with a finite $L^2$-norm $|| f ||_2 := (\int_{\RealField^N} \abs{f(x)}^2 dx)^{1/2} < \infty$. 
In order to compute the same list as \cite{Hahn83}, let us assume that

\vspace{3mm}
	{\bf (Isotropy condition)} $p_{i}$ belongs to $L^2(\RealField^N)^{R_G}$,
		\IE $p_{i}(x^\tau) = p_{i}(x)$ holds for any $\tau \in R_G$.
\vspace{3mm}

Then, $\wp(x^\sigma) = \wp(x)$ holds for any $\sigma \in G$,
and $\wp$ has the Fourier coefficient 
\begin{eqnarray}\label{eq:formula of hat{wp}(l^*)}
	\hat{\wp}(l^*) &=& \sum_{i=1}^m \sum_{\sigma \in G} \int_{\RealField^N / L} p_{i}(x - x_{i}^\sigma) e^{-2 \pi \sqrt{-1} x \cdot l^*} d x \nonumber \\
	&=& \sum_{i=1}^m \hat{p}_{i}(l^*) \sum_{\sigma \in G / L} e^{-2 \pi \sqrt{-1} x_{i}^\sigma \cdot l^*},
\end{eqnarray}
where $\hat{p}_{i}(l^*) := \int_{\RealField^N} p_{i}(x) e^{-2 \pi \sqrt{-1} x \cdot l^*} dx$.

Hence, the probability of $\hat{\wp}(l^*) = 0$ depends on the size of $W_{G, l^*} \subset \RealField^{N}/L$.
\begin{eqnarray}
	W_{G, l^*} := \left\{ x \in \RealField^{N}/L : \textstyle \sum_{\sigma \in G / L} e^{-2 \pi \sqrt{-1} x^\sigma \cdot l^* } = 0 \right\}.
\end{eqnarray}

\begin{definition}\label{def:definition of systematic absences}
For any finite subgroup $H \subset G$,
let $(\RealField^N)^H$ be the subset of $\RealField^N$ consisting of all the fixed points of $H$.
Under this notation, $\Gamma_{ext}(G)$ and $\Gamma_{ext}(G, H)$ are defined by
\begin{eqnarray}
	\Gamma_{ext}(G) &:=& \{ l^* \in L^* : W_{G, l^*} = \RealField^{N}/L \}, \\
	\Gamma_{ext}(G, H) &:=& \{ l^* \in L^* : (\RealField^N)^H / ((\RealField^N)^H \cap L) \subset W_{G, l^*} \}. \hspace{10mm} \label{eq:Gamma_{ext}(G, H, U)}
\end{eqnarray}
According to the terminology of crystallography, we say that 
$l^* \in L^*$ corresponds to a \textit{systematic absence at general positions (resp. special positions)}
if and only if $l^*$ belongs to $\Gamma_{ext}(G)$ (resp. $\Gamma_{ext}(G, H)$).
We shall call $(G, H)$ a \text{ type of systematic absences}.
\end{definition}


In the following, we shall focus on $\Gamma_{ext}(G, H)$,
since we have $\Gamma_{ext}(G) = \Gamma_{ext}(G, \{ id \})$.
From the definition, it is clear that $-l^*$ and $\tau l^*$ belong to $\Gamma_{ext}(G, H)$ for any $\tau \in R_G$ if and only if 
$l^*$ does.

For $\wp$ in (\ref{eq:representation of wp}) satisfying the isotropy condition,
it is not difficult to confirm that 
the following equivalence condition holds when $H_{i} \subset G$ is the stabilizer subgroup of $x_{i} \in \RealField^N$: 
\begin{eqnarray}\label{eq:bigcap_{i=1}^m Gamma_{ext}(G, H_{i}, x_{i} + V^{Sigma_{R_{H_i}}})}
	& & \hspace{-10mm}
	\hat{\wp}(l^*) = 0 \text{ holds constantly when} (x_{1}, \ldots, x_{m}, p_{1}, \ldots, p_{m}) \nonumber \\
	& & \hspace{-10mm}
	\text{ is perturbed in } (\RealField^N)^{H_{1}} \times \cdots \times (\RealField^N)^{H_{m}} \times ( L^2(\RealField^N)^{R_G} )^m \nonumber \\
	& \Longleftrightarrow & l^* \in \bigcap_{i=1}^m \Gamma_{ext}(G, H_{i}).
\end{eqnarray}

As proved by Bieberbach \cite{Bieberbach11}, \cite{Bieberbach12},
a homomorphism $\varphi : G_1 \longrightarrow G_2$ is an isomorphism between two crystallographic groups $G_1$ and $G_2$
			 if and only if
			there is an affine map $\alpha$ of $\RealField^N$ such that $\varphi(g) = \alpha g \alpha^{-1}$.
For such $G_1, G_2$, 
$\Gamma_{ext}(G_1, H) = \Gamma_{ext}(G_2, \varphi(H))$ clearly holds.
In general, we also have $\Gamma_{ext}(G, H) = \Gamma_{ext}(G, \sigma H \sigma^{-1})$ for any $\sigma \in G$.

\begin{proposition}\label{prop:basic property of extinction rules1}
		All types of systematic absences are classified by pairs $(G, H)$,
		where $G$, $H$ range respectively in 
		\begin{enumerate}[(a)]
			\item \label{item:isomorphism class of crystallographic group} isomorphism classes of a crystallographic group $G$,

			\item \label{item:subgroup H} conjugacy classes of finite subgroups $H \subset G$ in $G$.
			We also assume $H = \{ \sigma \in G : x^\sigma = x \}$ for some $x \in \RealField^N$.

		\end{enumerate}
		As a result, there are only finitely many types of systematic absences for each $N > 0$.
\end{proposition}

\begin{proof}
As proved by Bieberbach \cite{Bieberbach11}, \cite{Bieberbach12},
there are only finitely many isomorphism classes of crystallographic groups in each $N$.
Hence we shall only show that the number of the conjugacy classes is finite. 
For any finite subgroup $H \subset G$,
the natural epimorphism $\varphi : G \twoheadrightarrow G/L$ induces an injective map $H \hookrightarrow G/L$. 
Thus it is sufficient for the proof if 
we can prove that for any fixed subgroup $\tilde{H} \subset G / L$,
the set $S(\tilde{H}) := \{ H \subset G: \varphi(H) = \tilde{H}, H = \{ \sigma \in G : x^\sigma = x \} (\exists x \in \RealField^N) \}$
contains finitely many conjugacy classes.
Let $(\RealField^N / L)^{\tilde{H}}$ be the subset of $\RealField^N / L$ consisting of all the fixed points of $\tilde{H}$,
and $\pi$ be the natural onto map $\RealField^N \longrightarrow \RealField^N / L$.
For any $\bar{x} \in (\RealField^N / L)^{\tilde{H}}$ and $x \in \RealField^N$ with $\pi(x) = \bar{x}$,
let $H_{\bar{x}} \subset G$ be the subgroup consisting of all $\sigma \in G$ with $x^\sigma = x$.
The conjugacy class of $H_{\bar{x}}$ in $G$ clearly depends only on $\bar{x}$.
It is also straightforward to check that any elements of $S(\tilde{H})$ can be represented as $H_{\bar{x}}$ for some $\bar{x} \in (\RealField^N / L)^{\tilde{H}}$.
Furthermore, if $\bar{x}$ and $\bar{y}$ are contained in the same connected component of $(\RealField^N / L)^{\tilde{H}}$,
there is a path $w : [0, 1] \rightarrow (\RealField^N / L)^{\tilde{H}}$ such that $w(0) = \bar{x}$ and $w(1) = \bar{y}$,
and therefore $H_{\bar{x}} = H_{\bar{y}}$ must hold since $\varphi(H_{w(t)}) = \tilde{H}$ holds for any $t \in [0, 1]$ from the assumption.
Since $(\RealField^N / L)^{\tilde{H}}$ is compact, it contains only finitely many connected components.
As a result, $S(\tilde{H})$ contains only finitely many conjugacy classes.
\end{proof}

$\Gamma_{ext}(G, H)$ is computed using 
the following proposition.

\begin{proposition}\label{prop:basic property of extinction rules2}
			For fixed $l^* \in L^*$, the equivalence relation among the right cosets $R_H \backslash R_G$ is defined by: 
			\begin{eqnarray}
				R_H \tau_1 {\stackrel{l^*}{\sim}} R_H \tau_2
					\underset{def}{\Longleftrightarrow} \sum_{\tau \in R_H \tau_1} \tau l^* = \sum_{\tau \in R_H \tau_2} \tau l^*.
			\end{eqnarray}
			Then, for any $x \in (\RealField^N)^H$, 
			$l^* \in L^*$ belongs to $\Gamma_{ext}(G, H)$ if and only if the following holds:
			\begin{eqnarray}\label{eq:equivalent condition of extinction rules}
			\sum_{ R_H \tau_2 {\stackrel{l^*}{\sim}} R_H \tau_1 } e^{2 \pi \sqrt{-1} x^{ \{ \tau_2 | \nu_{\tau_2} \} } \cdot l^* } = 0 \text{ for every } R_H \tau_1 \in R_H \backslash R.		
			\end{eqnarray}
\end{proposition}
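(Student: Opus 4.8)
The plan is to unwind the definition of $\Gamma_{ext}(G, H, x + V^{\sum_{R_H}})$ via the membership condition $x + V^{\sum_{R_H}} \subset W_{G, l^*}$ and reorganize the defining sum $\sum_{\sigma \in G/L} e^{-2\pi\sqrt{-1}\, y^\sigma \cdot l^*}$ by grouping the cosets $\sigma L \in G/L$ according to their image in the point group. First I would note that $G/L \cong R_G$ as a set (via $\{\tau|\nu_\tau\} L \mapsto \tau$), so the sum over $\sigma \in G/L$ can be written as a sum over representatives $\{\tau|\nu_\tau\}$ with $\tau$ ranging over $R_G$; then I would further group these according to the right cosets $R_H \tau_1 \in R_H \backslash R_G$, writing $\sum_{\tau \in R_G} = \sum_{R_H\tau_1} \sum_{\tau \in R_H \tau_1}$. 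The point is that for $y = x + w$ with $x \in V^H$ fixed and $w \in V^{\sum_{R_H}}$, the inner sum over a fixed coset $R_H\tau_1$ is controlled by how $\sum_{\tau \in R_H\tau_1} \tau l^*$ pairs with $w$ — and this is exactly where the equivalence relation $\stackrel{l^*}{\sim}$ enters.

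The key computation is to evaluate $y^{\{\tau|\nu_\tau\}} \cdot l^*$ for $y = x + w$. Since $x \in V^H$ and $H$ contains the translations $L$, and since $w$ ranges over $V^{\sum_{R_H}}$ which by the discussion before the proposition is the set of sums $\sum_{\tau' \in R_H} v^{\tau'}$, I would show that for $\tau, \tau' \in R_H\tau_1$ in the same coset, the terms $e^{-2\pi\sqrt{-1}\, x^{\{\tau|\nu_\tau\}}\cdot l^*}$ and the $w$-dependent phase factors behave compatibly: the $w$-contribution to the phase only depends on the coset $R_H\tau_1$ through $\sum_{\tau\in R_H\tau_1}\tau l^*$, and two cosets that are $\stackrel{l^*}{\sim}$-equivalent contribute phases that must be combined. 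Concretely, writing $w = \sum_{\tau'\in R_H} v^{\tau'}$, one has $w^{\{\tau|\nu_\tau\}}\cdot l^* = w \cdot (\tau l^*)$ up to the translation part, and $\sum_{\tau\in R_H\tau_1} w\cdot(\tau l^*) = w \cdot \big(\sum_{\tau\in R_H\tau_1}\tau l^*\big)$, which is a single linear functional of $w$. Partitioning the full sum $\sum_{\sigma\in G/L} e^{-2\pi\sqrt{-1}\,y^\sigma\cdot l^*}$ into contributions indexed by $\stackrel{l^*}{\sim}$-classes, each class contributes $\big(\sum_{\text{terms in the class}} e^{-2\pi\sqrt{-1}\,x^{\{\tau_2|\nu_{\tau_2}\}}\cdot l^*}\big)$ times a common exponential factor depending on $w$; these factors are linearly independent functions of $w$ as $w$ ranges over $V^{\sum_{R_H}}$, precisely because distinct classes give distinct values of $\sum_{\tau\in R_H\tau_i}\tau l^*$. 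Hence the total sum vanishes identically on $x + V^{\sum_{R_H}}$ if and only if each class-coefficient vanishes, which is (\ref{eq:equivalent condition of extinction rules}).

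Then I would assemble the argument: ($\Leftarrow$) if (\ref{eq:equivalent condition of extinction rules}) holds for every $R_H\tau_1$, the grouped sum is identically zero on $x + V^{\sum_{R_H}}$, so $x + V^{\sum_{R_H}} \subset W_{G,l^*}$ and $l^* \in \Gamma_{ext}$. ($\Rightarrow$) conversely, if $l^* \in \Gamma_{ext}$, the grouped sum vanishes for all $w \in V^{\sum_{R_H}}$; by the linear independence of the exponential factors attached to distinct $\stackrel{l^*}{\sim}$-classes, each coefficient must vanish, which gives (\ref{eq:equivalent condition of extinction rules}). The main obstacle I anticipate is making the linear-independence step fully rigorous: I need to check that $w \mapsto e^{-2\pi\sqrt{-1}\, w\cdot(\sum_{\tau\in R_H\tau_1}\tau l^*)}$, restricted to $w \in V^{\sum_{R_H}}$, are genuinely distinct (hence linearly independent as characters of the compact group $V^{\sum_{R_H}}$) for inequivalent classes — this requires that $\sum_{\tau\in R_H\tau_1}\tau l^*$ lies in the dual of $V^{\sum_{R_H}}$ and that $\stackrel{l^*}{\sim}$-inequivalence really does separate these functionals modulo the annihilator of $V^{\sum_{R_H}}$. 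A secondary, more bookkeeping-type obstacle is carefully tracking the translation parts $\nu_\tau$ and the $1$-cocycle relation so that the phase $e^{-2\pi\sqrt{-1}\,x^{\{\tau|\nu_\tau\}}\cdot l^*}$ is well-defined independent of the coset representative and so that the $x$-dependent and $w$-dependent phases factor as claimed; this uses $x \in V^H$ and that $H$ contains $L$, so that the relevant ambiguities land in $\IntegerRing$ after pairing with $l^* \in L^*$.
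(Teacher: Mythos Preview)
Your approach is essentially the same as the paper's: write the vanishing condition as $\sum_{\tau\in R_G} e^{2\pi\sqrt{-1}((x+\delta x)^\tau+\nu_\tau)\cdot l^*}=0$ for all $\delta x\in V^{\sum_{R_H}}$, observe that the $\delta x$-dependent phase $\delta x^{\tau}\cdot l^*$ is constant precisely on $\stackrel{l^*}{\sim}$-classes, factor accordingly, and invoke linear independence of the distinct characters. The paper resolves your ``main obstacle'' in one line by proving directly that $\delta x^{\tau_1}\cdot l^* = \delta x^{\tau_2}\cdot l^*$ for every $\delta x\in V^{\sum_{R_H}}$ if and only if $R_H\tau_1 \stackrel{l^*}{\sim} R_H\tau_2$ (using $\delta x = \sum_{h\in R_H}\tilde x^h$), which simultaneously shows that the $\delta x$-phase is well-defined on cosets and that inequivalent classes give genuinely distinct characters on $V^{\sum_{R_H}}$.
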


\begin{proof}
	We fix $x \in \RealField^N$ stabilized by $H$ arbitrarily.
	In this case, $l^* \in \Gamma_{ext}(G, H)$ holds if and only if
	\begin{eqnarray}\label{eq:extinction rule2}
		\sum_{ R_H \tau_1 \in R_H \backslash R} e^{2 \pi i ( (x + \delta x)^{\tau_1} + \nu_{\tau_1}) \cdot l^* } = 0
		\text{ for any } \delta x \in \RealField^N \text{ stabilized by } R_H.
	\end{eqnarray}
	Furthermore,
	\begin{eqnarray}\label{eq:equivalent condition}
		& & \hspace{-10mm}
		\delta x^{\tau_1} \cdot l^* = \delta x^{\tau_2} \cdot l^* \text{ for any } \delta x \in \RealField^N \text{ stabilized by } R_H \nonumber \\
		& \Longleftrightarrow & \sum_{\tau \in R_H} (\delta \tilde{x}^{\tau \tau_1} - \delta \tilde{x}^{\tau \tau_2}) \cdot l^* = 0 \text{ for any } \delta \tilde{x} \in \RealField^N \nonumber \\
		& \Longleftrightarrow & \sum_{\tau \in R_H} \delta \tilde{x} \cdot ({\tau \tau_1} l^* - {\tau \tau_2} l^*) = 0 \text{ for any } \delta \tilde{x} \in \RealField^N \nonumber \\
		& \Longleftrightarrow & \sum_{\tau \in R_H} \tau ({\tau_1}l^* - {\tau_2} l^*)  = 0
        \Longleftrightarrow R_H \tau_1 {\stackrel{l^*}{\sim}} R_H \tau_2. \hspace{10mm}
	\end{eqnarray}
	Hence, (\ref{eq:extinction rule2}) holds if and only if the following does for any $\delta x$ stabilized by $R_H$:
	\begin{eqnarray}\label{eq:extinction rule3}
			\sum_{ [\tau_1] \in (R_H \backslash R) / \stackrel{l^*}{\sim}} e^{2 \pi \sqrt{-1} \delta x^{\tau_1} \cdot l^* }
				\sum_{ R_H \tau_2 \stackrel{l^*}{\sim} R_H \tau_1 } e^{2 \pi \sqrt{-1} ( x^{\tau_2} + \nu_{\tau_2}) \cdot l^* } = 0,
	\end{eqnarray}
	which leads to the statement.
\end{proof}

\begin{corollary}\label{cor:definition of Omega}
Let $M$ be the order of $R_G$, and
${\mathcal H}_{G, H} \subset \RealField^N$ be the following union of finite linear subspaces of dimension less than $N$:
\begin{eqnarray}\label{eq:definition of {mathcal H}_{G, H}}
	{\mathcal H}_{G, H} := \bigcup_{ { R_H \tau_1, R_H \tau_2 \in R_H \backslash R_G,}\atop{ \sum_{\tau \in R_H} \tau (\tau_1 - \tau_2) \neq 0 } } \left\{ x^* \in \RealField^N : \textstyle \sum_{\tau \in R_H} \tau (\tau_1 - \tau_2) x^* = 0 \right\}.
\end{eqnarray}
There then exists $\Omega \subset L^* / M L^*$
such that $l^* \in \Gamma_{ext}(G, H) \Longleftrightarrow l^* + M L^* \in \Omega$
holds for any $l^* \in L^* \setminus {\mathcal H}_{G, H}$.
\end{corollary}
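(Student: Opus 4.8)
The plan is to read the statement off Proposition~\ref{prop:basic property of extinction rules2}, after first observing that the equivalence relation $\stackrel{l^*}{\sim}$ on $R_H\backslash R_G$ becomes independent of $l^*$ once $l^*$ leaves $\mathcal{H}_{G,H}$. By definition $R_H\tau_1\stackrel{l^*}{\sim}R_H\tau_2$ means $\bigl(\sum_{\tau\in R_H}\tau(\tau_1-\tau_2)\bigr)l^*=0$. If the matrix $\sum_{\tau\in R_H}\tau(\tau_1-\tau_2)$ vanishes, this holds for every $l^*$; if it is non-zero, its kernel is one of the hyperplanes whose union is $\mathcal{H}_{G,H}$, so the relation fails for every $l^*\in L^*\setminus\mathcal{H}_{G,H}$. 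Hence on $L^*\setminus\mathcal{H}_{G,H}$ the relation $\stackrel{l^*}{\sim}$ coincides with the single, $l^*$-independent relation $\sim_0$ given by $R_H\tau_1\sim_0 R_H\tau_2\iff\sum_{\tau\in R_H}\tau(\tau_1-\tau_2)=0$.

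Fix a representative $x\in V^H$ of the component $U=x+V^{\sum_{R_H}}$. Because $x$ is fixed by $H$, the quantity $e^{2\pi\sqrt{-1}\,x^{\{\tau_2|\nu_{\tau_2}\}}\cdot l^*}$ depends only on the coset $R_H\tau_2$, and Proposition~\ref{prop:basic property of extinction rules2} then says, for $l^*\in L^*\setminus\mathcal{H}_{G,H}$, that $l^*\in\Gamma_{ext}(G,H,U)$ if and only if $g_C(l^*):=\sum_{R_H\tau_2\in C}e^{2\pi\sqrt{-1}\,x^{\{\tau_2|\nu_{\tau_2}\}}\cdot l^*}=0$ for each of the finitely many classes $C$ of $(R_H\backslash R_G)/\!\sim_0$. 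This is one fixed finite system of equations in $l^*$. So it suffices to show that its solution set is a union of cosets of $ML^*$; one then takes $\Omega$ to be the image of that set --- equivalently, of $\Gamma_{ext}(G,H,U)\setminus\mathcal{H}_{G,H}$ --- in $L^*/ML^*$, and the claimed equivalence is immediate.

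The $ML^*$-periodicity I would obtain by producing, for each class $C$, a phase $\vartheta_C(l_0^*)$ independent of $l^*$ such that $g_C(l^*+Ml_0^*)=e^{2\pi\sqrt{-1}\vartheta_C(l_0^*)}g_C(l^*)$ for all $l^*,l_0^*\in L^*$; this makes $g_C(l^*)=0$ an $ML^*$-periodic condition. Writing $g_C(l^*+Ml_0^*)=\sum_{R_H\tau_2\in C}e^{2\pi\sqrt{-1}M\,x^{\{\tau_2|\nu_{\tau_2}\}}\cdot l_0^*}\,e^{2\pi\sqrt{-1}\,x^{\{\tau_2|\nu_{\tau_2}\}}\cdot l^*}$, such a phase exists precisely when $M\,x^{\{\tau_2|\nu_{\tau_2}\}}\cdot l_0^*\bmod\IntegerRing$ is the same for all cosets $R_H\tau_2$ lying in a given $\sim_0$-class, i.e.\ when the points $x^{\{\tau_2|\nu_{\tau_2}\}}\in V=\RealField^N/L$ attached to the cosets in a single $\sim_0$-class all coincide modulo the $M$-torsion subgroup $\tfrac{1}{M}L/L$.

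This last fact is where the real work lies. Within a single coset it is automatic (again using $x\in V^H$), so the issue is only the comparison across $\sim_0$-equivalent cosets, where it becomes a statement about the translation parts $\nu_\tau$ together with the coordinates of $x$ modulo $L$. It holds for the standard realizations of crystallographic groups used throughout the paper, in which the special-position coordinates and the $\nu_\tau$ have denominators dividing $M$ relative to $L$ --- this can be read off the symmetry-operation and Wyckoff-position lists of \cite{Hahn83}, of which the present corollary is in effect an abstract restatement (special-position systematic absences are governed by congruences on the Miller indices modulo $M$) --- but it would fail for, say, an irrationally shifted copy of $G$, which is exactly where the conventional coordinate choice becomes essential. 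Granting it, the phase factorization goes through and the corollary follows. (When some $\sim_0$-class is a singleton, the corresponding $g_C$ is a single unimodular exponential, so $\Gamma_{ext}(G,H,U)\setminus\mathcal{H}_{G,H}=\emptyset$ and one may take $\Omega=\emptyset$.)
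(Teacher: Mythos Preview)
Your reduction is correct and matches the paper's: off $\mathcal{H}_{G,H}$ the relation $\stackrel{l^*}{\sim}$ freezes to the single relation $\sim_0$, and by Proposition~\ref{prop:basic property of extinction rules2} the extinction condition becomes the vanishing of the finitely many sums $g_C(l^*)$. Your phase-factorization criterion for $ML^*$-periodicity is also the right target.

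The gap is in the last paragraph. The claim you need --- that for $R_H\tau_1\sim_0 R_H\tau_2$ the points $x^{\{\tau_i|\nu_{\tau_i}\}}\in V$ agree modulo $M^{-1}L/L$ --- is \emph{not} a matter of coordinate convention, and your remark that it ``would fail for an irrationally shifted copy of $G$'' is incorrect: the statement is invariant under conjugation in $O(N)\ltimes\RealField^N$, since $\Gamma_{ext}(G,H,U)$ is. Appealing to the tables in \cite{Hahn83} is therefore neither necessary nor, as written, a proof.

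The paper supplies the missing argument cohomologically. Translate so that $x=0$; then $\nu_\tau=0$ for $\tau\in R_H$, i.e.\ the cocycle $\tau\mapsto\nu_\tau$ restricts trivially to $R_H$. By the restriction--corestriction identity (Serre, \emph{Corps locaux}, Chap.~VII, Prop.~6), the class $[\nu_\tau]\in H^1(R_G,V)$ is killed by $[R_G:R_H]=M/m$; since $V$ is divisible this yields $\nu_\tau=y-y^\tau+\mu_\tau$ with $\mu_\tau\in (M/m)^{-1}L/L$. From $\nu_\tau=0$ on $R_H$ one gets $\tfrac{M}{m}y\in V^{R_H}$, hence $My\in V^{\sum_{R_H}}$, and therefore $y=\sum_{\sigma\in R_H}u^\sigma+\xi$ with $\xi\in M^{-1}L/L$. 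Thus
\[
\nu_\tau=\sum_{\sigma\in R_H}u^\sigma-\sum_{\sigma\in R_H}u^{\sigma\tau}+\lambda_\tau,\qquad \lambda_\tau:=\mu_\tau+\xi-\xi^\tau\in M^{-1}L/L.
\]
Now for $R_H\tau_1\sim_0 R_H\tau_2$ one has $\sum_{\sigma\in R_H}u^{\sigma\tau_1}=\sum_{\sigma\in R_H}u^{\sigma\tau_2}$ (this is exactly the definition of $\sim_0$ applied to $u$), so the first two terms contribute a \emph{common} phase across the $\sim_0$-class, and only $\sum e^{2\pi\sqrt{-1}\lambda_{\tau_2}\cdot l^*}$ remains. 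Since $M\lambda_{\tau_2}\in L$, this last sum depends only on $l^*+ML^*$. That is precisely your phase factorization, now proved intrinsically rather than read off tables.
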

\begin{proof}
If $x$ is stabilized by $H$, $\nu_\tau \equiv x - x^\tau$ mod $L$ holds for any $\tau \in H$.
Hence,
$[\nu_\tau] \in H^1(R_G, \RealField^N / L)$ is mapped to 0 by the natural map $H^1(R_G, \RealField^N / L) \longrightarrow H^1(R_H, \RealField^N / L)$. As a result, it is also mapped to 0 
by $H^1(R_G, \RealField^N / L) \stackrel{\times M/m }{\longrightarrow} H^1(R_G, \RealField^N / L)$, where $m$ is the order of $R_H$ (\CF Proposition 6 in Chap.\ VII of Serre (1968)).
Thus, for any $\tau \in R_G$, there exist $y \in \RealField^N$ and $\mu_\tau \in (M/m)^{-1} L$
such that $\nu_\tau \equiv y - y^{\tau} + \mu_\tau$ mod $L$.
Hence, $\frac{M}{m} (x - y) \equiv \frac{M}{m} (x - y)^\sigma$ holds for any $\sigma \in R_H$.
Therefore $M (x- y) \equiv \sum_{\sigma \in R_H} \frac{M}{m} (x - y)^{\sigma}$.
If $u \in \RealField^N$ with $m u = x - y$ is fixed,
we have $(x- y) - \sum_{\sigma \in R_H} u^{\sigma} \in M^{-1} L$,
hence, for some $\xi_\tau \in M^{-1} L$,  $\nu_\tau$ is represented as follows:
\begin{eqnarray}
	\nu_\tau \equiv x - x^{\tau} - \sum_{\sigma \in R_H} u^{\sigma} + \sum_{\sigma \in R_H} u^{\sigma \tau} + \xi_\tau.
\end{eqnarray}
From Lemma \ref{prop:basic property of extinction rules2}, $l^* \in L^*$
belongs to $\Gamma_{ext}(G, H)$ if and only if the following holds for any $R_H \tau_1 \in R_H \backslash R_G$:
\begin{eqnarray}
\sum_{  R_H \tau_2 {\stackrel{l^*}{\sim}} R_H \tau_1 } e^{2 \pi i (x^{\tau_2} + \nu_{\tau_2}) \cdot l^* }
&=& \sum_{ R_H \tau_2 {\stackrel{l^*}{\sim}} R_H \tau_1 } e^{ 2 \pi i ( x - \sum_{\sigma \in R_H} u^{\sigma} + \sum_{\sigma \in R_H} u^{\sigma \tau_2} + \xi_{\tau_2} ) \cdot l^* } \nonumber \\
&=& 
e^{ 2 \pi i ( x - \sum_{\sigma \in R_H} u^{\sigma} ) \cdot l^* + 2 \pi i u \cdot \sum_{\sigma \in R_H} \sigma \tau_1 l^* } 
\sum_{ R_H \tau_2 {\stackrel{l^*}{\sim}} R_H \tau_1 } e^{ 2 \pi i \xi_{\tau_2} \cdot l^* } = 0. \hspace{10mm}
\end{eqnarray}
This is impossible if $l^*$ belongs to $M L^*$.
\end{proof}

There exists a simple condition equivalent to $l^* \in \Gamma_{ext}(G)$:
\begin{corollary}\label{cor:case of L=H}
For any given crystallographic group $G$,
	\begin{eqnarray}
		l^* \in L^* \text{ belongs to } \Gamma_{ext}(G)
		& \Longleftrightarrow & 
		\exists \tau \in R_G \text{ such that } \tau l^* = l^* \text{ and } \nu_{\tau} \cdot l^* \notin \IntegerRing. \hspace{10mm}
	\end{eqnarray}
\end{corollary}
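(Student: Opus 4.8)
The plan is to apply Corollary \ref{cor:definition of Omega} (equivalently Proposition \ref{prop:basic property of extinction rules2}) in the special case $H = L$, where the machinery collapses substantially. First I would note that when $H = L$ we have $R_H = \{1_N\}$, so the right cosets $R_H \backslash R_G$ are just the elements $\tau \in R_G$, and the equivalence relation $\stackrel{l^*}{\sim}$ becomes $\tau_1 \stackrel{l^*}{\sim} \tau_2 \iff \tau_1 l^* = \tau_2 l^*$. Also $V^{R_H} = V$ and $V^{\sum_{R_H}} = V$, so $x + V^{\sum_{R_H}} = V$, and indeed $\Gamma_{ext}(G) = \Gamma_{ext}(G, L, V)$ as already remarked in the text. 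Taking $x = 0$ (which we may by the translation $v \mapsto v - x$, and which is legitimate since $0 \in V^L = V$), the condition (\ref{eq:equivalent condition of extinction rules}) from Proposition \ref{prop:basic property of extinction rules2} becomes, for each $\tau_1 \in R_G$,
\begin{eqnarray*}
	\sum_{\tau_2 \in R_G,\ \tau_2 l^* = \tau_1 l^*} e^{2 \pi \sqrt{-1}\, \nu_{\tau_2} \cdot l^*} = 0.
\end{eqnarray*}

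Next I would partition $R_G$ according to the value of $\tau l^*$; pick representatives $\tau_1$ and work within one equivalence class $C = \{\tau_2 \in R_G : \tau_2 l^* = \tau_1 l^*\}$. The key observation is that $C$ is a coset of the stabilizer subgroup $\mathrm{Stab}(l^*) := \{\tau \in R_G : \tau l^* = l^*\}$: if $\tau_2, \tau_2' \in C$ then $\tau_2^{-1}\tau_2' \in \mathrm{Stab}(l^*)$. Using the cocycle relation $\nu_{\sigma\tau} \equiv \nu_\sigma^\tau + \nu_\tau \bmod L$ one computes, for $\tau_2' = \tau_2 \rho$ with $\rho \in \mathrm{Stab}(l^*)$, that $\nu_{\tau_2'} \cdot l^* \equiv (\nu_{\tau_2}^\rho + \nu_\rho)\cdot l^* = \nu_{\tau_2}\cdot \rho l^* + \nu_\rho \cdot l^* = \nu_{\tau_2}\cdot l^* + \nu_\rho\cdot l^* \bmod \IntegerRing$. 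Hence within the class $C$ the quantity $e^{2\pi\sqrt{-1}\,\nu_{\tau_2}\cdot l^*}$ equals $e^{2\pi\sqrt{-1}\,\nu_{\tau_1}\cdot l^*}$ times $e^{2\pi\sqrt{-1}\,\nu_\rho\cdot l^*}$ where $\rho = \tau_1^{-1}\tau_2$ ranges over $\mathrm{Stab}(l^*)$. Therefore the sum over $C$ factors as
\begin{eqnarray*}
	e^{2\pi\sqrt{-1}\,\nu_{\tau_1}\cdot l^*}\sum_{\rho \in \mathrm{Stab}(l^*)} e^{2\pi\sqrt{-1}\,\nu_\rho\cdot l^*},
\end{eqnarray*}
and this vanishes if and only if $\sum_{\rho\in\mathrm{Stab}(l^*)} e^{2\pi\sqrt{-1}\,\nu_\rho\cdot l^*} = 0$, independently of the chosen class. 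So $l^* \in \Gamma_{ext}(G)$ if and only if that single character sum over the subgroup $\mathrm{Stab}(l^*)$ is zero.

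Finally I would convert this character-sum condition into the clean existential statement of the corollary. The map $\rho \mapsto e^{2\pi\sqrt{-1}\,\nu_\rho\cdot l^*}$ is a homomorphism from $\mathrm{Stab}(l^*)$ to the unit circle: by the computation above (with $\tau_1 = 1$, so $\nu_{\tau_1} = 0$), $\nu_{\rho_1\rho_2}\cdot l^* \equiv \nu_{\rho_1}\cdot l^* + \nu_{\rho_2}\cdot l^* \bmod \IntegerRing$ for $\rho_1,\rho_2 \in \mathrm{Stab}(l^*)$. A sum of the values of a character of a finite group vanishes if and only if the character is nontrivial; hence the condition $\sum_{\rho} e^{2\pi\sqrt{-1}\,\nu_\rho\cdot l^*} = 0$ is equivalent to the existence of some $\rho = \tau \in \mathrm{Stab}(l^*)$ with $\nu_\tau\cdot l^* \notin \IntegerRing$, i.e. $\tau l^* = l^*$ and $\nu_\tau\cdot l^* \notin \IntegerRing$. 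This is exactly the asserted equivalence. One subtlety to handle carefully is the well-definedness of $\nu_\rho \cdot l^* \bmod \IntegerRing$: $\nu_\rho$ is only defined modulo $L$, but since $l^* \in L^*$ we have $L \cdot l^* \subset \IntegerRing$, so the class in $\RealField/\IntegerRing$ is unambiguous — this is the one routine point that needs to be checked but poses no real difficulty. The main structural step, and the place to be careful, is the coset/cocycle manipulation showing the sum over each equivalence class factors through the subgroup $\mathrm{Stab}(l^*)$; everything else is the elementary fact about vanishing character sums.
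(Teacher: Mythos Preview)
Your proof is correct and follows essentially the same approach as the paper's: identify the $\stackrel{l^*}{\sim}$-equivalence classes as (left) cosets of the stabilizer $R_{G,l^*}$, use the cocycle relation together with $\tau l^* = l^*$ to factor the exponential sum over each class as a fixed phase times $\sum_{\tau \in R_{G,l^*}} e^{2\pi\sqrt{-1}\nu_\tau\cdot l^*}$, and conclude via the vanishing-of-nontrivial-character-sums fact. The only cosmetic difference is that you set $x=0$ while the paper keeps $x$ general in the factored prefactor; the argument is otherwise identical.
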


\begin{proof}
Let $R_{G, l^*} \subset R_G$ be the stabilizer subgroup of $l^*$.
Then, $R_L \tau_2 {\stackrel{l^*}{\sim}} R_L \tau_1$ if and only if $\tau_2 \in \tau_1 R_{G, l^*}$.
Hence, 
\begin{eqnarray}
	\sum_{ R_L \tau_2 {\stackrel{l^*}{\sim}} R_L \tau_1 } e^{2 \pi \sqrt{-1} x^{ \{ \tau_2 | \nu_{\tau_2} \} } \cdot l^* } 	
	&=& \sum_{ \tau \in R_{G, l^*} } e^{2 \pi \sqrt{-1} ( x^{\tau_1 \tau} + \nu_{\tau_1 \tau} ) \cdot l^* } \nonumber \\
	&=& e^{2 \pi \sqrt{-1} (x^{\tau_1} + \nu_{\tau_1} ) \cdot l^* } \sum_{ \tau \in R_{G, l^*} } e^{2 \pi \sqrt{-1} \nu_{\tau} \cdot l^* }.
\end{eqnarray}
The statement follows 
from the fact that $\tau \mapsto e^{2 \pi \sqrt{-1} \nu_{\tau} \cdot l^*}$ is a homomorphism on $R_{G, l^*}$.
\end{proof}

Now that we have defined systematic absences, 
it is possible to formulate $\Lambda_{ext}(\wp)$ in (\ref{eq:definition of Lambda_{ext}(wp)}) precisely;
for any type $(G, H)$ of systematic absences,
we define 
\begin{eqnarray}
	\Lambda_{ext}(G, H) &:=& \left\{ |l^*|^2 : 0 \neq l^* \in L^* \setminus \Gamma_{ext}(G, H) \right\}, \\ 
	\Lambda_{ext}(G) &:=& \Lambda_{ext}(G, \{ id \}).
\end{eqnarray}

From the equivalence condition (\ref{eq:bigcap_{i=1}^m Gamma_{ext}(G, H_{i}, x_{i} + V^{Sigma_{R_{H_i}}})}),
for $\wp$ in (\ref{eq:representation of wp}),
\begin{eqnarray}
	\Lambda_{ext}(\wp) := \bigcup_{i=1}^m \Lambda_{ext}(G, H_i).
\end{eqnarray}
As a result, 
it is only necessary to consider the case of $\Lambda_{ext}(\wp) = \Lambda_{ext}(G, H)$ for all the types $(G, H)$ of systematic absences,
so as to discuss the problem ($A$\ref{item:extinction rules}) of systematic absences.

\section{$C$-type domains and Conway's topographs}
\label{Voronoi's second reduction theory and Conway topograph}

The reduction theory deals with the problem of specifying a domain $D \subset {\mathcal S}^N_{\succ 0}$
satisfying:
\begin{enumerate}[($R$1)]
	\item \label{item:reduction condition 0} 
			When we put $D[g] := \{ g S \tr{g} : S \in D \}$ for any subset $D \subset {\mathcal S}^N_{\succ 0}$ and $g \in GL_N(\IntegerRing)$, the subgroup of $GL_N(\IntegerRing)$
			consisting of all $g \in GL_N(\IntegerRing)$ satisfying $D = D[g]$ has only finite elements.
	\item \label{item:reduction condition 1} For any $g \in GL_N(\IntegerRing)$, $D$ and $D[g]$ do not share interior points,
			except for $g \in H$. 
	\item \label{item:reduction condition 2} ${\mathcal S}^N_{\succ 0}$ is decomposed as follows:
		\begin{eqnarray}\label{eq:cell docomposition}
			{\mathcal S}^N_{\succ 0} = \bigcup_{ g H \in GL_N(\IntegerRing) / H } D[g].
		\end{eqnarray}
\end{enumerate}

In \cite{Conway97}, a topograph was defined from the Selling reduction with $N = 2$.
So we shall recall the Selling reduction first;
let $A_N := (a_{ij})_{1 \leq i, j \leq N} \in {\mathcal S}^N_{\succ 0}$ 
be the Gram matrix of the root lattice ${\mathbb A}_N$ having entries as follows:
	\begin{eqnarray}
		a_{ij} &=&
			\begin{cases}
				2 & \text{if } i=j, \\
				1 & \text{otherwise}.
			\end{cases}
	\end{eqnarray}

Using $A_N$ and the inner-product $\langle S, T \rangle := {\rm Trace}(S T) = \sum_{i=1}^N \sum_{j=1}^N s_{ij} t_{ij}$ on ${\mathcal S}^N$,
the domain ${\mathcal D}^N_{Sel} \subset {\mathcal S}^N_{\succ 0}$ is defined by
\begin{eqnarray}\label{eq:Selling reduction domain}
		{\mathcal D}^N_{Sel} := \left\{ S \in {\mathcal S}^N_{\succ 0} : \langle S, A_N \rangle \leq \langle g S \tr{g}, A_N \rangle \text{ for any } g \in GL_N(\IntegerRing) \right\}.
\end{eqnarray}

If the subgroup of all $g \in GL_N(\IntegerRing)$ satisfying $\tr{g} A_N g = A_N$
is denoted by $H(A_N)$,
${\mathcal S}^N_{\succ 0}$ is partitioned using ${\mathcal D}^N_{Sel}$ by the Selling reduction \cite{Selling1874}:
\begin{eqnarray}\label{eq:partitioning of {mathcal S}^N_{succ 0}}
	{\mathcal S}^N_{\succ 0} = \bigcup_{ g \in GL_N(\IntegerRing) / H(A_N) } {\mathcal D}^N_{Sel}[g],
\end{eqnarray}

In order to generalize the definition of topographs for general $N$,
we'd like to note that the tessellation of (\ref{eq:partitioning of {mathcal S}^N_{succ 0}}) 
coincides with the one given by Voronoi's two reduction theories \cite{Voronoi07}, \cite{Voronoi08} in $N = 2, 3$.
Hence ${\mathcal D}^N_{Sel}$ ($N = 2, 3$) 
is same as the \textit{principal domain of the first type} defined in the two reduction theories.
In the first reduction theory, the principal domain
is defined as the convex cone expanded by $v \tr{v}$ of all the minimal vectors $v$ of $A_N$.  
In the second reduction theory, 
the same domain is represented as follows: 
\begin{eqnarray}
		{\mathcal V}(\Phi) &:=& \{ S \in {\mathcal S}_{\succ 0}^N : \tr{v} S u \leq \tr{u} S u \text{ for any } v \in \Phi \text{ and } u \in \IntegerRing^N \},\\
 		\Phi_0^N &:=& \left\{ \pm \textstyle \sum_{k=1}^N i_k {\mathbf e}_k : i_k = 0, 1 \right\}. \label{eq:definition of Phi_0^N}
\end{eqnarray}

Different from ${\mathcal D}^N_{Sel}$,
in the tessellation of ${\mathcal S}^N_{\succ 0}$ given by the first reduction theory,
every domain is associated explicitly with the set of integral vectors formed by minimal vectors of a perfect form.
Even in the second reduction theory,
such an association is provided  
by using $C$-type domains (a union of finite $L$-type domains), instead of $L$-type domains.

In the following, we choose the association by the second reduction theory,
because of Proposition \ref{prop:facet and parallelogram law}
on the association of facets of primitive $C$-type domains and the parallelogram law.

\subsection{Topographs for lattices of general rank}
\label{Definition for lattices of general rank}

In this section, $C$-type domains and topographs are defined for the general dimension $N$.
We also refer to \cite{Ryshkov76} for more detailed information about $C$-type domains and its connection with covering problems.
In \cite{Conway97}, a topograph was defined to explain the Selling reduction with $N = 2$.
In contrast, vonorms and conorms were used for the case of $N > 2$.

The idea of vonorms and conorms as invariants of a lattice
seems to have originated from the Voronoi vectors defined in Voronoi's second reduction theory;
for a fixed $S \in {\mathcal S}^N_{\succeq 0}$,
if $v \in \IntegerRing^N$ satisfies $\tr{v} S v = {\rm vo}_S(v + 2 \IntegerRing^N)$,
$v$ is called a \textit{Voronoi vector} of $S$.
The \textit{vonorm map} of $S$ is defined as a map from $v + 2 \IntegerRing^N \in \IntegerRing^N / 2 \IntegerRing^N$
to the representation by the Voronoi vector corresponding to $v + 2 \IntegerRing^N$.
\begin{eqnarray}
{\rm vo}_S(v + 2 \IntegerRing^N) := \min \{ \tr{w} S w : w \in v + 2 \IntegerRing^N \}.
\end{eqnarray}


Conorms are the Fourier transform of vonorms: 
when $\chi$ is any character on $\IntegerRing^N / 2 \IntegerRing^N$,
the \textit{conorm} map of $S$ is defined by: 
\begin{eqnarray}
{\rm co}_S(\chi) := - \frac{1}{2^{N-1}} \sum_{v + 2 \IntegerRing^N \in \IntegerRing^N / 2 \IntegerRing^N} {\rm vo}_S(v + 2 \IntegerRing^N) \chi(v).
\end{eqnarray}

In the following, we use vonorm maps in the definition of $C$-type domains for clarity.
Before introducing $C$-type domains, we shall recall the definition of $L$-type domains (also called secondary cones \CF \cite{Vallentin2003});
The Dirichlet--Voronoi polytope of $S$ is defined by: 
\begin{eqnarray}
	{\rm DV}(S) := \{ x \in \RealField^N : \tr{x} S x \leq \tr{(x + l)} S (x + l) \text{ for any } l \in \IntegerRing^N \}.
\end{eqnarray}

From the definition, ${\rm DV}(S)$ is the intersection of half-spaces:
\begin{eqnarray}
	{\rm DV}(S) &=& \bigcap_{0 \neq v \in \IntegerRing^N}
				\{ x \in \RealField^N : \tr{x} S x \leq \tr{(x + v)} S (x + v) \}.
\end{eqnarray}

As proved in \cite{Voronoi08} (\CF \cite{Conway92}), 
$v \in \IntegerRing^N$ is a Voronoi vector
if and only if the hyperplane 
$H_{S, v} := \left\{ x \in \RealField^N : \tr{x} S x = \tr{(x + v)} S (x + v) \right\}$
intersects ${\rm DV}(S)$.

A tiling of $\RealField^N$ is given by the Dirichlet--Voronoi polytopes:
\begin{eqnarray}\label{eq:tiling by Diriclet-Voronoi polytopes}
	\RealField^N = \bigcup_{l \in \IntegerRing^N} ({\rm DV}(S) + l).
\end{eqnarray}

The Delone subdivision is a dual tiling of (\ref{eq:tiling by Diriclet-Voronoi polytopes}).
If we let $P_S$ be the set of extreme points of ${\rm DV}(S)$,
and denote the set of all Voronoi vectors $v$ satisfying $p \in H_{S, v}$ by $\Psi_p$ for any $p \in P_S$, then 
every $v \in \Psi_p$
satisfies $\tr{p} S p = \tr{(p + v)} S (p + v)$.
Therefore, an ellipsoid $\{ x \in \RealField^N : \tr{(x + p)} S (x + p) = \tr{p} S p \}$
passes through all the elements of $\{ 0 \} \cup \Psi_p \subset \IntegerRing^N$.
If $L_p$ is the convex hull of $\{ 0 \} \cup \Psi_p$, then the Delone subdivision of $\RealField^N$ 
is given by:
\begin{eqnarray}
	{\rm Del}(S) := \bigcup_{ p \in P_S, l \in \IntegerRing^N } L_p + l.
\end{eqnarray}

In this case, a \textit{$L$-type domain} containing $S$ is defined by
$\{ S_2 \in {\mathcal S}^N : {\rm Del}(S) = {\rm Del}(S_2) \}$.

For any subset $D \subset {\mathcal S}^N_{\succ 0}$ and $g \in GL_N(\IntegerRing)$,
define $D[g] := \{ g S \tr{g} : S \in D \}$.
Two domains $D_1, D_2 \subset {\mathcal S}^N_{\succ 0}$ are said to be \textit{equivalent}
if and only if $D_1[g] = D_2$ holds for some $g \in GL_N(\IntegerRing)$.
Voronoi proved that the number of equivalence classes of $L$-type domains of dimension $\frac{N(N+1)}{2}$ 
is finite, and provided an algorithm to gain all the equivalence classes \cite{Voronoi08}.
This is the outline of Voronoi's second reduction theory.

For fixed $S \in {\mathcal S}^N_{\succ 0}$,
define $\Phi_S := \{ [v] : v \in \IntegerRing^N \text{ is a Voronoi vector of } S \}$,
where $[v]$ represents the class of $v \in \IntegerRing^N$ when $v$ and $-v$ are identified.
A \textit{$C$-type domain} containing $S$ is defined by
${\mathcal V}(\Phi) := \{ S_2 \in {\mathcal S}_{\succ 0}^N : \Phi_S = \Phi_{S_2} \}$, \IE a set of elements of ${\mathcal S}_{\succ 0}^N$ having the same Voronoi vectors.
Then, ${\mathcal V}(\Phi)$
is a union of finite $L$-type domains,
because a set of Voronoi vectors of $S$
is decomposed into $\bigcup_{p \in P_S} \Psi_p$ in different ways, depending on $S$.
From the definition, the Voronoi map is a linear function on ${\mathcal V}(\Phi_S)$ for any $S \in {\mathcal S}^N_{\succ 0}$.

More generally, we shall define $C$-type domains for any set $\Phi := \{ [v_1], \ldots, [v_{n}] \}$ of arbitrary size $n$ 
with $v_1, \ldots, v_n \in \IntegerRing^N$: 
\begin{eqnarray}
	{\mathcal V}(\Phi) := \{ S \in {\mathcal S}^N_{\succ 0} : \tr{v} S v = {\rm vo}_S(v + 2 \IntegerRing^N)  \text{ for any } [v] \in \Phi \}.
\end{eqnarray}

This is well defined, because both $\tr{v} S v$ and $v + 2 \IntegerRing$ are invariant if $v$ is replaced by $-v$.
From the definition, ${\mathcal V}(\Phi)$ is an intersection of the following half-spaces.
\begin{eqnarray}
	{\mathcal V}(\Phi) &=& \bigcap_{ [v] \in \Phi } \bigcap_{ u + 2 \IntegerRing^N = v + 2 \IntegerRing^N} H^{\geq 0}(u, v), \\
	H^{\geq 0}(u, v) &:=& \{ S \in {\mathcal S}^N_{\succ 0} : \tr{u} S u \geq \tr{v} S v \}.
\end{eqnarray}

Any $S \in {\mathcal S}^N_{\succ 0}$ is contained in ${\mathcal V}(\Phi_S)$.
$S$ is said to be in a \textit{general position} 
if $\Phi_S$ has exactly $2^N$ elements.  
If $S$ is in a general position,
${\mathcal V}(\Phi_S)$ includes an open neighbor of $S$.
Such $C$-type domains are called \text{primitive}.
Otherwise, there exist $[v] \neq [u] \in \IntegerRing^N$ such that $u + 2 \IntegerRing^N = v + 2 \IntegerRing^N$,
and $S$ belongs to the following hyperplanes:
\begin{eqnarray}
	H( u, v ) := \{ S \in {\mathcal S}^N : \tr{u} S u = \tr{v} S v \}.
\end{eqnarray}

Even for such $S$, by perturbing the entries of $S$, 
$\tilde{S}$ in a general position satisfying $\Phi_S \subset \Phi_{\tilde{S}}$ 
is obtained.
As a result, the following partitioning of ${\mathcal S}^N_{\succ 0}$ is obtained.
\begin{eqnarray}\label{eq:decomposition of S_{> 0}^N}
	{\mathcal S}^N_{\succ 0} &=& \bigcup_{{\mathcal V} \in P_N} {\mathcal V}, \\
	P_N &:=& \left\{ {\mathcal V}(\Phi_S) : S \in {\mathcal S}_{\succ 0}^N, {\mathcal V}(\Phi_S) \text{ is primitive} \right\}.
\end{eqnarray}

This tessellation is coarser than that given by $L$-type domains.
Hence, similarly with $L$-type domains,
the number of equivalence classes of $C$-type domains of dimension $\frac{N(N+1)}{2}$ is finite.

Table \ref{extreme rays and facets of all the representatives of equivalent V-type domains}
lists all the representatives of equivalence classes of $C$-type domains for $1 \leq N \leq 4$.
Each domain ${\mathcal V}(\Phi_i^k)$ in Table \ref{extreme rays and facets of all the representatives of equivalent V-type domains} 
has a set of extreme rays provided by: 
	\begin{eqnarray}
		M(\Phi_0^k) &:=& \left\{ v \tr{v} : v = {\mathbf e}_i\ (1 \leq i \leq N), {\mathbf e}_i - {\mathbf e}_j\ (1 \leq i < j \leq N) \right\}, \\
M(\Phi_1^4) &:=& \left( M(\Phi_0^4) \cup \left\{ D_4 \right\} \right)
				\setminus \left\{ v \tr{v} : v = {\mathbf e}_1 - {\mathbf e}_2 \right\}, \\
M(\Phi_2^4) &:=& \left( M(\Phi_0^4) \cup \left\{ v \tr{v} : v = {\mathbf e}_1 + {\mathbf e}_2 - {\mathbf e}_3 - {\mathbf e}_4 \right\}
				\cup \left\{ D_4, D_{4,2} \right\} \right)
			\setminus \left\{ v \tr{v} : v = {\mathbf e}_1 - {\mathbf e}_2, {\mathbf e}_3 - {\mathbf e}_4 \right\}, \hspace{10mm}
	\end{eqnarray}
where 
$D_4$ and $D_{4,2}$ are the equivalent perfect forms corresponding to the root lattice ${\mathbb D}_4$:
	\begin{eqnarray}\label{def:definition of D_4}
		D_4 &:=& 
			\begin{pmatrix}
				2 &  1 & -1 & -1 \\ 
			    1 &  2 & -1 & -1 \\ 
			   -1 & -1 &  2 &  0 \\ 
			   -1 & -1 &  0 &  2 
			\end{pmatrix},\
		D_{4,2} :=
			\begin{pmatrix}
				2 &  0 & -1 & -1 \\ 
			    0 &  2 & -1 & -1 \\ 
			   -1 & -1 &  2 &  1 \\ 
			   -1 & -1 &  1 &  2 
			\end{pmatrix}.
	\end{eqnarray}

\begin{table}
	\caption{Equivalence classes of primitive $C$-type domains}
	\label{extreme rays and facets of all the representatives of equivalent V-type domains}
	\begin{minipage}{\textwidth}
	\begin{tabular}{lll}
	\hline
	& \textbf{Voronoi vectors} & \textbf{Primitive $C$-type domain} \\
	\textbf{$N = 1$}
	& $\Phi_0^1 := \left\{ [0], [{\mathbf e}_1] \right\}$
	& ${\mathcal V}(\Phi_0^1) = {\mathcal S}^1_{\geq 0}$. \\
	\\
	\textbf{$N = 2$}
	& $\Phi_0^2 := \left\{ [0], [{\mathbf e}_1], [{\mathbf e}_2], [{\mathbf e}_1 + {\mathbf e}_2] \right\}$
	& ${\mathcal V}(\Phi_0^2) = \bigcap_{1 \leq i < j \leq 3} H^{\geq 0}( {\mathbf e}_i - {\mathbf e}_j, {\mathbf e}_i + {\mathbf e}_j )$ \\
	\\
	\textbf{$N = 3$}
	& $\Phi_0^3 := \left\{ \left[ \sum_{k=1}^3 i_k {\mathbf e}_k \right] : i_k = 0 \text{ or } 1 \right\}$
	& ${\mathcal V}(\Phi_0^3) = \bigcap_{1 \leq i < j \leq 4} H^{\geq 0}( {\mathbf e}_i - {\mathbf e}_j, {\mathbf e}_i + {\mathbf e}_j )$ \\
	\\
	\textbf{$N = 4$}
	& $\Phi_0^4 := \left\{ \left[ \sum_{k=1}^4 i_k {\mathbf e}_k \right] : i_k = 0 \text{ or } 1 \right\}$
	& ${\mathcal V}(\Phi_0^4) = \bigcap_{1 \leq i < j \leq 5} H^{\geq 0}( {\mathbf e}_i - {\mathbf e}_j,  {\mathbf e}_i + {\mathbf e}_j )$ \\
	\\
	& $\begin{matrix} \Phi_1^4 := \Phi_0^4 \cup \left\{ [{\mathbf e}_1 - {\mathbf e}_2] \right\} \setminus \left\{ [{\mathbf e}_1 + {\mathbf e}_2] \right\} \end{matrix}$ &
	${\mathcal V}(\Phi_1^4) = H^{\geq 0} \left( {\mathbf e}_1 + {\mathbf e}_2, {\mathbf e}_1 - {\mathbf e}_2 \right)$ \\
	& & \hspace{15mm} $\cap \left( \bigcap_{3 \leq i < j \leq 5} H^{\geq 0} \left( {\mathbf e}_i - {\mathbf e}_j, {\mathbf e}_i + {\mathbf e}_j \right) \right)$ \\
	& & \hspace{15mm} $\cap \left( \bigcap_{{3 \leq i < j \leq 5,}\atop{k = 1, 2}} H^{\geq 0} \left( {\mathbf e}_i + {\mathbf e}_j + 2 {\mathbf e}_k, {\mathbf e}_i + {\mathbf e}_j \right) \right)$ 
	\\
	& $\begin{matrix} \Phi_2^4 := \Phi_0^4 \cup \left\{ [{\mathbf e}_1 - {\mathbf e}_2], [{\mathbf e}_3 - {\mathbf e}_4] \right\} \\ \hspace{5mm} \setminus \left\{ [{\mathbf e}_1 + {\mathbf e}_2], [{\mathbf e}_3 + {\mathbf e}_4] \right\} \end{matrix}$ 
	& ${\mathcal V}(\Phi_2^4)
		= H^{\geq 0} \left( {\mathbf e}_1 + {\mathbf e}_2, {\mathbf e}_1 - {\mathbf e}_2 \right)
			\cap H^{\geq 0} \left( {\mathbf e}_3 + {\mathbf e}_4, {\mathbf e}_3 - {\mathbf e}_4 \right)$ \\
	& & \hspace{15mm} $\cap \left( \bigcap_{k = 3, 4} H^{\geq 0} \left( {\mathbf e}_1 + {\mathbf e}_2 + 2 {\mathbf e}_k, {\mathbf e}_1 - {\mathbf e}_2 \right) \right)$ \\
	& & \hspace{15mm} $\cap \left( \bigcap_{k = 1, 2} H^{\geq 0} \left( {\mathbf e}_3 + {\mathbf e}_4 + 2 {\mathbf e}_k, {\mathbf e}_3 - {\mathbf e}_4 \right) \right)$ \\
	& & \hspace{15mm} $\cap \left( \bigcap_{i = 1}^4 H^{\geq 0} \left( {\mathbf e}_i - {\mathbf e}_5, {\mathbf e}_i + {\mathbf e}_5 \right) \right)$ \\
	& & \hspace{15mm} $\cap \left( \bigcap_{ {i = 1, 2, k = 3, 4}\atop{\text{or } i = 3, 4, k = 1, 2} } H^{\geq 0} \left( {\mathbf e}_i + {\mathbf e}_5 + 2 {\mathbf e}_k, {\mathbf e}_i + {\mathbf e}_5 \right) \right)$ \\
	\end{tabular}
	\footnotetext[1]{Here, we put ${\mathbf e}_{N+1} := -\sum_{i=1}^N {\mathbf e}_{i}$ in every $N$-dimensional case.}
	\footnotetext[2]{Among the above domains, only ${\mathcal V}(\Phi_2^4)$ is not an $L$-type domain, but is a union of two $L$-type domains.  $L$-type domains for $N = 4$ are listed in \cite{Vallentin2003}. 
	}
\end{minipage}
\end{table}

The following proposition claims that every facet of a primitive $C$-type domain
is associated with a set of four vectors satisfying the parallelogram law.
Although this is the most important property in our discussion, 
we could not find references mentioning this explicitly.

\begin{proposition}\label{prop:facet and parallelogram law}
Suppose that two primitive $C$-type domains ${\mathcal V}(\Phi_{S_1}) \neq {\mathcal V}(\Phi_{S_2})$
have an  $(\frac{N(N+1)}{2} - 1)$-dimensional cone as their intersection.
Then $\Phi_{S_1} \cap \Phi_{S_2}$ contains exactly $2^N - 1$ elements.
Hence, there exist $u, v \in \IntegerRing^N$ such that 
$\Phi_{S_1} \setminus \Phi_{S_2} = \{ [ u ] \}$ and $\Phi_{S_2} \setminus \Phi_{S_1} = \{ [ v ] \}$.
${\mathcal V}(\Phi_{S_1} \cup \Phi_{S_2}) \subset H( u, v )$ is 
a common facet of ${\mathcal V}(\Phi_{S_1})$ and ${\mathcal V}(\Phi_{S_2})$.
Furthermore,
$\{ \frac{v + u}{2}, \frac{v - u}{2} \}$ is a primitive set of $\IntegerRing^N$,
and $[ \frac{v + u}{2} ]$, $[ \frac{v - u}{2} ]$ are elements of $\Phi_{S_1} \cap \Phi_{S_2}$.
\end{proposition}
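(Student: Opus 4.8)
The plan is to work entirely with the combinatorial/geometric structure of the half-space representation ${\mathcal V}(\Phi) = \bigcap_{[v] \in \Phi} \bigcap_{u + 2\IntegerRing^N = v + 2\IntegerRing^N} H^{\geq 0}(u, v)$ and the fact that a primitive $C$-type domain ${\mathcal V}(\Phi_S)$ corresponds to an $S$ in general position, hence $\Phi_S$ has exactly $2^N$ elements, one for each nonzero class of $\IntegerRing^N / 2 \IntegerRing^N$. First I would observe that since $\Phi_{S_1}$ and $\Phi_{S_2}$ are both systems of representatives of $(\IntegerRing^N / 2 \IntegerRing^N) \setminus \{0\}$, the symmetric difference is controlled class by class: for each nonzero class $c$ they may choose the same representative or opposite ones. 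If they differed in two or more classes, I would argue that the intersection ${\mathcal V}(\Phi_{S_1}) \cap {\mathcal V}(\Phi_{S_2})$ would be forced to lie in at least two independent hyperplanes $H(u_1, v_1)$, $H(u_2, v_2)$ (with $u_i + 2\IntegerRing^N = v_i + 2\IntegerRing^N$ but in distinct classes, so the linear functionals $\tr{u_i} S u_i - \tr{v_i} S v_i$ are linearly independent on ${\mathcal S}^N$), contradicting that the intersection is a cone of codimension exactly $1$. Hence $\Phi_{S_1}$ and $\Phi_{S_2}$ agree in all but one class, giving $|\Phi_{S_1} \cap \Phi_{S_2}| = 2^N - 1$ and the existence of $u, v$ with $\Phi_{S_1} \setminus \Phi_{S_2} = \{[u]\}$, $\Phi_{S_2} \setminus \Phi_{S_1} = \{[v]\}$ and $u + 2\IntegerRing^N = v + 2\IntegerRing^N$.

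Next I would identify the common facet. On the relative interior of ${\mathcal V}(\Phi_{S_1}) \cap {\mathcal V}(\Phi_{S_2})$ we have $S$ satisfying $\tr{u} S u = \tr{v} S v$, because $S$ is a limit of points in ${\mathcal V}(\Phi_{S_1})$ (where $[u]$ is the Voronoi representative, so $\tr{u} S u \leq \tr{v} S v$) and of points in ${\mathcal V}(\Phi_{S_2})$ (giving the reverse inequality). So the shared boundary lies in $H(u, v)$, and since $\Phi_{S_1} \cup \Phi_{S_2} = (\Phi_{S_1} \cap \Phi_{S_2}) \cup \{[u], [v]\}$, one checks directly from the half-space description that ${\mathcal V}(\Phi_{S_1} \cup \Phi_{S_2})$ is exactly this codimension-one face, common to both domains.

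The remaining, and I expect most delicate, step is the parallelogram-law conclusion: that $w_\pm := \frac{v \pm u}{2}$ are integral, form a primitive pair in $\IntegerRing^N$, and that $[w_+], [w_-] \in \Phi_{S_1} \cap \Phi_{S_2}$. Integrality follows because $u \equiv v \bmod 2\IntegerRing^N$. For primitivity I would use the classical fact (Voronoi, \CF \cite{Conway92}) that when two Delone/$L$-type cells are glued across a ``flip'' the switched Voronoi vectors $u, v$ and the two retained ones $w_+, w_-$ span a planar sublattice in which $\{w_+, w_-\}$ is a basis and $u = w_+ + w_-$, $v = w_+ - w_-$ (up to sign) — this is the two-dimensional ``bistellar flip'' picture of Delone subdivisions; primitivity of $\{w_+, w_-\}$ in $\IntegerRing^N$ then comes from $u, v$ being Voronoi vectors (so in particular primitive) together with $u \pm v = 2 w_\pm$. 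To show $[w_\pm] \in \Phi_{S_1} \cap \Phi_{S_2}$ I would argue by exclusion: $w_+ + 2\IntegerRing^N$ and $w_- + 2\IntegerRing^N$ are nonzero classes different from the class of $u$ (hence from that of $v$); since $\Phi_{S_1}$ and $\Phi_{S_2}$ agree outside the single class $\{[u]\} = \{[v]\}$, whatever representative of the class of $w_\pm$ lies in $\Phi_{S_1}$ also lies in $\Phi_{S_2}$, so it suffices to see $[w_\pm] \in \Phi_{S_1}$, i.e.\ that $w_\pm$ is the Voronoi vector of its class for $S_1$. This I would deduce from the geometry of the flip: on the facet $H(u,v)$ the Delone cell containing $0, u, v, w_+, w_-$ has $w_+, w_-$ on the boundary ellipsoid through $0$, forcing $\tr{w_\pm} S w_\pm = {\rm vo}_S(w_\pm + 2\IntegerRing^N)$ for $S$ on the facet, and by continuity/linearity of the vonorm map on the primitive domain ${\mathcal V}(\Phi_{S_1})$ this persists throughout that domain. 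The main obstacle is making this last ``flip geometry'' argument rigorous without re-deriving Voronoi's reduction theory; I would lean on the cited references \cite{Voronoi08}, \cite{Conway92} for the structure of neighboring $L$-type domains and restrict attention to the combinatorics of $\IntegerRing^N / 2\IntegerRing^N$ to keep the argument self-contained.
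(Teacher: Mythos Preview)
Your plan inverts the logical order of the paper's argument, and this creates a real gap at the very first step. You assert that if $\Phi_{S_1}$ and $\Phi_{S_2}$ differ in two distinct classes of $\IntegerRing^N/2\IntegerRing^N$, then the functionals $S \mapsto \tr{u_i}S u_i - \tr{v_i}S v_i$ are linearly independent simply because the classes differ. This is false without further input: take $u_1={\mathbf e}_1+{\mathbf e}_2$, $v_1={\mathbf e}_1-{\mathbf e}_2$ (class $(1,1)$) and $u_2={\mathbf e}_1+2{\mathbf e}_2$, $v_2={\mathbf e}_1-2{\mathbf e}_2$ (class $(1,0)$); both pairs cut out the same hyperplane $\{s_{12}=0\}$. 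What rules this out in the actual situation is that $[u_2]$ would have to be the Voronoi representative of the class $(1,0)$ for $S_1$, while ${\mathbf e}_1=\frac{u_1-v_1}{2}$ already is, and establishing \emph{that} is exactly the parallelogram-law step you defer to the end. Similarly, your primitivity argument (``$u,v$ Voronoi hence primitive, together with $u\pm v=2w_\pm$'') does not work as stated: individual primitivity of $u,v$ does not imply that $\{w_+,w_-\}$ is a primitive set.

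The paper runs the argument in the opposite order, and the key step is much simpler than your Delone-flip picture. For any $S_3\in{\mathcal V}(\Phi_{S_1})\cap{\mathcal V}(\Phi_{S_2})$ both $u$ and $v$ are Voronoi vectors; averaging the two inequalities $\tr{(u+2l)}S_3(u+2l)\geq \tr{u}S_3u$ and $\tr{(v+2l)}S_3(v+2l)\geq \tr{v}S_3v$ gives $\tr{(w_\pm+2l)}S_3(w_\pm+2l)\geq \tr{w_\pm}S_3w_\pm$ for all $l$, so $w_\pm=\tfrac{u\pm v}{2}$ are Voronoi vectors of $S_3$ directly. One then passes to the rank-$2$ sublattice containing $u,v,w_+,w_-$: all four are Voronoi vectors of the restricted form $S_4$, and the $2$-dimensional classification (there is a single primitive $C$-type domain up to $GL_2(\IntegerRing)$, namely ${\mathcal V}(\Phi_0^2)$) forces $\{w_+,w_-\}$ to be a basis of that sublattice, hence a primitive set of $\IntegerRing^N$. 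Only after normalizing $w_-={\mathbf e}_1$, $w_+={\mathbf e}_2$ does the hyperplane argument enter: now $H(u,v)=\{s_{12}=0\}$, and because $[{\mathbf e}_1],[{\mathbf e}_2]\in\Phi_{S_3}$ have been secured, one can check that any second pair $(u_2,v_2)$ forced onto the same hyperplane must satisfy $\{[u_2],[v_2]\}=\{[u_1],[v_1]\}$. So the averaging inequality is the engine of the whole proof, not an afterthought.
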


\begin{proof}
From ${\mathcal V}(\Phi_{S_1}) \neq {\mathcal V}(\Phi_{S_2})$,
there exist $u_1, v_1$ such that $u_1 + 2 \IntegerRing^N = v_1 + 2 \IntegerRing^N$,
$[u_1] \in \Phi_{S_1} \setminus \Phi_{S_2}$, and $[v_1] \in \Phi_{S_2} \setminus \Phi_{S_1}$.
Since $u_1$, $v_1$ are Voronoi vectors of any $S_3 \in {\mathcal V}(\Phi_{S_1}) \cap {\mathcal V}(\Phi_{S_2})$,
we have $\tr{u_1} S_3 l \leq \tr{l} S l$ and 
$\tr{v_1} S_3 l \leq \tr{l} S l$.
Hence, $\frac{u_1 + v_1}{2}$ and $\frac{u_1 - v_1}{2}$ are also Voronoi vectors of $S_3$.
Replacing $u_i, v_i$ with $g u_i, g v_i$ ($g \in GL_N(\IntegerRing)$) if necessary,
we may assume $\frac{u_1 - v_1}{2} = {\mathbf e}_1$ and $\frac{u_1 + v_1}{2} = m{\mathbf e}_1 + n{\mathbf e}_2$ for some $m, n \in \IntegerRing$.
Then $u_1, v_1$ and $\frac{u_1 \pm v_1}{2}$
are Voronoi vectors of $S_4 := (\tr{{\mathbf e}_i} S_3 {\mathbf e}_j)_{1 \leq i, j \leq 2} \in {\mathcal S}^2_{\succ 0}$.
Hence $S_4$ belongs to ${\mathcal V}(\{ [\frac{u_1 \pm v_1}{2}], u_1 \})$, 
which is equivalent to ${\mathcal V}(\Phi_0^2)$ in Table \ref{extreme rays and facets of all the representatives of equivalent V-type domains}. 
As a result, there exists $g \in GL_N(\IntegerRing)$ such that 
$g (\frac{u_1 - v_1}{2}) = {\mathbf e}_1$ and $g (\frac{u_1 + v_1}{2}) = {\mathbf e}_2$.
Hence, it is concluded that $\{ \frac{u_1 + v_1}{2}, \frac{u_1 - v_1}{2} \}$ is a primitive set of $\IntegerRing^N$.
Suppose that there exists another $(u_2, v_2) \neq (u_1, v_1)$
satisfying $u_2 + 2 \IntegerRing^N = v_2 + 2 \IntegerRing^N$,
$[u_2] \in \Phi_{S_1} \setminus \Phi_{S_2}$, and $[v_2] \in \Phi_{S_2} \setminus \Phi_{S_1}$.
From the dimension of ${\mathcal V}(\Phi_{S_1}) \cap {\mathcal V}(\Phi_{S_2})$,
the following must hold for any $S_3 \in {\mathcal S}^N$:
\begin{eqnarray}\label{eq:equivalence condition from H(u, v)}
	\tr{(\frac{u_1 + v_1}{2})} S_3 (\frac{u_1 - v_1}{2}) = 0 \Longleftrightarrow \tr{(\frac{u_2 + v_2}{2})} S_3 (\frac{u_2 - v_2}{2}) = 0.
\end{eqnarray}
We may now assume $\frac{u_1 - v_1}{2} = {\mathbf e}_1$ and $\frac{u_1 + v_1}{2} = {\mathbf e}_2$.
Then, $\{ [u_1], [v_1] \} = \{ [u_2], [v_2] \}$
 is easily obtained.
Therefore,
$\Phi_{S_1} \setminus \Phi_{S_2}$ and $\Phi_{S_2} \setminus \Phi_{S_1}$
consist of only one element. 
Now the remaining statements follow immediately. 
\end{proof}

By Proposition \ref{prop:facet and parallelogram law}, it is proved that the decomposition (\ref{eq:decomposition of S_{> 0}^N})
is a facet-to-facet tessellation.
Hence, (\ref{eq:decomposition of S_{> 0}^N}) is also a face-to-face tessellation 
by a theorem of Gruber and Ryshkov \cite{Gruber89}. 

Using the $V$-partitioning (\ref{eq:decomposition of S_{> 0}^N}), a topograph is defined for general $N$.
\begin{definition}\label{def:definition of topographs}
Define $CT_{N}$ as the graph
which has $P_N$, $E_N$ 
as its sets of nodes and edges, respectively.
\begin{eqnarray}
	P_N &:=& \left\{ {\mathcal V}(\Phi_S) : S \in {\mathcal S}^N_{\succ 0},\ \Phi_S \textit{ is primitive} \right\}, \\
	E_N &:=& \left\{ e_{ {\mathcal V}(\Phi_{1}), {\mathcal V}(\Phi_{2}) } : {\mathcal V}(\Phi_{1}) \neq {\mathcal V}(\Phi_{2}) \in P_N \text{ share a facet} \right\},
\end{eqnarray}
where $e_{ {\mathcal V}(\Phi_{1}), {\mathcal V}(\Phi_{2}) }$ connects two nodes ${\mathcal V}(\Phi_{1})$, ${\mathcal V}(\Phi_{2}) \in P_N$.
When $S_0 \in {\mathcal S}^N$ is fixed arbitrarily,
edges in $E_N$ are associated with two representations of $S_0$ over $\IntegerRing$ by the map:
\begin{eqnarray}
	f_{S_0}( e_{ {\mathcal V}(\Phi_{1}), {\mathcal V}(\Phi_{2}) } ) := \{ \tr{u} S_0 u, \tr{v} S_0 v \},
\end{eqnarray}
where $u, v \in \IntegerRing^N$ are taken so that 
$\{ [u+v] \} = \Phi_{S_1} \setminus \Phi_{S_2}$ and $\{ [u-v] \} = \Phi_{S_2} \setminus \Phi_{S_1}$.
Such an edge is represented in Figure \ref{Edge and associated vectors}.
Furthermore, the direction of the edge is defined as in Figure \ref{Direction of edges in topographs}.  
Assuming every edge is oriented by this,
we call the pair $CT_{N, S_0} := (CT_N, f_{S_0})$ a \textit{topograph} of $S_0$.
\end{definition}

\begin{figure}
\begin{minipage}{\textwidth}
\begin{center}
\scalebox{0.64}{\includegraphics{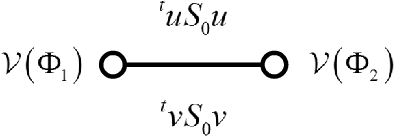}}
\end{center}
\end{minipage}
\caption{Representations associated with an edge of a topograph.}
\label{Edge and associated vectors}
\end{figure}

\begin{figure}
\begin{minipage}{\textwidth}
\begin{center}
\scalebox{0.64}{\includegraphics{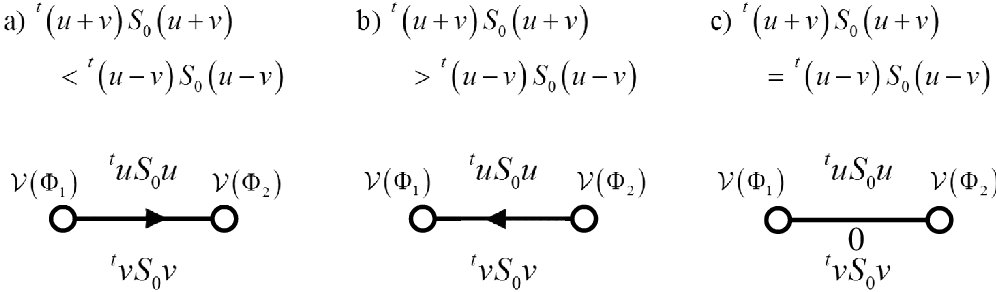}}
\end{center}
\footnotetext{
When ${\mathcal  V}(\Phi_1)$, ${\mathcal V}(\Phi_2) \in P_N$ 
have a common facet, there exists $u, v \in \IntegerRing^N$
such that $\Phi_1 \setminus \Phi_2 = \{ \pm (u + v) \}$,
$\Phi_2 \setminus \Phi_1 = \{ \pm (u - v) \}$.
In this case, the direction of the edge connecting ${\mathcal V}(\Phi_1)$ and ${\mathcal V}(\Phi_2)$
is defined as in \cite{Conway97}.}
\end{minipage}
\caption{Direction of edges of a topograph $CT_{N, S_0}$.}
\label{Direction of edges in topographs}
\end{figure}

\subsection{Topographs for low-dimensional lattices}
\label{Topographs for low-dimensional lattices}

In this section, the structures of topographs for lattices of rank $N = 2, 3$ are explained.
Since the same topic is also discussed in \cite{Conway97},
we mention only basic facts necessary in the following sections.
In $N = 2, 3$, the structures are also determined from the partitioning (\ref{eq:partitioning of {mathcal S}^N_{succ 0}})
of the Selling reduction.
In particular, the set of nodes is provided by
\begin{eqnarray}
	P_N &:=& \left\{ {\mathcal D}^N_{Sel}[g] = {\mathcal V}(\tr{g}^{-1} \Phi_0^N) : g \in GL_N(\IntegerRing) \right\}.
\end{eqnarray}
By Voronoi's second reduction theory,
every node ${\mathcal D}^N_{Sel}[g]$ is associated with $\tr{g}^{-1} \Phi_0^N$.

In the following, a lattice $L$ of rank $N$, a basis $b_1, \ldots, b_N$ of $L$ are fixed,
and a matrix $\begin{pmatrix} b_1 & \cdots & b_N \end{pmatrix}$ is denoted by $B$.
In order to clarify $\tr{B} B$ is the Gram matrix of $(L, B)$,
we utilize the following notation, instead of $f_{\tr{B} B}$, $CT_{N, \tr{B} B}$.
\begin{eqnarray}\label{eq:definition of f_L} 
	f_{(L, B)}( e_{ {\mathcal V}(\Phi_1), {\mathcal V}(\Phi_2) } ) &:=& f_{\tr{B} B}( e_{ {\mathcal V}(\Phi_1), {\mathcal V}(\Phi_2) } ) = \{ \abs{B u}^2, \abs{B v}^2 \}, \\
	CT_{N, (L, B)} &:=& CT_{N, \tr{B} B}.
\end{eqnarray}
where $u, v \in \IntegerRing^N$ are chosen as in Definition \ref{def:definition of topographs}.

Basic properties of $CT_{2, (L, B)}$ and $CT_{3, (L, B)}$ are explained in the following examples.

\begin{example}\label{Topographs in N = 2.}{\bf Case of $CT_{2, (L, B)}$.}
${\mathcal D}_{Sel}^2 = {\mathcal V}(\Phi_0^2)$ is a polyhedral cone surrounded by the three hyperplanes
 in Table \ref{extreme rays and facets of all the representatives of equivalent V-type domains}.
Hence, a node of $CT_2$ is adjacent to three nodes, as in Figure \ref{Local structure of a topograph CT_2}.

\begin{figure}[htbp]
\begin{minipage}{\textwidth}
\begin{center}
\scalebox{0.64}{\includegraphics{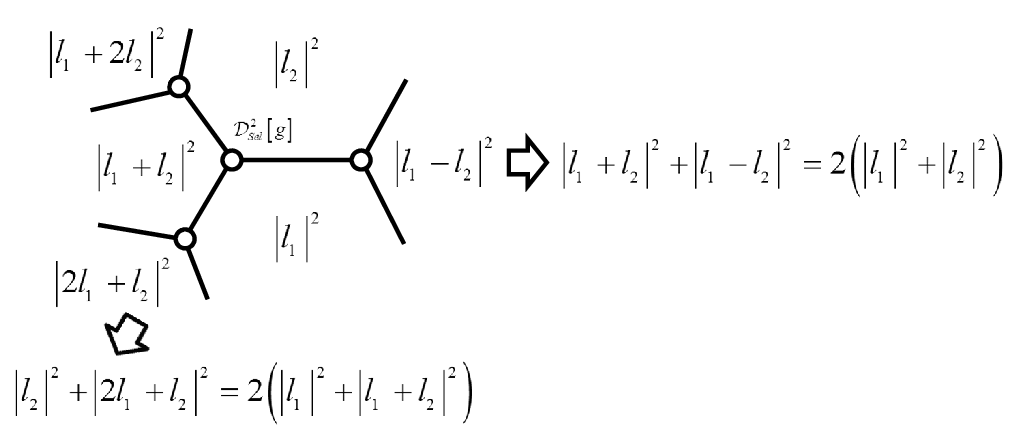}}
\end{center}
\footnotetext{
Let ${\mathbf e}_3 := - {\mathbf e}_1 - {\mathbf e}_2$, and
$\tau^{(2)}_{ij}$ be the $2 \times 2$ matrix satisfying
\begin{eqnarray}
	\tau_{ij}^{(2)} {\mathbf e}_i  = - {\mathbf e}_j,\
	\tau_{ij}^{(2)} {\mathbf e}_j  = {\mathbf e}_i.
\end{eqnarray}
When we put $(l_1\ l_2) := (b_1\ b_2) \tr{g}^{-1}$ and $l_3 := -l_1 - l_2$ for the fixed basis $b_1$, $b_2$ of $L$ and $g \in GL_2(\IntegerRing)$,
every node ${\mathcal D}_{Sel}^2[g] = {\mathcal V}(\tr{g}^{-1} \Phi_0^2)$ is an end point 
of three edges $e_{ {\mathcal V}(\tr{g}^{-1} \Phi_0^2), {\mathcal V}(\tr{g}^{-1} \tau_{ij}^{(2)} \Phi_0^2) }$ ($1 \leq i < j \leq 3$)
that are associated with $\{ |l_i|^2, |l_j|^2 \}$.
(In this figure, any edge between two domains labeled $\abs{ k_1 }^2$ and $\abs{ k_2 }^2$
is considered to be associated with $\{ |k_1|^2, |k_2|^2 \}$.  
Such a labeling of domains is achieved by embedding the topograph in the upper half plane as in 
Figure \ref{Tree with action PGL(2, IntegerRing)}, and labeling every domain $D \subset {\mathbb H}$ 
surrounded by topograph edges with $\abs{l}^2$ 
of the minimal vector $l$ of some $S \in \iota^{-1}(D)$.  This is well-defined,
because $l$ is the only minimal vector of all $S \in \iota^{-1}(D)$.)
}
\end{minipage}
\caption{Local structure of topograph $CT_{2, (L, (b_1\ b_2))}$.} 
\label{Local structure of a topograph CT_2}
\end{figure}

It can be proved that $CT_2$ is a tree as in Figure \ref{Tree with action PGL(2, IntegerRing)}.
The tree is embedded in ${\mathcal S}^2_{\succ 0} / \RealField_{> 0} \simeq {\mathbb H}$
by mapping a node ${\mathcal V}(\tr{g}^{-1} \Phi_0^2)$ to the perfect form $g A_2^{-1} \tr{g}$,
and an edge between ${\mathcal V}(\tr{g_1}^{-1} \Phi_0^2)$ and ${\mathcal V}(\tr{g_2}^{-1} \Phi_0^2)$
to the geodesic connecting $g_1 A_2^{-1} \tr{g_1}$, $g_2 A_2^{-1} \tr{g_2}$.

\begin{figure}[htbp]
\begin{minipage}{\textwidth}
\begin{center}
\scalebox{0.6}{\includegraphics{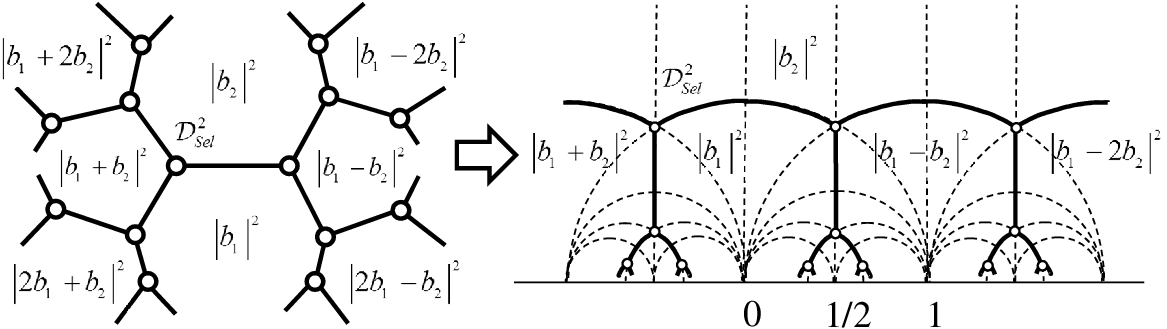}}
\end{center}
\footnotetext{
A one-to-one correspondence between the points of the upper half-plane ${\mathbb H} := \{ z \in \ComplexField: {\rm Im}(z) > 0 \}$ and ${\mathcal S}^2_{\succ 0} / \RealField_{> 0}$ is given by the map:  
	\begin{eqnarray}
		\iota :
		x + \sqrt{-1} y \mapsto
		\begin{bmatrix}
			x^2 + y^2 & x \\
			x & 1
		\end{bmatrix}. \label{eq:map to upper half-plane}
	\end{eqnarray}
 	Under this identification of ${\mathbb H}$ and ${\mathcal S}^2_{\succ 0} / \RealField_{> 0}$,
 	the action of $g \in GL_2(\IntegerRing)$ on the latter set by $S \mapsto g S \tr{g}$ coincides with the action of $GL_2(\IntegerRing)$ on ${\mathbb H}$ by:
 	\begin{eqnarray}
 		\begin{pmatrix}
 			a & b \\
 			c & d
 		\end{pmatrix}
 		\cdot z
 		:= 
 		\begin{cases}
 			\frac{a z + b}{c z + d} & \text{if } a d - b c = 1, \\
 			\frac{a \bar{z} + b}{c \bar{z} + d} & \text{if } a d - b c = -1,
 		\end{cases}
	\end{eqnarray}
 	where $\bar{z}$ is the complex conjugate of $z$.
}
\end{minipage}
\caption{Embedding of a topograph in the upper half plane.}
\label{Tree with action PGL(2, IntegerRing)}
\end{figure}

\end{example}

\begin{example}\label{Topographs in N = 3.}{\bf Case of $CT_{3, (L, B)}$.}
${\mathcal D}_{Sel}^3 = {\mathcal V}(\Phi_0^3)$ is a polyhedral cone surrounded by the six hyperplanes in Table \ref{extreme rays and facets of all the representatives of equivalent V-type domains}.
Hence, a node of $CT_3$ is adjacent to six nodes, as in Figure \ref{Local structure of a topograph CT_3}.
All the adjacent nodes of ${\mathcal V}(\tr{g}^{-1} \Phi_0^2)$ are given as ${\mathcal V}(\tr{g}^{-1} \tau_{ij}^{(3)} \Phi_0^2)$ ($1 \leq i < j \leq 4$), where
${\mathbf e}_4 := -\sum_{i=1}^3 {\mathbf e}_i$ and 
$\tau_{ij}^{(3)}$ is the $3 \times 3$ matrix satisfying:
\begin{eqnarray}
	\tau_{ij}^{(3)} {\mathbf e}_i = - {\mathbf e}_i,\
	\tau_{ij}^{(3)} {\mathbf e}_j = {\mathbf e}_j,\
	k \neq i, j \Longrightarrow \tau_{ij}^{(3)} {\mathbf e}_k = {\mathbf e}_i + {\mathbf e}_k.
\end{eqnarray}

As explained in Figure \ref{Local structure of a topograph CT_3},
$CT_3$ contains two kinds of circuits of length 3 and 6,
which correspond to the following fundamental relations of $GL_3(\IntegerRing)$:
\begin{eqnarray}\label{eq:fundamental relations}
	\tau^{(3)}_{ij} \tau^{(3)}_{km} \tau^{(3)}_{ij} = \sigma_{ik, jm} \in H(A_3),\
	\left( \tau^{(3)}_{ij} \tau^{(3)}_{ik} \tau^{(3)}_{im} \right)^2 = 1,
\end{eqnarray}
where $i, j, k, m$ are integers satisfying $ \{ i, j, k, m \} = \{ 1, 2, 3, 4 \}$,
and $\sigma_{ik, jm} \in H(A_3)$ is the $3 \times 3$ matrix satisfying: 
\begin{eqnarray}
	\sigma_{ik, jm} {\mathbf e}_p = - {\mathbf e}_q
	\text{ for any } (p, q) = (i, k), (k, i), (j, m), (m, j). 
\end{eqnarray}


\begin{figure}[htbp]
\begin{minipage}{\textwidth}
\begin{center}
\scalebox{0.64}{\includegraphics{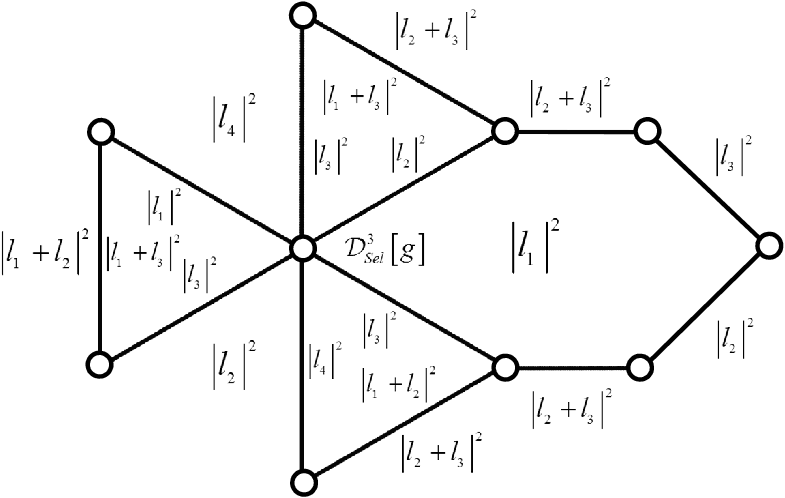}}
\end{center}
\footnotetext{
When we put $(l_1\ l_2\ l_3) := (b_1\ b_2\ b_3) \tr{g}^{-1}$ and $l_4 := - \sum_{i=1}^3 l_i$ for the fixed basis $b_1, b_2, b_3$ and $g \in GL_3(\IntegerRing)$,
each node ${\mathcal D}_{Sel}^3[g] = {\mathcal V}(\tr{g}^{-1} \Phi_0^3)$ is an end point 
of six edges associated with $\{ |l_i|^2, |l_j|^2 \}$ ($1 \leq i < j \leq 4$).
For any $\{ i, j, k, m \} = \{ 1, 2, 3, 4 \}$,
two edges associated with $\{ |l_i|^2, |l_j|^2 \}$, $\{ |l_k|^2, |l_m|^2 \}$
are contained in a circuit of length 3.
On the other hand,
two edges associated with $\{ |l_i|^2, |l_j|^2 \}$, $\{ |l_i|^2, |l_k|^2 \}$
are contained in a circuit of length 6.
}
\end{minipage}
\caption{Local structure of a topograph $CT_{3, (L, (b_1\ b_2\ b_3))}$.}
\label{Local structure of a topograph CT_3}
\end{figure}

\end{example}

\section{Main results for the distribution rules of systematic absences}
\label{Main theorems}

We now discuss on the distribution rules of systematic absences on a topograph.
Those used in our algorithm are described as theorems, whereas other properties important for powder auto-indexing algorithms are mentioned as facts.
As proved in Proposition \ref{prop:basic property of extinction rules1},
there are only a finite number of types of systematic absences,
and they are classified by the types $(G, H)$.

It is not difficult to prove our theorems if $\Gamma_{ext}(G, H)$
is contained in ${\mathcal H}_{G, H}$ in (\ref{eq:definition of {mathcal H}_{G, H}}).
(This always holds for lattices of rank 2.)
Because too many case-by-case considerations are required otherwise,
the most difficult part of the theorems
is confirmed by direct computation, using the International Tables ($N = 2$) and executing a program ($N = 3$).
For confirmation of the case $N = 3$,
we verify that the program outputs exactly the same list as the International Tables.

The most important property of $\Gamma_{ext}(G, H)$ is that $L^*$ is generated by elements of $L^* \setminus \Gamma_{ext}(G, H)$,
under the assumption that $L$ is the period lattice of the periodic function $\wp$.
Therefore, if $L^* \setminus \Gamma_{ext}(G, H)$ holds, the type $(G, H)$ may be regarded to be invalid,
and removed from the following consideration.

\subsection{Cases of rank 2}
\label{Results in N = 2}

According to the International Tables,  
there are 17 wallpaper groups and 72 types of systematic absences, including invalid ones.
By direct computation (or by seeing tables in \cite{Hahn83}), it is verified that $\Gamma_{ext}(G, H) = \Gamma_{ext}(G, \{ id \})$ holds in all the valid cases.

In the following,
we introduce a short proof of Theorem \ref{fact:two-dimensional extinction rules} in the case $H = L$ 
using a topograph.
As a result,  
Theorem \ref{fact:two-dimensional extinction rules} follows for general cases.

\begin{lemma}\label{lem:tau l^* = l^*}
Let $L^*$ be a lattice of rank $2$, and $R \subset O(N)$ be the automorphism group of $L^*$.
If $\tau l_1^* = l_1^*$ holds for some $1 \neq \tau \in R$ and primitive vector of $l_1^* \in L^*$,
either of the following holds:
\begin{enumerate}[(a)]
\item \label{item: l^* cdot l_2^* = 0} 
there exists $l_2^* \in L^*$ such that $l_1^* \cdot l_2^* = 0$ and 
$l_1^*, l_2^*$ is a basis of $L^*$, or

\item \label{item: l^* = l_2^* + tau_1 l_2^*}
$\tau^2 = 1$
and there exists $l_2^* \in L^*$ such that $l_1^* = l_2^* + \tau_1 l_2^*$ and $l_2^*$, $\tau_1 l_2^*$ is a basis of $L^*$.
\end{enumerate}
\end{lemma}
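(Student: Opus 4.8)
The plan is to analyze the orthogonal transformation $\tau$ acting on the rank-2 lattice $L^*$ and fixing the primitive vector $l_1^*$. Since $\tau \in O(2)$ is not the identity but fixes a nonzero vector, $\tau$ must be either a reflection (so $\tau^2 = 1$), or $\tau$ has infinite order or order $\neq 2$ — but any $\tau \in O(2)$ fixing a line pointwise is a reflection across that line, hence $\tau^2 = 1$ automatically. (A rotation fixing a nonzero vector is the identity.) So in fact I expect $\tau$ to always be the reflection through the line $\RealField l_1^*$, and $\tau^2 = 1$ holds in both cases of the lemma; the real content is the dichotomy between (a) and (b). First I would fix a basis: since $l_1^*$ is primitive, extend it to a basis $l_1^*, w$ of $L^*$. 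Writing the action of $\tau$ in this basis, $\tau l_1^* = l_1^*$ forces $\tau$ to have matrix $\begin{pmatrix} 1 & b \\ 0 & -1 \end{pmatrix}$ for some $b \in \IntegerRing$ (the lower-left entry is $0$ because $l_1^*$ is fixed, and the determinant is $-1$ with the eigenvalue $-1$ since $\tau \neq 1$ and $\tau$ is a reflection, forcing the $(2,2)$-entry to be $-1$). Replacing $w$ by $w + k l_1^*$ changes $b$ by $\pm 2k$, so up to change of basis $b \in \{0, 1\}$.

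Next I would split into the two cases $b = 0$ and $b = 1$. If $b = 0$, then $\tau w = -w$, so $\tau$ is the reflection fixing $\RealField l_1^*$ and negating $\RealField w$; since $\tau$ preserves the inner product and $\tau l_1^* = l_1^*$, $\tau w = -w$, we get $l_1^* \cdot w = l_1^* \cdot \tau w = -l_1^* \cdot w$, hence $l_1^* \cdot w = 0$. Taking $l_2^* := w$ gives case (\ref{item: l^* cdot l_2^* = 0}). If $b = 1$, then $\tau w = l_1^* - w$. Set $l_2^* := w$ and $\tau_1 := \tau$; then $l_2^* + \tau_1 l_2^* = w + (l_1^* - w) = l_1^*$, and $l_2^*, \tau_1 l_2^* = w, l_1^* - w$ is a basis of $L^*$ since it is obtained from the basis $l_1^*, w$ by a unimodular transformation. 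Together with $\tau^2 = 1$ this is exactly case (\ref{item: l^* = l_2^* + tau_1 l_2^*}).

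The one point requiring a little care is justifying that the matrix of $\tau$ in the basis $(l_1^*, w)$ really has the claimed shape, i.e. that the only possibilities with $\tau l_1^* = l_1^*$, $\tau \neq 1$, $\tau \in GL_2(\IntegerRing) \cap$ (orthogonal transformations of the given quadratic form) are the two reflection types above. This follows because an element of $O(2)$ fixing a nonzero vector is forced to be a reflection (its other eigenvalue is $-1$), so in any basis adapted to $l_1^*$ the matrix is upper-triangular with diagonal $(1, -1)$; integrality of the matrix then gives $b \in \IntegerRing$. I expect this to be the only mild obstacle; everything else is the two-case bookkeeping above. Finally I would remark that this lemma feeds directly into Theorem \ref{fact:two-dimensional extinction rules}: by Corollary \ref{cor:case of L=H}, a primitive $l_1^* \in \Gamma_{ext}(G) $ satisfies $\tau l_1^* = l_1^*$ for some $1 \neq \tau \in R_G$, and one then checks that case (\ref{item: l^* = l_2^* + tau_1 l_2^*}) cannot occur for extinction vectors (or is handled via the topograph structure of Section \ref{Results in N = 2}), leaving case (\ref{item: l^* cdot l_2^* = 0}), which is the assertion of the theorem.
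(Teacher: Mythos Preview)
Your proof is correct and complete. It is, however, a genuinely different argument from the paper's.

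The paper proceeds via the topograph: it picks $l_2^*$ to be the shortest vector extending $l_1^*$ to a basis (with $l_1^*\cdot l_2^* \ge 0$), reads off the orientation of the topograph edges adjacent to the node corresponding to $\{l_1^*, l_2^*, l_1^*-l_2^*\}$, and then uses that $\tau$ must either fix the ``well'' along the $|l_1^*|^2$-river (forcing $l_1^*\cdot l_2^*=0$, case~(a)) or fix a single node, in which case $\tau$ acts on $\{l_1^*, -l_2^*, -l_1^*+l_2^*\}$ by a permutation coming from the stabilizer $H(A_3)$; the constraints $\tau\neq 1$, $\tau l_1^*=l_1^*$ then force $\tau l_2^* = l_1^* - l_2^*$, which is case~(b).

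Your route is straight linear algebra: observe that any $\tau\in O(2)$ fixing a nonzero vector is a reflection, write its matrix in a basis $(l_1^*, w)$ as $\begin{pmatrix}1 & b\\ 0 & -1\end{pmatrix}$ with $b\in\IntegerRing$, reduce $b$ modulo $2$ by shearing $w\mapsto w+kl_1^*$, and read off the two cases $b=0$ and $b=1$ directly. This is shorter, self-contained, and requires none of the topograph apparatus. The paper's version, by contrast, is deliberately phrased in topograph language because the lemma is being used to motivate the topograph picture of Theorem~\ref{fact:two-dimensional extinction rules}; its value is illustrative rather than economical. Both arguments are valid, and yours would be a perfectly acceptable replacement.
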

\begin{proof}
Fix a Gram matrix $S_0$ of $L^*$.
Let $l_2^* \in L^*$ be the vector satisfying $\{ l_1^*, l_2^* \} \in P_2(L^*)$, $l_1^* \cdot l_2^* \geq 0$ and $\abs{l_2^*}^2 = \min \{ \abs{l_3^*}^2 : \{ l_1^*, l_3^* \} \in P_2(L^*) \}$.
The edges of $CT_{2, S_0}$ then have the direction presented in Figure \ref{Lattice vector fixed by a point group}.
\begin{figure}[htbp]
\begin{center}
\scalebox{0.5}{\includegraphics{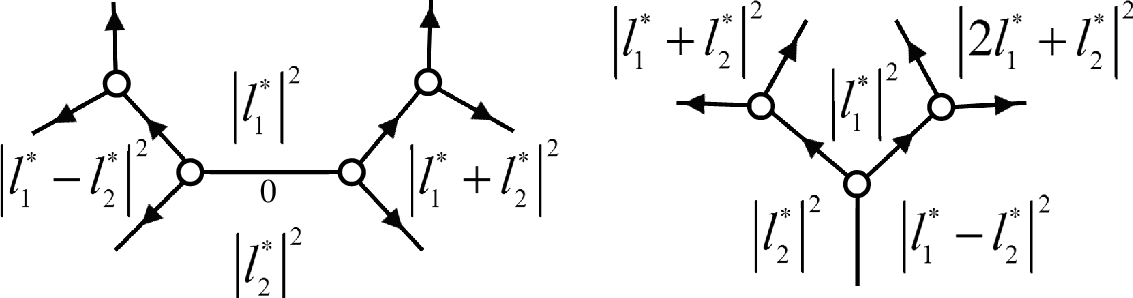}}
\end{center}
\caption{Directions of edges of a topograph associated with $\abs{l_1^*}^2$.}
\label{Lattice vector fixed by a point group}
\end{figure}
The left figure corresponds to the case (\ref{item: l^* cdot l_2^* = 0}).
In the case of the right figure, 
$\tau$ fixes the node surrounded
by $\abs{l_1}^2$, $\abs{l_2}^2$, and $\abs{l_1 - l_2}^2$.
If we recall that the stabilizer of ${\mathcal V}(\Phi_0^3)$ is given by $H(A_3)$,
the action of $\tau$ or $-\tau$ on $\{ l_1^*, -l_2^*, -l_1^* + l_2^* \}$ 
coincides with a permutation of $l_1^*$, $-l_2^*$, $-l_1^* + l_2^*$.
From $\tau \neq 1$ and $\tau l_1^* = l_1^*$, we obtain $\tau l_2^* = l_1^* - l_2^*$.
This is equivalent to the case (\ref{item: l^* = l_2^* + tau_1 l_2^*}).
\end{proof}

\begin{proof}[Proof of Theorem \ref{fact:two-dimensional extinction rules} in the case of $H = L$]
From Corollary \ref{cor:case of L=H}, for any primitive vector $l_1^* \in \Gamma_{ext}(G)$,
there exists $\tau \in R$ with $\tau l_1^* = l_1^*$
such that $\nu_\tau \cdot l_1^* \notin \IntegerRing$.
If (\ref{item: l^* = l_2^* + tau_1 l_2^*}) of Lemma \ref{lem:tau l^* = l^*} holds, then:
\begin{eqnarray}
\nu_\tau \cdot l_1^* = \nu_\tau^{1 + \tau} \cdot l_2^* = \nu_{\tau^2} \cdot l_2^* = \nu_{1} \cdot l_2^* \in \IntegerRing.
\end{eqnarray}
Hence, the statement of the theorem follows.
\end{proof}


\subsection{Cases of rank 3}
\label{Results in N = 3}

For $N = 3$, we consider the following cases separately: 
\begin{enumerate}[(i)]
	\item $\Gamma_{ext}(G, H) \cap P_1(L^*)$ is contained in ${\mathcal H}_{G, H}$
	in (\ref{eq:definition of {mathcal H}_{G, H}});
	\item $\Gamma_{ext}(G, H) \cap P_1(L^*)$ is not contained in ${\mathcal H}_{G, H}$.
\end{enumerate}

Table \ref{Lambda_{ext}(G, H; x) not contained in a union of hyperplanes}
lists all valid types of systematic absences corresponding to the latter case.
For each type,
$\Omega \subset L^* / M L^*$ in Corollary \ref{cor:definition of Omega}
is given in Table \ref{Necessary and sufficient condition for (h, k, l) in Lambda_{ext}(G, H, x)}.

\begin{table}[htbp]
\caption{Types of systematic absences having $\Gamma_{ext}(G, H) \not\subset {\mathcal H}_{G, H}$.}
\label{Lambda_{ext}(G, H; x) not contained in a union of hyperplanes}
\begin{minipage}{\textwidth}
\begin{footnotesize}
\begin{tabular}{p{28mm}p{5mm}p{15mm}p{23mm}p{5mm}p{15mm}p{22mm}p{5mm}p{18mm}}
	\hline
Space group $G$ (No.${}^a$) & $H{}^b$ & $(x, y, z){}^c$ \\
\multicolumn{3}{l}{{\bf A (Face-centered lattice)}}					&	\multicolumn{3}{l}{{\bf B (Body-centered lattice)}}					&	$P\ \bar{4}\ 3\ n$ (218)	&	$C_2$	&	$(x, 0, \frac{1}{2})$	\\
$F\ d\ d\ 2$  (43)	&	$C_2$	&	$(0, 0, z)$	&	$I\ 4_1/a$  (88)	&	$C_i$	&	$(0, \frac{1}{4}, \frac{1}{8})$	&	$P\ m\ \bar{3}\ n$ (223)	&	$C_2$	&	$(\frac{1}{4}, y, y+\frac{1}{2})$	\\
$F\ d\ d\ d$  (70)	&	$C_2$	&	$(x, 0, 0)$	&	$I\ 4_1/a$  (88)	&	$C_i$	&	$(\frac{1}{4}, 0, \frac{3}{8})$	&	$P\ m\ \bar{3}\ n$ (223)	&	$C_{2v}$	&	$(x, \frac{1}{2}, 0)$	\\
$F\ d\ d\ d$  (70)	&	$D_2$	&	$(0, 0, 0)$	&	$I\ 4_1/a\ m\ d$ (141)	&	$C_{2h}$	&	$(0, \frac{1}{4}, \frac{1}{8})$	&	$P\ m\ \bar{3}\ n$ (223)	&	$C_{2v}$	&	$(x, 0, \frac{1}{2})$	\\
$F\ d\ d\ d$  (70)	&	$D_2$	&	$(\frac{1}{2}, \frac{1}{2}, \frac{1}{2})$	&	$I\ 4_1/a\ m\ d$ (141)	&	$C_{2h}$	&	$(0, \frac{1}{4}, \frac{5}{8})$	&	\multicolumn{3}{l}{{\bf F (Body-centered)}}	\\				
$F\ d\ \bar{3}$ (203)	&	$C_2$	&	$(x, 0, 0)$	&	\multicolumn{3}{l}{{\bf C}}					&	$I\ \bar{4}\ 3\ d$ (220)	&	$C_3$	&	$(x, x, x)$	\\
$F\ d\ \bar{3}$ (203)	&	$T$	&	$(0, 0, 0)$	&	$I\ 4_1/a\ m\ d$ (141)	&	$C_2$	&	$(x, \frac{1}{4}, \frac{1}{8})$	&	$I\ a\ \bar{3}\ d$ (230)	&	$C_3$	&	$(x, x, x)$	\\
$F\ d\ \bar{3}$ (203)	&	$T$	&	$(\frac{1}{2}, \frac{1}{2}, \frac{1}{2})$	&	$I\ 4_1/a\ c\ d$ (142)	&	$C_2$	&	$(\frac{1}{4}, y, \frac{1}{8})$	&	{\bf G}	\\				
$F\ 4_1\ 3\ 2$ (210)	&	$C_2$	&	$(x, 0, 0)$	&	\multicolumn{3}{l}{{\bf D}}					&	$P\ 4_2\ 3\ 2$ (208)	&	$D_2$	&	$(\frac{1}{4}, 0, \frac{1}{2})$	\\
$F\ 4_1\ 3\ 2$ (210)	&	$T$	&	$(0, 0, 0)$	&	$P\ 3\ 1\ c$ (159)	&	$C_3$	&	$(\frac{1}{3}, \frac{2}{3}, z)$	&	$P\ 4_2\ 3\ 2$ (208)	&	$D_2$	&	$(\frac{1}{4}, \frac{1}{2}, 0)$	\\
$F\ 4_1\ 3\ 2$ (210)	&	$T$	&	$(\frac{1}{2}, \frac{1}{2}, \frac{1}{2})$	&	$P\ \bar{3}\ 1\ c$ (163)	&	$C_3$	&	$(\frac{1}{3}, \frac{2}{3}, z)$	&	$P\ \bar{4}\ 3\ n$ (218)	&	$S_4$	&	$(\frac{1}{4}, 0, \frac{1}{2})$	\\
$F\ d\ \bar{3}\ m$ (227)	&	$C_{2v}$	&	$(x, 0, 0)$	&	$P\ \bar{3}\ 1\ c$ (163)	&	$D_3$	&	$(\frac{2}{3}, \frac{1}{3}, \frac{1}{4})$	&	$P\ \bar{4}\ 3\ n$ (218)	&	$S_4$	&	$(\frac{1}{4}, \frac{1}{2}, 0)$	\\
$F\ d\ \bar{3}\ m$ (227)	&	$T_d$	&	$(0, 0, 0)$	&	$P\ \bar{3}\ 1\ c$ (163)	&	$D_3$	&	$(\frac{1}{3}, \frac{2}{3}, \frac{1}{4})$	&	$P\ m\ \bar{3}\ n$ (223)	&	$D_{2d}$	&	$(\frac{1}{4}, 0, \frac{1}{2})$	\\
$F\ d\ \bar{3}\ m$ (227)	&	$T_d$	&	$(\frac{1}{2}, \frac{1}{2}, \frac{1}{2})$	&	$P\ 6_3$ (173)	&	$C_3$	&	$(\frac{1}{3}, \frac{2}{3}, z)$	&	$P\ m\ \bar{3}\ n$ (223)	&	$D_{2d}$	&	$(\frac{1}{4}, \frac{1}{2}, 0)$	\\
\multicolumn{3}{l}{{\bf A (Body-centered lattice)}}					&	$P\ 6_3/m$ (176)	&	$C_3$	&	$(\frac{1}{3}, \frac{2}{3}, z)$	&	{\bf H}	\\				
$I\ 4_1$  (80)	&	$C_2$	&	$(0, 0, z)$	&	$P\ 6_3/m$ (176)	&	$C_{3h}$	&	$(\frac{2}{3}, \frac{1}{3}, \frac{1}{4})$	&	$P\ 4_3\ 3\ 2$ (212)	&	$D_3$	&	$(\frac{1}{8}, \frac{1}{8}, \frac{1}{8})$	\\
$I\ 4_1/a$  (88)	&	$C_2$	&	$(0, 0, z)$	&	$P\ 6_3/m$ (176)	&	$C_{3h}$	&	$(\frac{1}{3}, \frac{2}{3}, \frac{1}{4})$	&	$P\ 4_3\ 3\ 2$ (212)	&	$D_3$	&	$(\frac{5}{8}, \frac{5}{8}, \frac{5}{8})$	\\
$I\ 4_1/a$  (88)	&	$S_4$	&	$(0, 0, 0)$	&	$P\ 6_3\ 2\ 2$ (182)	&	$C_3$	&	$(\frac{1}{3}, \frac{2}{3}, z)$	&	$P\ 4_1\ 3\ 2$ (213)	&	$D_3$	&	$(\frac{3}{8}, \frac{3}{8}, \frac{3}{8})$	\\
$I\ 4_1/a$  (88)	&	$S_4$	&	$(0, 0, \frac{1}{2})$	&	$P\ 6_3\ 2\ 2$ (182)	&	$D_3$	&	$(\frac{2}{3}, \frac{1}{3}, \frac{1}{4})$	&	$P\ 4_1\ 3\ 2$ (213)	&	$D_3$	&	$(\frac{7}{8}, \frac{7}{8}, \frac{7}{8})$	\\
$I\ 4_1\ 2\ 2$  (98)	&	$C_2$	&	$(0, 0, z)$	&	$P\ 6_3\ 2\ 2$ (182)	&	$D_3$	&	$(\frac{1}{3}, \frac{2}{3}, \frac{1}{4})$	&	{\bf I}	\\				
$I\ 4_1\ 2\ 2$  (98)	&	$D_2$	&	$(0, 0, 0)$	&	$P\ 6_3\ m\ c$ (186)	&	$C_{3v}$	&	$(\frac{1}{3}, \frac{2}{3}, z)$	&	$I\ 4_1\ 3\ 2$ (214)	&	$D_3$	&	$(\frac{1}{8}, \frac{1}{8}, \frac{1}{8})$	\\
$I\ 4_1\ 2\ 2$  (98)	&	$D_2$	&	$(0, 0, \frac{1}{2})$	&	$P\ \bar{6}\ 2\ c$ (190)	&	$C_3$	&	$(\frac{1}{3}, \frac{2}{3}, z)$	&	$I\ 4_1\ 3\ 2$ (214)	&	$D_3$	&	$(\frac{7}{8}, \frac{7}{8}, \frac{7}{8})$	\\
$I\ 4_1\ m\ d$ (109)	&	$C_{2v}$	&	$(0, 0, z)$	&	$P\ \bar{6}\ 2\ c$ (190)	&	$C_{3h}$	&	$(\frac{2}{3}, \frac{1}{3}, \frac{1}{4})$
	&	{\bf J1}	\\				
$I\ \bar{4}\ 2\ d$ (122)	&	$C_2$	&	$(0, 0, z)$	&	$P\ \bar{6}\ 2\ c$ (190)	&	$C_{3h}$	&	$(\frac{1}{3}, \frac{2}{3}, \frac{1}{4})$
	&	$I\ 4_1\ 3\ 2$ (214)	&	$D_2$	&	$(\frac{1}{8}, 0, \frac{1}{4})$	\\
$I\ \bar{4}\ 2\ d$ (122)	&	$S_4$	&	$(0, 0, 0)$	&	$P\ 6_3/m\ m\ c$ (194)	&	$C_{3v}$	&	$(\frac{1}{3}, \frac{2}{3}, z)$
	&	$I\ 4_1\ 3\ 2$ (214)	&	$D_2$	&	$(\frac{5}{8}, 0, \frac{1}{4})$	\\
$I\ \bar{4}\ 2\ d$ (122)	&	$S_4$	&	$(0, 0, \frac{1}{2})$	&	$P\ 6_3/m\ m\ c$ (194)	&	$D_{3h}$	&	$(\frac{2}{3}, \frac{1}{3}, \frac{1}{4})$
	&	{\bf J2}	\\
$I\ 4_1/a\ m\ d$ (141)	&	$C_2$	&	$(x, x, 0)$	&	$P\ 6_3/m\ m\ c$ (194)	&	$D_{3h}$	&	$(\frac{1}{3}, \frac{2}{3}, \frac{1}{4})$
	&	$I\ \bar{4}\ 3\ d$ (220)	&	$S_4$	&	$(\frac{7}{8}, 0, \frac{1}{4})$	\\
$I\ 4_1/a\ m\ d$ (141)	&	$C_{2v}$	&	$(0, 0, z)$	&	\multicolumn{3}{l}{{\bf E}}				
	&	$I\ \bar{4}\ 3\ d$ (220)	&	$S_4$	&	$(\frac{3}{8}, 0, \frac{1}{4})$	\\
$I\ 4_1/a\ m\ d$ (141)	&	$D_{2d}$	&	$(0, 0, 0)$	&	$P\ 6_2$ (171)	&	$C_2$	&	$(\frac{1}{2}, \frac{1}{2}, z)$
	&	{\bf K}	\\				
$I\ 4_1/a\ m\ d$ (141)	&	$D_{2d}$	&	$(0, 0, \frac{1}{2})$	&	$P\ 6_4$ (172)	&	$C_2$	&	$(\frac{1}{2}, \frac{1}{2}, z)$
	&	$I\ 4_1\ 3\ 2$ (214)	&	$C_2$	&	$(x, 0, \frac{1}{4})$	\\
$I\ 4_1/a\ c\ d$ (142)	&	$C_2$	&	$(x, x, \frac{1}{4})$	&	$P\ 6_2\ 2\ 2$ (180)	&	$C_2$	&	$(\frac{1}{2}, 0, z)$
	&	$I\ \bar{4}\ 3\ d$ (220)	&	$C_2$	&	$(x, 0, \frac{1}{4})$	\\
\multicolumn{3}{l}{{\bf B (Face-centered lattice)}}					&	$P\ 6_2\ 2\ 2$ (180)	&	$D_2$	&	$(\frac{1}{2}, 0, 0)$
	&	$I\ a\ \bar{3}\ d$ (230)	&	$C_2$	&	$(\frac{1}{8}, y, -y+\frac{1}{4})$	\\
$F\ d\ d\ d$  (70)	&	$C_i$	&	$(\frac{1}{8}, \frac{1}{8}, \frac{1}{8})$	&	$P\ 6_2\ 2\ 2$ (180)	&	$D_2$	&	$(\frac{1}{2}, 0, \frac{1}{2})$
	&	{\bf L}	\\				
$F\ d\ d\ d$  (70)	&	$C_i$	&	$(\frac{5}{8}, \frac{5}{8}, \frac{5}{8})$	&	$P\ 6_4\ 2\ 2$ (181)	&	$C_2$	&	$(\frac{1}{2}, 0, z)$
	&	$I\ a\ \bar{3}\ d$ (230)	&	$C_2$	&	$(x, 0, \frac{1}{4})$	\\
$F\ d\ \bar{3}$ (203)	&	$C_{3i}$	&	$(\frac{1}{8}, \frac{1}{8}, \frac{1}{8})$	&	$P\ 6_4\ 2\ 2$ (181)	&	$D_2$	&	$(\frac{1}{2}, 0, 0)$
	&	{\bf M}	\\				
$F\ d\ \bar{3}$ (203)	&	$C_{3i}$	&	$(\frac{5}{8}, \frac{5}{8}, \frac{5}{8})$	&	$P\ 6_4\ 2\ 2$ (181)	&	$D_2$	&	$(\frac{1}{2}, 0, \frac{1}{2})$
	&	$I\ a\ \bar{3}\ d$ (230)	&	$D_2$	&	$(\frac{1}{8}, 0, \frac{1}{4})$	\\
$F\ 4_1\ 3\ 2$ (210)	&	$D_3$	&	$(\frac{1}{8}, \frac{1}{8}, \frac{1}{8})$	&	\multicolumn{3}{l}{{\bf F (Primitive)}}				
	&	$I\ a\ \bar{3}\ d$ (230)	&	$S_4$	&	$(\frac{3}{8}, 0, \frac{1}{4})$	\\
$F\ 4_1\ 3\ 2$ (210)	&	$D_3$	&	$(\frac{5}{8}, \frac{5}{8}, \frac{5}{8})$	&	$P\ 4_2\ 3\ 2$ (208)	&	$C_2$	&	$(x, \frac{1}{2}, 0)$
	&	{\bf N}	\\				
$F\ d\ \bar{3}\ m$ (227)	&	$D_{3d}$	&	$(\frac{1}{8}, \frac{1}{8}, \frac{1}{8})$
	&	$P\ 4_2\ 3\ 2$ (208)	&	$C_2$	&	$(x, 0, \frac{1}{2})$	
	&	$I\ a\ \bar{3}\ d$ (230)	&	$D_3$	&	$(\frac{1}{8}, \frac{1}{8}, \frac{1}{8})$	\\
$F\ d\ \bar{3}\ m$ (227)	&	$D_{3d}$	&	$(\frac{5}{8}, \frac{5}{8}, \frac{5}{8})$
	&	$P\ \bar{4}\ 3\ n$ (218)	&	$C_2$	&	$(x, \frac{1}{2}, 0)$
\end{tabular}
\end{footnotesize}
\footnotetext[1]{Number assigned to every space group in \cite{Hahn83}.}
\footnotetext[2]{The isomorphism class of $H$}
\footnotetext[3]{$(x, y, z) \in \RealField^N/L$
corresponding to all the elements of $(\RealField^3)^H$.
}
\end{minipage}
\end{table}

\begin{table}[htbp]
\caption{Necessary and sufficient condition$^a$ for $\sum_{i=1}^3 u_i l_i^* \notin {\mathcal H}_{G, H}$ to belong to $\Gamma_{ext}(G, H)$.}
\label{Necessary and sufficient condition for (h, k, l) in Lambda_{ext}(G, H, x)}
\begin{minipage}{\textwidth}
\begin{footnotesize}
\begin{center}
\begin{tabular}{cl}
\hline
Type & Necessary and sufficient condition \\
A & \begin{tabular}{lcl}
		face-centered & $\Longrightarrow$ & $u_1 + u_2 + u_3 \equiv 2$ mod 4 \\
		body-centered & $\Longrightarrow$ & $2 u_2 + u_3 \equiv 2$ mod 4
	\end{tabular} \\
B & \begin{tabular}{lcl}
		face-centered & $\Longrightarrow$ & $\{ u_1, u_2, u_3 \} \equiv \{ 0, 0, 2 \}, \{ 0, 2, 2 \}$ mod 4 \\
		body-centered & $\Longrightarrow$ & 
		$\{ u_1, u_2, u_3 \} \equiv \{ 0, 0, 2\}, \{ 2, 2, 2\}$ mod 4 or
		$( u_1, u_2, u_3 ) \equiv (1, 1, 0)$ mod 2
	\end{tabular} \\
C & $(u_1, u_2, u_3) \equiv (1, 1, 0)$ mod 2.  \\
D & $u_1 \equiv u_2$ mod 3, $u_3 \equiv 1$ mod 2. \\
E & $u_1, u_2 \equiv 0$ mod 2, $u_3 \not\equiv 0$ mod 3. \\
F & \begin{tabular}{lcl}
		primitive & $\Longrightarrow$ & $\{ u_1, u_2, u_3 \} \equiv \{ 1, 1, 1 \}$ mod 2 \\
		body-centered & $\Longrightarrow$ & $\{ u_1, u_2, u_3 \} \equiv \{ 0, 0, 2 \}, \{ 2, 2, 2 \}$ mod 4
	\end{tabular} \\
G & $\{ u_1, u_2, u_3 \}
		\begin{cases}
			\equiv \{ 0, 0, 1 \}, \{ 1, 1, 1 \} \text{ mod } 2, \text{ and} \\
			\not\equiv \{ 0, 1, 2 \}, \{ 0, 2, 3 \} \text{ mod } 4.
		\end{cases}$ \\
H & $\{ u_1, u_2, u_3 \}
		\begin{cases}
			\equiv  \{ 0, 0, 0 \}, \{ 0, 0, 1 \} \text{ mod } 2, \text{ and} \\
			\not\equiv \{ 0, 1, 2 \}, \{ 0, 2, 3 \}, \{ 0, 0, 0 \}, \{ 2, 2, 2 \} \text{ mod } 4.
		\end{cases}$ \\
I & $\{ u_1, u_2, u_3 \} \equiv \{ 0, 0, 2 \}, \{ 0, 2, 2 \} \text{ mod } 4$. \\
J1 & $\{ u_1, u_2, u_3 \}
		\begin{cases}
			\not\equiv \{ 0, 0, 0 \}, \{ 0, 2, 2 \}, \{ 1, 1, 2 \}, \{ 1, 2, 3 \}, \{ 2, 3, 3 \} \text{ mod } 4, \text{ and} \\
			\not\equiv \{ 0, 2, 4 \}, \{ 0, 4, 6 \}, \{ 0, 1, 1 \}, \{ 1, 1, 4 \}, \{ 0, 3, 3 \},
			\{ 3, 3, 4 \}, \{ 0, 1, 7 \}, \{ 1, 4, 7 \}, \{ 0, 3, 5 \}, \{ 3, 4, 5 \} \text{ mod } 8.
		\end{cases}$ \\
J2 & $\{ u_1, u_2, u_3 \}
		\begin{cases}
			\not\equiv \{ 0, 0, 0 \}, \{ 0, 2, 2 \}, \{ 1, 1, 2 \}, \{ 1, 2, 3 \}, \{ 2, 3, 3 \} \text{ mod } 4, \text{ and} \\
			\not\equiv \{ 0, 2, 4 \}, \{ 0, 4, 6 \}, \{ 0, 1, 3 \}, \{ 1, 3, 4 \}, \{ 0, 1, 5 \},
			\{ 1, 4, 5 \}, \{ 0, 3, 7 \}, \{ 3, 4, 7 \}, \{ 0, 5, 7 \}, \{ 4, 5, 7 \} \text{ mod } 8.
		\end{cases}$ \\
K & $\{ u_1, u_2, u_3 \} \equiv \{ 2, 2, 2 \} \text{ mod } 4$. \\
L & $\{ u_1, u_2, u_3 \} \equiv \{ 0, 1, 1 \}, \{ 0, 1, 3 \}, \{ 0, 3, 3 \}, \{ 2, 2, 2 \} \text{ mod } 4$. \\
M & $\{ u_1, u_2, u_3 \} 
		\begin{cases}
			\not\equiv \{ 0, 0, 0 \}, \{ 0, 2, 2 \}, \{ 1, 1, 2 \}, \{ 1, 2, 3 \}, \{ 2, 3, 3 \} \text{ mod } 4, \text{ and} \\
			\not\equiv \{ 0, 2, 4 \}, \{ 0, 4, 6 \} \text{ mod } 8.
		\end{cases}$ \\
N & $\{ u_1, u_2, u_3 \}
			\not\equiv \{ 0, 0, 0 \}, \{ 1, 1, 2 \}, \{ 1, 2, 3 \}, \{ 2, 3, 3 \} \text{ mod } 4$.
\end{tabular}
\end{center}
\end{footnotesize}
\footnotetext[1]{$\langle \l_1^*, l_2^*, l_3^* \rangle$ is the reciprocal basis of 
the basis $l_1, l_2, l_3$ of $\RealField^3$ taken as in the footnote of Table \ref{Lambda_{ext}(G, H; x) not contained in a union of hyperplanes}.
}
\end{minipage}
\end{table}

First, we shall explain some more details of Ito's method, and determine why it does not work appropriately
for some types of systematic absences. 
It was Ito \cite{Ito49}
who first proposed using the parallelogram law for powder auto-indexing;
if the observed $q$-values $q_1, q_2, q_3, q_4 \in \Lambda^{obs}$ satisfy $2(q_1 + q_2) = q_3 + q_4$,
the method assumes that 
there exist $l_1^*, l_2^* \in L^*$ satisfying the following:
\begin{eqnarray}\label{eq:definition of zone}
	q_1 = \abs{l_1^*}^2,\ 
	q_2 = \abs{l_2^*}^2,\
	q_3 = \abs{l_1^*+l_2^*}^2,\ 
	q_4 = \abs{l_1^*-l_2^*}^2.
\end{eqnarray}
If (\ref{eq:definition of zone}) is true, the Gram matrix of the sublattice expanded by $l_1^*, l_2^*$ is determined.
In order to obtain candidate solutions for $L^*$,
it is necessary to construct a $3 \times 3$ Gram matrix
from combinations of these sublattices.
In order to simplify the combination procedure,
it is very desirable that $\{ l_1^*, l_2^* \}$ in (\ref{eq:definition of zone}) is a primitive set of $L^*$.
Otherwise, it is necessary 
to check whether $L^*$ has rank 3 sublattices that are more plausible as a solution once $L^*$ is obtained.  
The program of Visser \cite{Visser69} which adopted Ito's method also implicitly requires $\{ l_1^*, l_2^* \} \in P_2(L^*)$ \cite{Wolff58}.

However, according to the following fact, in some types of systematic absences,
$\{ l_1^*, l_2^* \} \subset L^*$ is never a primitive set of $L^*$,
if $\{ l_1^*, l_2^* \}$ satisfies (\ref{eq:definition of zone}). 
\begin{fact}\label{fact:no Ito's equation}
	If $(G, H)$ is of the category B or N,
	there exists no primitive set $\{ l_1^*, l_2^* \}$ of $L^*$ 
	such that none of $l_1^*$, $l_2^*$, $l_1^* \pm l_2^*$ belong to $\Gamma_{ext}(G, H)$.
\end{fact}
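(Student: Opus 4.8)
The plan is to reduce Fact~\ref{fact:no Ito's equation} to a finite statement about residue classes modulo $4$ and then settle it by a short case analysis in which primitivity does all the work. The first step is to observe that for every type listed under categories B and N in Table~\ref{Lambda_{ext}(G, H; x) not contained in a union of hyperplanes} one has ${\mathcal H}_{G, H} = \emptyset$: the subgroups $R_H$ occurring there are $C_i$, $C_{3i}$, $D_3$ and $D_{3d}$, and the first, second and fourth contain $-1_N$ while $D_3$ has no non-zero fixed vector in $\RealField^3$, so in each case $\sum_{\tau \in R_H}\tau = 0$ as a matrix and the union defining ${\mathcal H}_{G, H}$ in (\ref{eq:definition of {mathcal H}_{G, H}}) is empty. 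By Corollary~\ref{cor:definition of Omega} it follows that, for \emph{every} $l^* \in L^*$, membership of $l^*$ in $\Gamma_{ext}(G, H, x + V^{\sum_{R_H}})$ is governed by the congruence conditions of Table~\ref{Necessary and sufficient condition for (h, k, l) in Lambda_{ext}(G, H, x)} on the coordinate vector $(u_1, u_2, u_3)$ of $l^*$ relative to the basis $l_1^*, l_2^*, l_3^*$ of $L_2^*$. I would also record that, inside $L_2^* \cong \IntegerRing^3$, one has $L^* = \{u : u_1 \equiv u_2 \equiv u_3 \bmod 2\}$ when $L$ is face-centered and $L^* = \{u : u_1 + u_2 + u_3 \equiv 0 \bmod 2\}$ when $L$ is body-centered.

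The second step is to list, in each of the cases B (face-centered), B (body-centered) and N, the primitive vectors of $L^*$ lying outside $\Gamma_{ext}(G, H, x + V^{\sum_{R_H}})$, simply by running through the admissible mod-$4$ patterns of Table~\ref{Necessary and sufficient condition for (h, k, l) in Lambda_{ext}(G, H, x)} and throwing away those that force the vector to be divisible by $2$ inside $L^*$. The upshot is that the non-extinct primitive vectors are confined to very few congruence classes: for B (face-centered) they are exactly the vectors with all three coordinates odd; for N they are exactly the vectors with two odd coordinates and the remaining coordinate congruent to $2$ modulo $4$; B (body-centered) gives a similarly short list of mod-$4$ patterns, each of the same flavour.

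The third step is the contradiction. Suppose $\{l_1^*, l_2^*\} \in P_2(L^*)$ and none of $l_1^*, l_2^*, l_1^* \pm l_2^*$ lies in $\Gamma_{ext}(G, H, x + V^{\sum_{R_H}})$. Extending $\{l_1^*, l_2^*\}$ to a basis of $L^*$ shows that the images of $l_1^*$ and $l_2^*$ in $L^*/2L^*$ are linearly independent over $\mathbb{F}_2$, so in particular $l_1^* \not\equiv l_2^* \bmod 2L^*$; combined with the classification of the previous step this pins down the mod-$2$, and where necessary mod-$4$, classes of $l_1^*$ and $l_2^*$. I then exploit the identity $(l_1^* + l_2^*) + (l_1^* - l_2^*) = 2\,l_1^*$ together with the requirement that both $l_1^* \pm l_2^*$ again avoid $\Gamma_{ext}$: in case B (face-centered) this forces $\tfrac{1}{2}(l_1^* - l_2^*) \in L^*$, i.e. $l_1^* \equiv l_2^* \bmod 2L^*$, contradicting primitivity; in the cases B (body-centered) and N the same manipulation forces some coordinate of $l_1^* \pm l_2^*$ into two incompatible residue classes modulo $4$.

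The main obstacle is exactly this last step for the body-centered subcase of category B and for the single group $I\ a\ \bar{3}\ d$ of category N: there the obstruction genuinely lives modulo $4$ rather than modulo $2$, so one must split into the handful of possible mod-$2$ configurations of $\{l_1^*, l_2^*\}$ and carry out the short mod-$4$ bookkeeping in each (in particular, distinguishing the case $l_1^* \equiv l_2^* \bmod 2\IntegerRing^3$, handled through divisibility by $4$, from the case $l_1^* \not\equiv l_2^* \bmod 2\IntegerRing^3$). This is precisely the finite verification that, as remarked at the start of Section~\ref{Main theorems}, can equivalently be confirmed by a program checking the output against the International Tables.
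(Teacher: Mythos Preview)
The paper gives no proof of Fact~\ref{fact:no Ito's equation}; it is simply stated, with the understanding (see the opening of Section~\ref{Main theorems}) that such statements are verified by direct computation against the International Tables or by a program. Your proposal supplies exactly that direct verification, organised more conceptually than a raw machine check.

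Your key simplification---that ${\mathcal H}_{G,H} = \emptyset$ for every type in categories B and N, because in each case $\sum_{\tau\in R_H}\tau = 0$ as a matrix---is correct and is what makes the argument tractable by hand. One small omission: in category B (body-centered) the group $R_H = C_{2h}$ also occurs (for $I\,4_1/a\,m\,d$), which you did not list; but since $C_{2h}$ contains $-1_N$ your argument covers it without change. With ${\mathcal H}_{G,H}=\emptyset$, Corollary~\ref{cor:definition of Omega} indeed reduces membership in $\Gamma_{ext}$ to the mod-$4$ conditions of Table~\ref{Necessary and sufficient condition for (h, k, l) in Lambda_{ext}(G, H, x)} for \emph{every} $l^*\in L^*$, and your identification of $L^*$ inside $L_2^*\cong\IntegerRing^3$ by the face-centered and body-centered parity constraints is the right setup.

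The case analysis you carry out for B (face-centered) is complete and correct: non-extinct primitive vectors have all coordinates odd, so $l_1^*\pm l_2^*$ are all-even, and forcing both to be non-extinct puts one of them in $2L^*$, contradicting the linear independence of $l_1^*,l_2^*$ in $L^*/2L^*$. For N and for B (body-centered) you have correctly identified that the obstruction is genuinely mod~$4$ and outlined the remaining finite split; this residual bookkeeping is exactly the ``program check'' the paper defers to, so your sketch is at the same level of rigour the paper itself adopts. In short, your proposal is sound and in fact more explicit than what the paper provides.
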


To remove adverse effects of systematic absences from Ito's algorithm, it has been proposed that a formula other than the parallelogram law 
should be used \cite{Wolff57}:
\begin{eqnarray}
	\abs{l_1^* + m l_2^*}^2 - \abs{l_1^* - m l_2^*}^2 = m (\abs{l_1^* + l_2^*}^2 - \abs{l_1^* - l_2^*}^2).
\end{eqnarray}
However, it has not been ascertained whether the equation works appropriately for all types of systematic absences.
As another example, the following was proposed to obtain a rank 3 solution directly.
\begin{eqnarray} 
	\abs{l_1^*}^2 + \abs{l_2^*}^2 + \abs{l_3^*}^2 + \abs{l_1^* + l_2^* + l_3^*}^2
	=
	\abs{l_1^* + l_2^*}^2 + \abs{l_1^* + l_3^*}^2 + \abs{l_2^* + l_3^*}^2.
\end{eqnarray} 

The above equation has a similar property to the parallelogram law:
\begin{fact}
	If $(G, H)$ is of the category B, C, F, G, or N,
	there exists no basis $\langle l_1^*, l_2^*, l_3^* \rangle$ of $L^*$ 
	such that none of $l_1^*$, $l_2^*$, $l_3^*$, $l_1^*+l_2^*$, $l_1^*+l_3^*$, $l_2^*+l_3^*$, 
	$l_1^*+l_2^*+l_3^*$ belong to $\Gamma_{ext}(G, H)$.
\end{fact}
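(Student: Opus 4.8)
The plan is to reduce the statement to a finite arithmetic check modulo a small integer, of the same kind that produced Table~\ref{Necessary and sufficient condition for (h, k, l) in Lambda_{ext}(G, H, x)}. Observe first that the seven vectors in the statement are exactly the nonzero elements of $\{ \epsilon_1 l_1^* + \epsilon_2 l_2^* + \epsilon_3 l_3^* : \epsilon_i \in \{ 0, 1 \} \}$, the set of vectors entering the rank-$3$ analogue $\abs{l_1^*}^2 + \abs{l_2^*}^2 + \abs{l_3^*}^2 + \abs{l_1^* + l_2^* + l_3^*}^2 = \abs{l_1^* + l_2^*}^2 + \abs{l_1^* + l_3^*}^2 + \abs{l_2^* + l_3^*}^2$ of the parallelogram law. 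For each of the categories B, C, F, G, N, Corollary~\ref{cor:definition of Omega} gives an integer $M$ (with $M \le 4$) and a set $\Omega \subset L^*/ M L^*$ such that a primitive $l^* \in L^* \setminus \mathcal{H}_{G, H}$ lies in $\Gamma_{ext}(G, H, U)$ if and only if $l^* + M L^* \in \Omega$, and Table~\ref{Necessary and sufficient condition for (h, k, l) in Lambda_{ext}(G, H, x)} records $\Omega$ explicitly as a congruence on the coordinates $(u_1, u_2, u_3)$ of $l^*$ with respect to the reciprocal basis $l_1^*, l_2^*, l_3^*$ of the standard Bravais basis $l_1, l_2, l_3$. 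Hence it suffices to show: for every basis of $L^*$, at least one of the seven coordinate triples obtained from it (the three basis vectors and their four sums) satisfies the congruence in Table~\ref{Necessary and sufficient condition for (h, k, l) in Lambda_{ext}(G, H, x)}.

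The second step makes this finite. The translation lattice $L$ contains the Bravais lattice $L_2 = \langle l_1, l_2, l_3 \rangle_\IntegerRing$ with index $1$, $2$ or $4$ (the centring), so $L^* \subseteq L_2^*$ is cut out by simple congruences on $(u_1, u_2, u_3)$ modulo $2$, and $[L_2^* : L^*] = [L : L_2]$. A basis of $L^*$ thus reduces, modulo $M$, to a $3 \times 3$ integer matrix whose columns obey those congruences and whose elementary divisors over $\IntegerRing$ are those of the inclusion $L^* \hookrightarrow L_2^*$; only finitely many such reductions occur, and since the conditions in Table~\ref{Necessary and sufficient condition for (h, k, l) in Lambda_{ext}(G, H, x)} for these five categories are invariant under permutation of $u_1, u_2, u_3$, the number of cases to inspect collapses to a handful. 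Category C illustrates the mechanism: there $[L_2^* : L^*] = 2$, so any basis of $L^*$ reduces mod $2$ to a rank-$2$ matrix whose column span is forced to equal the even-weight plane $E \subset \mathbb{F}_2^3$; consequently all three nonzero vectors of $E$, in particular the extinction triple $(1,1,0)$, occur among the seven combinations. For B, F, G, N one runs the same argument modulo $4$ -- or modulo $2$ with $GL_3(\IntegerRing)$-bases when the centring is primitive, where the column span is all of $\mathbb{F}_2^3$ -- checking case by case that one of the seven combinations always lands in $\Omega$.

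The single delicate point is the exceptional locus $\mathcal{H}_{G, H}$, on which Corollary~\ref{cor:definition of Omega} is silent: one of the seven vectors might land there, where the residue criterion does not apply. I would dispose of this directly, since $\mathcal{H}_{G, H}$ is a finite union of $\IntegerRing$-rational hyperplanes: for each category, enumerate the ways a basis vector or a sum can lie on $\mathcal{H}_{G, H}$, decide its membership in $\Gamma_{ext}(G, H, U)$ from Proposition~\ref{prop:basic property of extinction rules2}, and verify that this never yields an escaping basis -- in each case either the six vectors off $\mathcal{H}_{G, H}$ already force one into $\Omega$, or the vector on $\mathcal{H}_{G, H}$ is itself in $\Gamma_{ext}(G, H, U)$. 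The main obstacle is precisely this bookkeeping: pinning down, for every centring occurring in B, C, F, G, N, the correct mod-$M$ model of the bases of $L^*$ (congruences together with the elementary-divisor constraint of $L^* \hookrightarrow L_2^*$) and confirming that $\mathcal{H}_{G, H}$ adds no freedom. With those two ingredients in hand, what remains is a short, symmetry-reduced case check, entirely parallel to the computation behind Table~\ref{Necessary and sufficient condition for (h, k, l) in Lambda_{ext}(G, H, x)} and to the proof of Fact~\ref{fact:no Ito's equation}, and best carried out by the same program.
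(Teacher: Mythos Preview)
The paper does not prove this statement; it is stated as a \emph{Fact} (an observation) and left without argument, in the same way as Fact~\ref{fact:no Ito's equation}. The paper's implicit justification is the finite computational verification underlying Tables~\ref{Lambda_{ext}(G, H; x) not contained in a union of hyperplanes} and~\ref{Necessary and sufficient condition for (h, k, l) in Lambda_{ext}(G, H, x)} (the same program invoked at the start of Section~\ref{Main theorems} and in the proofs of Theorems~\ref{thm:distribution rules on CT_{2, S_2}} and~\ref{thm:distribution rules on CT_{3, S_3}}).

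Your proposal is therefore not merely consistent with the paper's approach---it is an explicit write-up of what the paper leaves implicit. Reducing the question to a finite modular check via Corollary~\ref{cor:definition of Omega} and Table~\ref{Necessary and sufficient condition for (h, k, l) in Lambda_{ext}(G, H, x)}, modelling bases of $L^*$ modulo the effective modulus together with the centring constraint on $L^* \hookrightarrow L_2^*$, and then handling $\mathcal{H}_{G,H}$ separately, is exactly the right structure. One small clarification: the $M$ of Corollary~\ref{cor:definition of Omega} is literally $|R_G|$, which for the cubic groups in categories F, G, N is $24$ or $48$; what you are using (correctly) is that the congruence conditions recorded in Table~\ref{Necessary and sufficient condition for (h, k, l) in Lambda_{ext}(G, H, x)} in fact only depend on the residues modulo $2$ or $4$, so the effective modulus for the case check is at most $4$. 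With that adjustment your plan is sound and matches the paper's computational methodology.
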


The following two theorems are intended to resolve any this kind of problems caused by systematic absences.
There, it is proved that 
there exist ``infinitely many'' elements of $P_2(L^*)$ or $P_3(L^*)$ satisfying some common properties,
and connected subgraphs of a topograph containing infinitely many nodes are formed from the lengths of lattice vectors belonging to such elements.
Existence of infinitely many elements is required not to fail to gain the true solution,
because only a finite subset $\Lambda^{obs} \subset \Gamma_{ext}(G, H)$ is available in computation.
From the latter claim, it is guaranteed that rather large subgraph is formed for the true solution,
even from finite elements belonging to $\Lambda^{obs}$.
This is useful to select better candidate solutions and reduce computation time.

We shall explain the meaning of Theorem \ref{thm:distribution rules on CT_{2, S_2}} before describing the statements;
in short, it claims 
that the equation $3 \abs{ l_1^* }^2 + \abs{ l_1^* + 2 l_2^* }^2 = \abs{ 2 l_1^* + l_2^* }^2 + 3 \abs{ l_2^* }^2$
works appropriately regardless of the type of systematic absences. 
When $(L^*, B^*)$ represents the reciprocal lattice of the lattice to obtain as a solution and a matrix
whose columns are a basis of $L^*$, 
this new equation corresponds to the subgraph of $CT_{2, (L^*, B^*)}$ consisting of 
three nodes and two edges in the left-hand of Figure \ref{Edges associated with abs{l_1^* + l_2^*}^2}.
A subgraph of $CT_{2, (L^*, B^*)}$ containing infinitely many nodes as the right-hand of Figure \ref{Edges associated with abs{l_1^* + l_2^*}^2} is formed by linking such a subgraph obtained from 
$q_r, q_t, q_s, q_u \in \Lambda_{ext}(G, H)$ satisfying $3 q_r + q_t = 3 q_s + q_u$.

\begin{figure}
\begin{center}
\scalebox{0.64}{\includegraphics{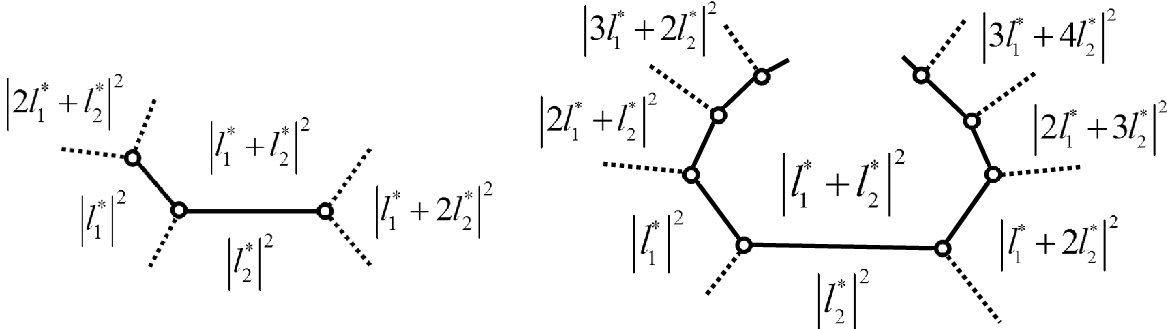}}
\end{center}
\caption{A subgraph of a topograph corresponding to the equation $3 \abs{ l_1^* }^2 + \abs{ l_1^* + 2 l_2^* }^2 = \abs{ 2 l_1^* + l_2^* }^2 + 3 \abs{ l_2^* }^2$ and a chain formed from such subgraphs.}
\label{Edges associated with abs{l_1^* + l_2^*}^2}
\end{figure}

\begin{theorem}\label{thm:distribution rules on CT_{2, S_2}}
	For any crystallographic group $G$ with the translation group $L$ and the type of systematic absences $(G, H)$, 
	the following subset of $\tilde{P}_2(L^*)$ includes infinitely many elements.
	\begin{eqnarray}
		\tilde{P}_2(L^*)
		:= \left\{ \{ l_1^*, l_2^* \} \in P_2(L^*) : 
				m l_1^* + (m-1) l_2^* \notin \Gamma_{ext}(G, H) \text{ for any } m \in \IntegerRing 
		\right\}. \hspace{10mm}
	\end{eqnarray}
	More precisely, 
	for any open convex cone $U \subset \RealField^N$ satisfying $U \cap {\mathcal H}_{G, H} = \emptyset$,
	the following holds:
	\begin{enumerate}[(1)]
		\item \label{item:infinitely many { l_1^*, l_2^* } in P_2(L^*) 3} 
			There exists $\{ l_1^*, l_2^* \} \in P_2(L^*)$
			such that $\{ m l_1^* + n l_2^* : m, n \in \IntegerRing_{\geq 0} \} \subset U$.
			If $\Gamma_{ext}(G, H) \subset {\mathcal H}_{G, H}$, 
			such $\{ l_1^*, l_2^* \} \in P_2(L^*)$ belongs to $\tilde{P}_2(L^*)$.

		\item \label{item:infinitely many { l_1^*, l_2^* } in P_2(L^*) 2}
			Assume that $(G, H)$ is one of the types
			in Table \ref{Lambda_{ext}(G, H; x) not contained in a union of hyperplanes},
			and $\{ l_1^*, l_2^* \} \in P_2(L^*)$ satisfy 
			\begin{enumerate}[(a)]
				\item $\{ l_1^*, l_2^* \} \subset U$, and
				\item $l_1^*$, $l_2^*$, $2l_1^*+ l_2^*$, $l_1^*+ 2l_2^*$, $2 l_1^* + 3 l_2^*$, $3 l_1^*+ 2 l_2^*$, $3 l_1^*+ 4 l_2^*$ 
(\IE all the vectors other than 
			$l_1^* + l_2^*$ in the right-hand of Figure \ref{Edges associated with abs{l_1^* + l_2^*}^2}) do not belong to $\Gamma_{ext}(G, H)$.
			\end{enumerate}
			Then, $\{ l_1^*, l_2^* \}$ is an element of $\tilde{P}_2(L^*)$.
			Furthermore, $\{ l_1^*, l_2^* \} \in P_2(L^*)$ satisfying the two properties exists.
	\end{enumerate}
Furthermore, there exist infinitely many 2-rank sublattices $L_2^* \subset L^*$ 
such that $L_2^*$ is expanded by a primitive set in $\tilde{P}_2(L^*)$.
\end{theorem}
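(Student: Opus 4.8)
The plan is to split according to whether $\Gamma_{ext}(G, H, x + V^{\sum_{R_H}})$ is contained in the hyperplane union ${\mathcal H}_{G,H}$ of (\ref{eq:definition of {mathcal H}_{G, H}}): the geometric part~(\ref{item:infinitely many { l_1^*, l_2^* } in P_2(L^*) 3}) handles all such types at once, while part~(\ref{item:infinitely many { l_1^*, l_2^* } in P_2(L^*) 2}) disposes of the finitely many remaining types in Table~\ref{Lambda_{ext}(G, H; x) not contained in a union of hyperplanes} by reducing, via Corollary~\ref{cor:definition of Omega}, to a finite computation with residues modulo $M L^*$. Throughout write $\mathcal{A}_m := m l_1^* + (m-1) l_2^*$, so that $\{l_1^*, l_2^*\} \in \tilde{P}_2(L^*)$ means exactly $\mathcal{A}_m \notin \Gamma_{ext}(G,H,x+V^{\sum_{R_H}})$ for all $m \in \IntegerRing$; this is the arithmetic counterpart of the chain in the right-hand side of Figure~\ref{Edges associated with abs{l_1^* + l_2^*}^2}.

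For~(\ref{item:infinitely many { l_1^*, l_2^* } in P_2(L^*) 3}), since $U$ is an open convex cone it is full-dimensional, so $U \cap L^*$ spans $\RealField^N$. I would pick a primitive vector $l_1^* \in U \cap L^*$ (divide any nonzero element of $U \cap L^*$ by the gcd of its coordinates; it stays in $U$ because $U$ is a cone) and extend it to a basis $l_1^*, w_2, \dots, w_N$ of $L^*$. For $k \gg 0$ the vector $l_2^* := w_2 + k l_1^* = k(l_1^* + k^{-1} w_2)$ lies in $U$, since $l_1^* + k^{-1} w_2 \to l_1^* \in U$ and $U$ is open, and $\{l_1^*, l_2^*\}$ is still a primitive set (elementary change of basis). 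As $U$ is a convex cone through $l_1^*$ and $l_2^*$, it contains $\{m l_1^* + n l_2^* : m, n \in \IntegerRing_{\ge 0}\}$. If moreover $\Gamma_{ext}(G, H, x+V^{\sum_{R_H}}) \subset {\mathcal H}_{G,H}$, then for $m \ge 1$ the vector $\mathcal{A}_m$ has nonnegative coefficients, hence lies in $U$ and avoids $\Gamma_{ext} \subset {\mathcal H}_{G,H}$; for $m \le 0$ one has $\mathcal{A}_m = -((-m) l_1^* + (1-m) l_2^*) \in -U$, and since ${\mathcal H}_{G,H}$ is a union of linear subspaces ($-{\mathcal H}_{G,H} = {\mathcal H}_{G,H}$) it again avoids $\Gamma_{ext}$. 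Thus $\{l_1^*, l_2^*\} \in \tilde{P}_2(L^*)$, and varying $k$ gives infinitely many such pairs; by the case split of Section~\ref{Results in N = 3} this covers the opening assertion for every valid type outside Table~\ref{Lambda_{ext}(G, H; x) not contained in a union of hyperplanes}.

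For~(\ref{item:infinitely many { l_1^*, l_2^* } in P_2(L^*) 2}), I would first note that the seven vectors in (b) are precisely $\pm \mathcal{A}_m$ for $m = -3, -2, -1, 0, 1, 2, 3$ (namely $l_1^* = \mathcal{A}_1$, $l_2^* = -\mathcal{A}_0$, $2l_1^*+l_2^* = \mathcal{A}_2$, $l_1^*+2l_2^* = -\mathcal{A}_{-1}$, $2l_1^*+3l_2^* = -\mathcal{A}_{-2}$, $3l_1^*+2l_2^* = \mathcal{A}_3$, $3l_1^*+4l_2^* = -\mathcal{A}_{-3}$), so, $\Gamma_{ext}$ being symmetric, hypothesis (b) says exactly that $\mathcal{A}_m \notin \Gamma_{ext}$ for $|m| \le 3$. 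Exactly as in part~(1), each $\mathcal{A}_m$ lies in $U$ (for $m \ge 1$) or in $-U$ (for $m \le 0$), hence $\mathcal{A}_m \notin {\mathcal H}_{G,H}$ for all $m \in \IntegerRing$; so by Corollary~\ref{cor:definition of Omega} whether $\mathcal{A}_m \in \Gamma_{ext}$ depends only on the class of $\mathcal{A}_m$ modulo $M L^*$, and by the explicit $\Omega$ of Table~\ref{Necessary and sufficient condition for (h, k, l) in Lambda_{ext}(G, H, x)} only on the coordinates $m a_i + (m-1) b_i$ reduced modulo a small integer $M_0 \in \{2,3,4,6,8\}$ depending on the type, where $a_i, b_i$ are the coordinates of $l_1^*, l_2^*$ in the reference basis of Table~\ref{Lambda_{ext}(G, H; x) not contained in a union of hyperplanes}. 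Hence the set $\{ m \in \IntegerRing : \mathcal{A}_m \in \Gamma_{ext} \}$ is periodic with period dividing $M_0$. When $M_0 \le 7$ the seven checked values $m = -3, \dots, 3$ already meet every residue class modulo $M_0$, so (b) forces $\mathcal{A}_m \notin \Gamma_{ext}$ for all $m$. For the two modulus-$8$ types (J and M of Table~\ref{Necessary and sufficient condition for (h, k, l) in Lambda_{ext}(G, H, x)}), where $m = -3, \dots, 3$ misses one residue, I would verify directly — enumerating the finitely many residue patterns $(a_i \bmod 8, b_i \bmod 8)$ of a pair that extends to a basis of $L^*$ — that if $\mathcal{A}_{-3}, \dots, \mathcal{A}_3$ all avoid $\Gamma_{ext}$ then so does $\mathcal{A}_4$, hence every $\mathcal{A}_m$; this is the step carried out by computer, and the same enumeration exhibits, for each type in the table, a residue pattern for which all $\mathcal{A}_m$ avoid $\Gamma_{ext}$.

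To realize such a residue pattern by an actual pair satisfying (a) and thereby finish both the "furthermore" and the "infinitely many" claims, I would lift the chosen residues to a basis $(c_1, c_2, c_3)$ of $L^*$ (possible since $GL_3(\IntegerRing) \to GL_3(\IntegerRing/M)$ is surjective), choosing the completing residue so that $c_3$ can be taken deep inside $U$; then act by elementary matrices of the congruence subgroup $\Gamma(M) = \ker(GL_3(\IntegerRing) \to GL_3(\IntegerRing/M))$, moving first $c_1$ (using that the full-dimensional cone $U$ meets the affine sublattice $c_1 + M\langle c_2, c_3\rangle$, which contains a ray entering $U$ because $c_3 \in U$) and then $c_2$ (by $c_2 \mapsto c_2 + kM c_1$ with $k \gg 0$) into $U$, without changing their classes modulo $M L^*$. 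For the resulting primitive pair $\{l_1^*, l_2^*\}$, every $\mathcal{A}_m$ is congruent modulo $M L^*$ to the corresponding $\mathcal{A}_m$ for $(c_1, c_2)$, so all $\mathcal{A}_m$ avoid $\Gamma_{ext}$ and $\{l_1^*, l_2^*\} \in \tilde{P}_2(L^*)$; replacing $l_2^*$ by $l_2^* + kM l_1^*$ leaves the residues, hence the conclusion, and yields infinitely many such pairs. The main obstacle is the finite verification in part~(2) for types J and M: there seven consecutive $\mathcal{A}_m$ do not exhaust the residue classes, so periodicity alone is insufficient and one must use the precise shape of $\Omega$; everything else is elementary lattice geometry together with Corollary~\ref{cor:definition of Omega}.
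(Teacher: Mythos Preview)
Your proposal is correct and follows essentially the same strategy as the paper: a geometric argument for part~(\ref{item:infinitely many { l_1^*, l_2^* } in P_2(L^*) 3}) via a primitive pair inside $U$, and for part~(\ref{item:infinitely many { l_1^*, l_2^* } in P_2(L^*) 2}) a reduction via Corollary~\ref{cor:definition of Omega} to a periodic condition on $m$ modulo $M$, settled by a finite check, followed by lifting a good residue class to an actual primitive pair in $U$. The paper packages the ``primitive pair in $U$ with prescribed residues'' step as the separate Lemma~\ref{lem:open cone and lattice}, whereas you redo that argument inline; and the paper obtains ``infinitely many'' from the shift $\{l_1^*,l_2^*\}\mapsto\{-n l_1^*-(n-1)l_2^*,\,(n+1)l_1^*+n l_2^*\}$ rather than by varying $k$, but these are cosmetic differences. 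Your extra observation that for all types except J and M the period already divides some $M_0\le 7$, so that seven consecutive values automatically suffice, is a mild sharpening of the paper's blanket ``by direct computation'' and correctly isolates where the computer check is genuinely needed.
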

\begin{proof}
If $\tilde{P}_2(L^*)$ is not empty, it includes infinitely many elements 
because, for any $n \in \IntegerRing$,
$\{ -n l_1^* - (n-1) l_2^*, (n+1) l_1^* + n l_2^* \} \in P_2(L^*)$
belongs to $\tilde{P}_2(L^*)$ if and only if $\{ l_1^*, l_2^* \} \in \tilde{P}_2(L^*)$.
Hence, the first statement 
 follows immediately from
(\ref{item:infinitely many { l_1^*, l_2^* } in P_2(L^*) 3}) and 
(\ref{item:infinitely many { l_1^*, l_2^* } in P_2(L^*) 2}).

(\ref{item:infinitely many { l_1^*, l_2^* } in P_2(L^*) 3}) is almost straightforward.
The existence of $\{ l_1^*, l_2^*, l_3^* \} \in P_2(L^*)$
satisfying $\{ l_1^*, l_2^*, l_3^* \} \subset U$
is proved by Lemma \ref{lem:open cone and lattice}.
Thus, $\sum_{i=1}^3 m_i l_i^* \in U$ holds for any integers $m_i \geq 0$.
In this case, 
$\{ l_1^*, l_2^* + k l_3^* \} \in P_2(L^*)$ is a subset of $U$ for any integer $k \geq 0$,
and their expanding sublattices are different from each other.
From the assumption $\Gamma_{ext}(G, H) \subset {\mathcal H}_{G, H}$,
$\pm ( m_1 l_1^* + m_2 (l_2^* + k l_3^*) ) \notin \Gamma_{ext}(G, H)$ follows.
As a result, $\{ l_1^*, l_2^* + k l_3^* \} \in \tilde{P}_2(L^*)$ is obtained.
(Here, $l^* \in \Gamma_{ext} \Leftrightarrow -l^* \in \Gamma_{ext}$ was used.)

In order to prove (\ref{item:infinitely many { l_1^*, l_2^* } in P_2(L^*) 2}),
let $M$ be the order of $R_G$, 
and $\Omega$ be the following set defined for the type $(G, H)$ in Corollary \ref{cor:definition of Omega}:
\begin{eqnarray}
	\Omega := \left\{ l^* + M L^* : l^* \in \Gamma_{ext}(G, H) \setminus {\mathcal H}_{G, H} \right\}.
\end{eqnarray}

The following set is defined using $\Omega$:
\begin{eqnarray}
		P_{2, M}(L^*) &:=& \left\{ \{ l_1^* + M L^*, l_2^* + M L^* \} : \{ l_1^*, l_2^* \} \in P_2(L^*) \right\}, \label{eq:definition of P_{2, M}(L^*)} \\
	\tilde{P}_{2, M}(L^*) &:=& \left\{ \{ l_1^* + M L^*, l_2^* + M L^* \} \in P_{2, M}(L^*): 
			m l_1^* + (m-1) l_2^* + M L^* \notin \Omega \text{ for any } m \in \IntegerRing
	\right\}. \hspace{10mm}
\end{eqnarray}

By direct computation,
it is verified that $\tilde{P}_{2, M}(L^*)$ equals the following set:
\begin{eqnarray}
	\left\{ \{ l_1^* + M L^*, l_2^* + M L^* \} \in P_{2, M}(L^*) : 
		\begin{matrix}
		m l_1^* + (m-1) l_2^* + M L^* \notin \Omega\ (k \leq m \leq k+6) \text{ for some } k \in \IntegerRing
		\end{matrix}
	\right\}.
\end{eqnarray}
Furthermore, $\tilde{P}_{2, M}(L^*) \neq \emptyset$ holds for any type of systematic absences presented in Table \ref{Lambda_{ext}(G, H; x) not contained in a union of hyperplanes} (see Table \ref{Cardinality of Gamma_{ext}(G, H, x) in L^*}).
Consequently, some 
$\{ l_1^*, l_2^*, l_3^* \} \in P_3(L^*)$ 
satisfies the assumptions (a) and (b), as a result of Lemma \ref{lem:open cone and lattice}.
In this case, 
$\{ l_1^*, l_2^* + k M l_3^* \} \in \tilde{P}_{2}(L^*)$ holds for any $k \geq 0$,
because of 
$\{ l_1^* \text{ mod } M, l_2^*  \text{ mod } M \} \in \tilde{P}_{2, M}(L^*)$ and $U \cap {\mathcal H}_{G, H} = \emptyset$.
The lattices expanded 
by $\{ l_1^*, l_2^* + k M l_3^* \}$ are different, depending on $k \geq 0$.
Hence all the statements were shown.
\end{proof}

The following lemma is used in the proof of Theorem \ref{thm:distribution rules on CT_{2, S_2}}.  
Although it is straightforward, we provide a proof.
\begin{lemma}\label{lem:open cone and lattice}
	Let $L \subset \RealField^N$ be a lattice of rank $N$, and $1 \leq m \leq N$ be an integer. 
	Then, any open cone $U \subset \RealField^N$ 
	contains a primitive set $\{ l_1, \ldots, l_m \} \in P_m(L)$.
	Furthermore, when $M > 0$ is a positive integer and $\{ k_1, \ldots, k_m \} \in P_m(L)$,
	$U$ contains infinitely many $\{ l_1, \ldots, l_m \} \in P_m(L)$
	satisfying $l_i - k_i \in M L$ for any $1 \leq i \leq m$.
\end{lemma}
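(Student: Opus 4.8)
The plan is to build $l_1,\dots,l_m$ one vector at a time: at each stage I append a vector that lies in $U$ and that, modulo the span of the vectors chosen so far, reduces to a primitive vector of the quotient lattice; for the second assertion I additionally drag the prescribed residue class modulo $ML$ along the construction. Before starting I make two harmless reductions. Since $U$ is an open cone it contains a ball $B(c,\rho)$ with $0\notin\overline{B(c,\rho)}$, and the conical hull $\mathbb{R}_{>0}\cdot B(c,\rho)$ is an open \emph{convex} cone contained in $U$, so (a primitive set found inside it lies in $U$) I may assume $U$ is a full-dimensional open convex cone. I also record the single geometric fact used throughout: a convex cone absorbs nonnegative combinations, i.e.\ if $u\in U$ and $w_1,\dots,w_r\in U$ then $u+\sum_i s_i w_i\in U$ for all $s_i\ge 0$ (it is a positive multiple of a convex combination of $u,w_1,\dots,w_r$). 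Combined with $\mathbb{R}_{>0}U=U$, this is exactly what lets me round a real lift of a lattice vector into $U$ without disturbing its class modulo the span of the earlier vectors.

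\emph{First assertion.} The base case is that an open convex cone contains a primitive vector of $L$: by density of $L\otimes\mathbb{Q}$ and openness, $U$ contains $q^{-1}l$ with $l\in L$, $q\in\mathbb{Z}_{>0}$; as $U$ is a cone $l\in U$, and writing $l=d\,l'$ with $l'$ primitive and $d\ge 1$ gives $l'=d^{-1}l\in U$. For the step, suppose $\{l_1,\dots,l_{j-1}\}\in P_{j-1}(L)$ has been chosen in $U$; put $W=\mathrm{span}_{\mathbb{R}}(l_1,\dots,l_{j-1})$, let $\pi\colon\mathbb{R}^N\to\mathbb{R}^N/W$ be the projection and $\bar L=\pi(L)$, of rank $N-j+1$. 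Since the set is primitive, $L\cap W=\mathbb{Z}l_1\oplus\dots\oplus\mathbb{Z}l_{j-1}$, so $v\in L$ extends it to an element of $P_j(L)$ precisely when $\pi(v)$ is a primitive vector of $\bar L$; also $\pi(U)$ is an open convex cone containing $\pi(l_1)=0$, hence equals all of $\mathbb{R}^N/W$ when $j\ge2$. Choose a primitive vector $\bar l$ of $\bar L$ (for $j=1$ this is the base case, using $\pi(U)=U$; for $j\ge2$ any primitive vector will do), pick $u\in U$ with $\pi(u)=\bar l$ and $l^{(0)}\in L$ with $\pi(l^{(0)})=\bar l$, so $u-l^{(0)}=\sum_{r<j}c_r l_r$ with $c_r\in\mathbb{R}$; then $l_j:=l^{(0)}+\sum_{r<j}\lceil c_r\rceil l_r=u+\sum_{r<j}(\lceil c_r\rceil-c_r)l_r\in U$ by the absorption fact, and $\pi(l_j)=\bar l$, so $\{l_1,\dots,l_j\}\in P_j(L)$. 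This completes the induction.

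\emph{Second assertion} (I take $N\ge 2$, the only case needed). Fix a basis $k_1,\dots,k_N$ of $L$ extending the given primitive set, so that in these coordinates $L=\mathbb{Z}^N$, $k_i=\mathbf{e}_i$, and write $v=(v^{(1)},\dots,v^{(N)})$ for coordinates. For the base vector I need a primitive $l_1\in U\cap(k_1+ML)$: take a primitive $v\in U\cap L$ with $v\ne\pm k_1$ (such $v$ exists because, for $N\ge 2$, $U$ is open and primitive directions are dense) and put $l_1=k_1+Mn_1 v$; then $l_1\equiv k_1\pmod{ML}$ and $l_1\in U$ for all large $n_1$, while a prime dividing the content of $l_1=(1+Mn_1v^{(1)},\,Mn_1v^{(2)},\dots,Mn_1v^{(N)})$ divides neither $M$ nor $n_1$ yet divides $Mn_1\gcd(v^{(2)},\dots,v^{(N)})$, hence divides $\gcd(v^{(2)},\dots,v^{(N)})$, so $l_1$ is primitive once $n_1$ avoids one residue class modulo each of those finitely many primes. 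At step $j\ge2$, with $\{l_1,\dots,l_{j-1}\}\subset U$ primitive and $l_i\equiv k_i\pmod{ML}$ already built, reintroduce $W,\pi,\bar L$ as above; I need a primitive vector $\bar l$ of $\bar L$ lying in $\pi(k_j)+M\bar L$. Such $\bar l$ exists because the content of $\pi(k_j)$ is coprime to $M$: expanding $\{l_1,\dots,l_{j-1}\}$ to a basis of $L$ and using $l_i\equiv k_i\pmod{ML}$, the change-of-basis matrix from $\{k_\bullet\}$ is congruent modulo $M$ to a block-triangular matrix $\bigl(\begin{smallmatrix}I&B\\0&C\end{smallmatrix}\bigr)$, so $\det C\equiv\pm1\pmod M$, and any prime dividing the content of the column of $C$ that represents $\pi(k_j)$ would divide $\det C$, hence could not divide $M$; an elementary sieve then gives the desired primitive $\bar l$ in that coset (when $j=N$, $\bar L$ has rank $1$ and $C$ is a scalar $\equiv\pm1\pmod M$, so this is immediate). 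Lifting $\bar l$ to $l^{(0)}\in k_j+ML$ with $\pi(l^{(0)})=\bar l$ and setting $l_j=l^{(0)}+M\sum_{r<j}s_r l_r$ with $s_r\ge0$ large gives $l_j\in U$ by the absorption fact; the extra factor $M$ preserves $l_j\equiv k_j\pmod{ML}$, and $\pi(l_j)=\bar l$ keeps $\{l_1,\dots,l_j\}$ primitive. Iterating to $j=m$ yields one solution, and infinitely many follow at once, e.g.\ replacing $l_1$ by $l_1+Ml_2$ when $m\ge2$ (again in $U$ with the same residues by the absorption fact), or by varying $n_1$ in the base step.

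The genuine obstacle — though still elementary — is the congruence bookkeeping in the second assertion, precisely the claim that $\pi(k_j)+M\bar L$ contains a primitive vector of $\bar L$: a set congruent modulo $M$ to a primitive set need not be primitive, so this needs the determinant-modulo-$M$ computation above rather than a formal manipulation. Everything else reduces to density of $L\otimes\mathbb{Q}$, dividing a lattice vector by its content, and the absorption property of convex cones.
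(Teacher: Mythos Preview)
Your argument is correct, with one minor imprecision: in the inductive step of the second assertion you invoke the absorption fact to place $l_j=l^{(0)}+M\sum_{r<j}s_r l_r$ in $U$, but that fact as you stated it needs the base point $l^{(0)}$ itself to lie in $U$, which you have not arranged. The fix is the same trick you used in the first assertion: pick $u\in U$ with $\pi(u)=\bar l$ (possible since $\pi(U)=\mathbb{R}^N/W$), write $l^{(0)}=u-\sum_{r<j}c_r l_r$, and choose integers $s_r$ with $Ms_r\ge c_r$; then $l_j=u+\sum_{r<j}(Ms_r-c_r)l_r\in U$ by absorption. Your restriction to $N\ge 2$ is indeed necessary for the ``infinitely many'' claim.

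Your route differs from the paper's, especially in the second assertion. For the first assertion both proofs are inductive and push a new vector into $U$ by adding nonnegative combinations of the already-chosen ones; you do this via projection to the quotient lattice and a ceiling-rounding lift, whereas the paper simply adds a large multiple of a single previously chosen vector. For the second assertion the paper takes a global shortcut: having a full basis $l_1,\dots,l_N\in U$ from the first part, it asserts that the reduction map from matrices in $GL_N(\mathbb{Z})$ with positive entries onto $\{g\in GL_N(\mathbb{Z}/M\mathbb{Z}):\det g\equiv\pm1\}$ is surjective, lifts the change-of-basis matrix $k_i=gl_i$ to such a $g_0$, and observes that $g_0l_i\in U$ (positive combinations of vectors in a convex cone) with $g_0l_i\equiv k_i\pmod{ML}$. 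Your approach instead builds the set one vector at a time with explicit sieving, and your determinant-mod-$M$ computation showing the content of $\pi(k_j)$ is coprime to $M$ is doing by hand what the paper's surjectivity claim packages. The paper's argument is shorter but leans on an unproved (though true for $N\ge 2$) surjectivity statement; yours is more self-contained.
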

\begin{proof}
	We prove the first statement by induction.
	Since $\{ a l : 0 \neq a \in \RationalField, l \in L \} = \{ a l : a \in \RationalField, l \in P_1(L) \}$ is dense in $\RealField^N$,
	there exist $0 \neq a \in \RationalField$ and $l \in P_1(L)$ such that $a l \in U$.
	Hence, $l \in U$ is obtained.
	Next suppose that $m < N$ and there is $T \in P_m$ contained in $U$.
	Then there exists $l \in L$ such that $T \cup \{ l \} \in P_{m+1}$.
	For any arbitrarily fixed $l_2 \in T$,
	there is $\epsilon > 0$ such that $U_2 := \{ x \in \RealField^N : (1 - \epsilon) \abs{x}^2 \abs{l_2}^2 \leq (x \cdot l_2)^2 \}$ is contained in $U$.  In this case, $l + s l_2 \in U_2$ holds for sufficiently large integer $s > 0$.
	As a result, $T \cup \{ l + s l_2 \}$ is a subset of $U$ and primitive.
	In order to prove the second statement, it is sufficient if
	some $\{ l_1, \ldots, l_m \} \in P_m(L)$ satisfies the desired property.
	We fix a basis $l_1, \ldots, l_N \in U$ of $L$
	and $g \in GL(\IntegerRing)$
	satisfying $k_i = g l_i$ for any $1 \leq i \leq m$.
	When the subgroup of $GL(\IntegerRing)$ with positive entries is denoted by $GL_{+}(\IntegerRing)$, 
	the natural map $GL_{+}(\IntegerRing) \longrightarrow G := \{ g \in GL(\IntegerRing / M \IntegerRing) : \det g = \pm 1 \text{ mod } M \}$ is an epimorphism.
	Let $g_0 \in GL_{+}(\IntegerRing)$ be an element belonging to the inverse image of $g$ mod $M$.
	Then $g_0 l_1, \ldots, g_0 l_N$ are all contained in $U$ and satisfy $g_0 l_i - k_i \in M L$ ($1 \leq i \leq m$).
\end{proof}

\begin{table}[htbp]
\caption{Density of $L^* \setminus \Gamma_{ext}(G, H)$ in $L^*$.}
\label{Cardinality of Gamma_{ext}(G, H, x) in L^*}
\begin{minipage}{\textwidth}
\begin{center}
\begin{tabular}{lll}
Type & $\tilde{P}_{2, M}(L^*)$ in $P_{2, M}(L^*)$ & $\tilde{P}_{3, M}(L^*)$ in $P_{3, M}(L^*)$ \\
\hline
A	 & 	0.321 	 & 	0.286 	\\
B	 & 	0.286 	 & 	0.143 	\\
C	 & 	0.476 	 & 	0.190 	\\
D	 & 	0.341 	 & 	0.209 	\\
E	 & 	0.736 	 & 	0.604 	\\
F	 & 	0.714 	 & 	0.571 	\\
G	 & 	0.214 	 & 	0.027 	\\
H	 & 	0.429 	 & 	0.058 	\\
I	 & 	0.714 	 & 	0.571 	\\
J1 \& J2	 & 	0.071 	 & 	0.022 	\\
K	 & 	0.857 	 & 	0.786 	\\
L	 & 	0.107 	 & 	0.004 	\\
M	 & 	0.036 	 & 	0.004 	\\
N	 & 	0.036 	 & 	0.004 	
\end{tabular}
\end{center}
\footnotetext[1]{$M$ is the cardinality of $R_G$.  The densities are computed by dividing 
the cardinality of $\tilde{P}_{i, M}(L^*)$ by that of $P_{i, M}(L^*)$ ($i = 2, 3$),
where $P_{i, M}(L^*)$ is defined by (\ref{eq:definition of P_{2, M}(L^*)}) and (\ref{eq:definition of P_{3, M}(L^*)}).
}
\footnotetext[2]{
The densities of G, H, J, L, M, N, which consist of only primitive and body-centered cubic lattices, are rather small.
We were not able to find another combination of $q$-values
with larger densities.
As a practical measure against small densities,
the parallelogram law is used in the enumeration procedure of Table \ref{Enumeration of the three dimensional lattices enumTwoDimLattices},
in addition to $3 \abs{ l_1^* }^2 + \abs{ l_1^* + 2 l_2^* }^2 = \abs{ 2 l_1^* + l_2^* }^2 + 3 \abs{ l_2^* }^2$.
(This is effective except for the category N, owing to Fact \ref{fact:no Ito's equation}.)
Furthermore, the condition (\ref{item:long subgraph}) of Theorem \ref{thm:distribution rules on CT_{3, S_3}}
is not required to hold for infinitely many $m$ in the actual algorithm, and subgraphs with relatively many edges are given priority.
}
\end{minipage}
\end{table}

Theorem \ref{thm:distribution rules on CT_{3, S_3}} claims that 
a solution of rank 3 is obtained from the combination of 
two subgraphs of a topograph for lattices expanded by $l_1^*$ and $l_i^*$ ($i = 2, 3$) as in the right-hand of Figure \ref{Edges associated with abs{l_1^* + l_2^*}^2}
and $\abs{\pm l_1^*+l_2^*+l_3^*}^2 \in \Lambda_{ext}(G, H)$.
By using such subgraphs with infinitely many edges for the purpose,
the ``sort criterion for zones'' proposed in Section \ref{Speed-up using topographs} 
is provided a theoretical foundation.

\begin{theorem}\label{thm:distribution rules on CT_{3, S_3}}
	Using the same notation as Theorem \ref{thm:distribution rules on CT_{2, S_2}}, 
	let $\tilde{P}_3(L^*)$ be the set of $\{ l_1^*, l_2^*, l_3^* \} \in P_3(L^*)$ 
			satisfying (a) and (b) with both $i= 2, 3$.
			\begin{enumerate}[(a)]
				\item $\pm l_1^* + l_2^* + l_3^* \notin \Gamma_{ext}(G, H)$.
				\item \label{item:long subgraph} 
					$m l_1^* + (m-1)(-l_1^* + l_i^*) \notin \Gamma_{ext}(G, H)$ for any $m \in \IntegerRing$,
					or 
					$m l_i^* + (m-1)(l_1^* - l_i^*) \notin \Gamma_{ext}(G, H)$ for any $m \in \IntegerRing_{\geq 0}$.
			\end{enumerate}
	$\tilde{P}_3(L^*)$ then contains infinitely many elements, regardless of the type of systematic absences.
\end{theorem}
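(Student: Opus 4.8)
The plan is to adapt the proof of Theorem~\ref{thm:distribution rules on CT_{2, S_2}}: pass to residues modulo $M := |R_G|$, verify a finite combinatorial statement there, and lift it back using Lemma~\ref{lem:open cone and lattice}. First I would introduce the modular analogue $\tilde{P}_{3, M}(L^*)$ of $\tilde{P}_3(L^*)$, namely the set of $\{ l_1^* + M L^*, l_2^* + M L^*, l_3^* + M L^* \}$ arising from $\{ l_1^*, l_2^*, l_3^* \} \in P_3(L^*)$ for which the classes of $\pm l_1^* + l_2^* + l_3^*$ lie outside $\Omega$ and, for each $i = 2, 3$, at least one of the two chains in condition (b) has the class of every term outside $\Omega$; here $\Omega \subset L^*/M L^*$ is the set attached to $(G, H, x + V^{\sum_{R_H}})$ in Corollary~\ref{cor:definition of Omega}, and can be read off from Table~\ref{Necessary and sufficient condition for (h, k, l) in Lambda_{ext}(G, H, x)}. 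Because $m l_1^* + (m-1)(-l_1^* + l_i^*) = l_1^* + (m-1) l_i^*$ and $m l_i^* + (m-1)(l_1^* - l_i^*) = l_i^* + (m-1) l_1^*$ are periodic in $m$ modulo $M L^*$ with period dividing $M$, each chain condition in (b) reduces to a check over $M$ consecutive values of $m$; hence $\tilde{P}_{3, M}(L^*)$ is finite and explicitly computable, exactly as $\tilde{P}_{2, M}(L^*)$ was in the proof of Theorem~\ref{thm:distribution rules on CT_{2, S_2}}.

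The second step shows that $\tilde{P}_{3, M}(L^*) \neq \emptyset$ forces $\tilde{P}_3(L^*)$ to be infinite. Fix a representative $\{ k_1^*, k_2^*, k_3^* \} \in P_3(L^*)$ of a class in $\tilde{P}_{3, M}(L^*)$ and an open convex cone $U$ with $U \cap {\mathcal H}_{G, H} = \emptyset$; since ${\mathcal H}_{G, H}$ is a union of linear subspaces, $(-U) \cap {\mathcal H}_{G, H} = \emptyset$ as well. By Lemma~\ref{lem:open cone and lattice} there are infinitely many $\{ l_1^*, l_2^*, l_3^* \} \in P_3(L^*)$ with $l_i^* - k_i^* \in M L^*$ and $\{ l_1^*, l_2^*, l_3^* \} \subset U$. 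For such a triple, $l_1^* + l_2^* + l_3^*$ and, for each $i$, all $l_i^* + k l_1^*$ with $k \geq 0$ lie in $U$; the second alternative of (b) is precisely the chain that stays inside $U$ apart from its single extra term $l_i^* - l_1^*$, which is why its range is $m \in \IntegerRing_{\geq 0}$. The remaining vectors occurring in (a) and (b) --- $-l_1^* + l_2^* + l_3^*$, the terms $l_2^* - l_1^*$ and $l_3^* - l_1^*$, and (if the first alternative of (b) is the available one for some $i$) the finitely many chain terms outside $U \cup (-U)$ --- are each affine-linear in the lift and sweep out a full coset of $M L^*$ as the lift varies over its residue class; since no coset of $M L^*$ is contained in the finite union of proper subspaces ${\mathcal H}_{G, H}$, a standard general-position argument removes a sparse subset of the lifts and leaves infinitely many for which none of the vectors of (a) or (b) lies on ${\mathcal H}_{G, H}$. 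For such a lift, Corollary~\ref{cor:definition of Omega} turns conditions (a) and (b) into exactly the modular conditions defining $\tilde{P}_{3, M}(L^*)$, so the lift belongs to $\tilde{P}_3(L^*)$. The same reasoning, with ${\mathcal H}_{G, H}$ in place of $\Gamma_{ext}$, disposes at once of every type with $\Gamma_{ext}(G, H, x + V^{\sum_{R_H}}) \subset {\mathcal H}_{G, H}$ --- all of rank $2$, and all rank-$3$ types not appearing in Table~\ref{Lambda_{ext}(G, H; x) not contained in a union of hyperplanes} --- for which $\Omega$ may be taken empty and $\tilde{P}_{3, M}(L^*) = P_{3, M}(L^*) \neq \emptyset$.

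What remains, and what I expect to be the main obstacle, is the statement $\tilde{P}_{3, M}(L^*) \neq \emptyset$ for each of the fourteen types in Table~\ref{Lambda_{ext}(G, H; x) not contained in a union of hyperplanes}. This cannot be obtained simply by applying Theorem~\ref{thm:distribution rules on CT_{2, S_2}} to the pairs $\{ l_1^*, l_2^* \}$ and $\{ l_1^*, l_3^* \}$ in turn, because the two pairs share the spine $l_1^*$ and are further coupled through the cross term $\pm l_1^* + l_2^* + l_3^*$; moreover, for the cubic types G, H, J, L, M, N the proportion of admissible residue classes is very small. Concretely I would, type by type, take the explicit $\Omega$ from Table~\ref{Necessary and sufficient condition for (h, k, l) in Lambda_{ext}(G, H, x)}, enumerate $P_{3, M}(L^*)$, and exhibit at least one residue class passing the finitely many modular tests described above; this is the computation whose strictly positive outcome is recorded in the third column of Table~\ref{Cardinality of Gamma_{ext}(G, H, x) in L^*}. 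The disjunction in (b) is exactly what makes this search succeed for the most restrictive types: there a chain in the $l_1^*$--$l_i^*$ plane can be prolonged indefinitely away from $\Gamma_{ext}$ only on one side, and which side that is depends on whether the prolongation is taken through $l_1^*$ or through $l_i^*$.
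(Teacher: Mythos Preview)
Your overall strategy is exactly the paper's: reduce modulo $M = |R_G|$, define $\tilde{P}_{3,M}(L^*)$, verify it is nonempty by the finite computation recorded in Table~\ref{Cardinality of Gamma_{ext}(G, H, x) in L^*}, and lift via Lemma~\ref{lem:open cone and lattice}. Your identification of the chains as $l_1^* + (m-1)l_i^*$ and $l_i^* + (m-1)l_1^*$, and of the computational check for the fourteen types as the core obstacle, matches the paper.

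The one substantive difference is in the lifting step, and here the paper takes a cleaner route that avoids your general-position argument entirely. Instead of placing $\{l_1^*,l_2^*,l_3^*\}$ in the open convex cone $U \subset \RealField^N \setminus {\mathcal H}_{G,H}$, the paper applies Lemma~\ref{lem:open cone and lattice} to the \emph{shifted} primitive set $\{l_1^*,\,-l_1^*+l_2^*,\,-l_1^*+l_3^*\}$. With that choice, every vector appearing in (a) and in the first alternative of (b) is a sign times a nonnegative integer combination of the three cone generators: for instance $-l_1^*+l_2^*+l_3^* = l_1^* + (-l_1^*+l_2^*) + (-l_1^*+l_3^*)$, and $m l_1^* + (m-1)(-l_1^*+l_i^*)$ has both coefficients $\geq 0$ for $m \geq 1$ and both $\leq 0$ for $m \leq 0$. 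Hence all these vectors lie in $U \cup (-U)$ automatically and therefore miss ${\mathcal H}_{G,H}$; Corollary~\ref{cor:definition of Omega} then reduces membership in $\Gamma_{ext}$ to the modular condition already secured by $\tilde{P}_{3,M}(L^*)$. No separate treatment of $-l_1^*+l_2^*+l_3^*$, of $l_i^*-l_1^*$, or of the ``finitely many chain terms outside $U \cup (-U)$'' is needed.

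Your general-position argument, by contrast, is not fully justified as written. Lemma~\ref{lem:open cone and lattice} produces infinitely many lifts, but it does not parametrize them in a way that makes ``removing a sparse subset'' immediate: you would need to argue that the lifts can simultaneously be chosen to satisfy a finite (but lift-dependent, for the first alternative of (b)) family of extra linear avoidance conditions. This can be repaired, for example by restricting to a compactly contained sub-cone $U' \subset U$ to bound the number of chain terms outside $U \cup (-U)$ uniformly and then re-running the construction in the proof of Lemma~\ref{lem:open cone and lattice} with a larger avoidance set; but the paper's shifted-basis trick sidesteps the issue in one line and is the simpler fix.
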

\begin{remark}
	This theorem may be considered to refer the vectors associated with the edges
	in the circuit of length 6 in Figure \ref{Local structure of a topograph CT_3};
	the 6 edges are associated with one of the four vectors in the following parallelogram law:
	\begin{enumerate}[(a)]
		\item $2 ( \abs{l_1}^2 + \abs{l_2 + l_3}^2 ) = \abs{l_1 + l_2 + l_3}^2 + \abs{-l_1 + l_2 + l_3}^2$,
		\item $2 ( \abs{l_1}^2 + \abs{l_i}^2 ) = \abs{l_1 + l_i}^2 + \abs{-l_1 + l_i}^2$ ($i=2$ or $3$).
	\end{enumerate}
\end{remark}

\begin{figure}
\begin{minipage}{\textwidth}
\begin{center}
\scalebox{0.64}{\includegraphics{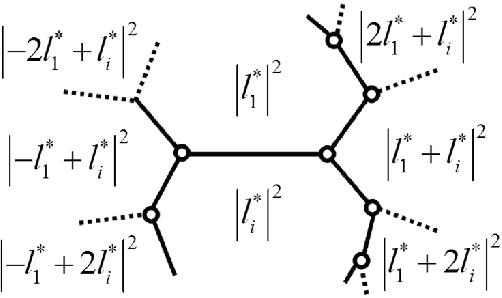}}
\end{center}
\end{minipage}
\caption{Connected subgraph of a topograph consisting of edges associated with 
$\left\{ \{ l_i, m l_1^* + (m-1)(-l_1 + l_i^*) \} : m \in \IntegerRing \right\}$ or
$\left\{ \{ l_1, m l_i^* + (m-1)(l_1^* - l_i^*) \} : m \in \IntegerRing_{\geq 0} \right\}$.
}
\label{fig:GraphWithInfiniteNodes}
\end{figure}

\begin{proof}
By Lemma \ref{lem:open cone and lattice},
any open convex cone $U \subset \RealField^N \setminus {\mathcal H}_{G, H}$
contains some $\{ l_1^*, -l_1^*+l_2^*, -l_1^* + l_3^* \} \in P_3(L^*)$.
In this case, $U$ also includes $\pm l_1^* + l_2^* + l_3^*$, $m l_1^* + (m-1)(-l_1 + l_i^*)$, $m l_1^* + (m+1)(-l_1 + l_i^*)$ and $m l_i^* + (m-1)(l_1^* - l_i^*) = m l_1^* + (-l_1 + l_i^*)$ for any $m \in \IntegerRing_{\geq 0}$.
Consequently, 
if $\Gamma_{ext}(G, H)$ is contained in ${\mathcal H}_{G, H}$,
the statement is obtained immediately.

If $(G, H)$ is one of the types
in Table \ref{Lambda_{ext}(G, H; x) not contained in a union of hyperplanes},
let $M$ be the order of $R_G$, and $\Omega$ and $\tilde{P}_{2, M}(L^*)$
be the sets defined in the proof of Theorem \ref{thm:distribution rules on CT_{2, S_2}}.
Furthermore, we define
\begin{small}
\begin{eqnarray}
	P_{3, M}(L^*) &:=& 
		\left\{ (l_1^* + M L^*, l_2^* + M L^*, l_3^* + M L^*) : \{ l_1^*, l_2^*, l_3^* \} \in P_3(L^*) \right\}, \label{eq:definition of P_{3, M}(L^*)} \\
	\tilde{P}_{3, M}(L^*) &:=& \left\{ ( l_1^* + M L^*, l_2^* + M L^*, l_3^* + M L^* ) \in P_{3, M}(L^*)
		: \begin{matrix}
			\pm l_1^* + l_2^* + l_3^* + M L^* \in \Omega, \\
			\{ l_1^* + M L^*, - l_1^* - l_i^* + M L^* \} \\
			\text{ or } \{ l_i^* + M L^*, - l_1^* - l_i^* + M L^* \} \\
			\text{belongs to } \tilde{P}_{2, M}(L^*) \text{ for both } i = 2, 3
		  \end{matrix}
	\right\}. \hspace{10mm}
\end{eqnarray}
\end{small}
By direct calculation, it is verified that
$\tilde{P}_{3, M}(L^*) \neq \emptyset$, regardless of the type of systematic absences (See Table \ref{Cardinality of Gamma_{ext}(G, H, x) in L^*}).
From Lemma \ref{lem:open cone and lattice}, there exist infinitely many $\{ l_1^*, l_2^*, l_3^* \} \in P_{3}(L^*)$ such that $\{ l_1^* + M L^*, l_2^* + M L^*, l_3^* + M L^* \} \in \tilde{P}_{3, M}(L^*)$ holds
and  $\{ l_1^*, -l_1^* + l_2^*, -l_1^* + l_3^* \}$ is a subset of $C$.
In this case, $\{ l_1^*, l_2^*, l_3^* \} \in \tilde{P}_{3}(L^*)$ also holds.
\end{proof}

\section{Algorithm for powder auto-indexing}
\label{Powder indexing algorithm}

In order to elaborate our new algorithm,
we first define a data structure that is assumed to be implemented in the program.
The set $\Lambda^{obs}$ of observed lattice vector lengths 
is input as 
an array $\langle (q_i[0], q_i[1]) : 1 \leq i \leq M \rangle$ 
consisting of pairs of a $q$-value $q_i[0]$ and its estimated error $q_i[1] = {\rm Err}[q_i[0]]$.
In our algorithm, every candidate for a Gram matrix of $L^*$ 
is provided as a matrix with entries $\sum_{i=1}^M n_i q_i[0]$ having coefficients $n_i \in \frac{1}{4} \IntegerRing$.
At various stages of powder auto-indexing,
the propagated errors of the entries are useful for making statistical judgments
and strengthening the algorithm against observation errors in the $q$-values.
For this purpose, a data structure for formal sums $\sum_{i=1}^{N_{peak}} c_i q_i$ of elements of $\Lambda^{obs}$
with coefficients $c_i \in \frac{1}{4 N} \IntegerRing$
is implemented in Conograph ($N$ is a fixed positive integer). 
The data structure is equipped with the order $<$ and functions ${\rm getTerms}$, ${\rm Val}$, ${\rm Err}$ as follows:
\begin{eqnarray}
 \sum_{i=1}^M a_i q_i < \sum_{i=1}^M b_i q_i & \underset{def}{\Leftrightarrow} & \sum_{i=1}^M a_i q_i[0] < \sum_{i=1}^M b_i q_i[0], \label{eq:order of q} \\
{\rm getTerms}\left( \sum_{i=1}^M n_i q_i \right) &:=& \{ q_i : 1 \leq i \leq M, n_i \neq 0 \}, \label{eq:definition of getTerm} \\
{\rm Val}\left( \sum_{i=1}^M n_i q_i \right) &:=& \sum_{i=1}^M n_i q_i[0], \label{eq:definition of val} \\
{\rm Err}\left( \sum_{i=1}^M n_i q_i \right) &:=& \left( \sum_{i=1}^M n_i^2 (q_i[1])^2 \right)^{1/2}. \label{eq:definition of Err}
\end{eqnarray}

In particular, 
if ${\rm Val}$ and ${\rm Err}$ are called with the argument $\sum_{i=1}^M n_i q_i$,
they return the value and the propagated error of $\sum_{i=1}^M n_i q_i[0]$, respectively.

\subsection{Algorithm for $N = 2$}
\label{Lattice determination from weighted theta series in N = 2}

In this case, according to Theorem \ref{fact:two-dimensional extinction rules}, the method using the parallelogram law 
works sufficiently.

\begin{small}
\begin{table}
	\caption{Procedures for enumeration of four $q$-values satisfying the parallelogram law.}
	\label{Enumeration of the two dimensional lattices enumTwoDimLattices}
	\begin{minipage}{\textwidth}
	\begin{tabular}{lccp{125mm}}
	\hline
	\multicolumn{4}{c}{{\bf void enumerateItoSolutions($\Lambda^{obs}, c, Ans$)}} \\
	(Input) & $\Lambda^{obs}$ & : & array of $N_{peak}$ pairs of a $q$-value $q_i[0]$ and its approximated error $q_i[1]$.\\
	 & $c > 0$ & :  & parameter setting error tolerance level. \\
	(Output) & $Ans$ & :  & array of a sequence $( \{ q_r, q_s\}, \{ q_t, q_u \} )$, where $q_r, q_s, q_t, q_u$ are elements of $\Lambda^{obs}$ satisfying 
			\begin{eqnarray*}
				\hspace{-20mm}
				\begin{cases}
				\abs{ {\rm Val}( 2q_r + 2q_s - q_t - q_u) } \leq c\ {\rm min}\{ 2 {\rm Err}(q_r + q_s), {\rm Err}(q_t + q_u) \} & \text{(Parallelogram law)}, \\
				\left( \sqrt{q_r[0]}-\sqrt{q_s[0]} \right)^2 \leq q_t[0], q_u[0] \leq \left( \sqrt{q_r[0]}+\sqrt{q_s[0]} \right)^2 & \text{(Positive-definite condition)}.
				\end{cases}
			\end{eqnarray*}
	\end{tabular}
	\begin{tabular}{p{5mm}p{10mm}p{125mm}}
 	1: & (Start)  & Set a sorted sequence $S := \langle q_i + q_j : 1 \leq i \leq j \leq N_{peak} \rangle$ of formal sums. \\
 	2: & & for $i := 1$ to $\frac{1}{2} N_{peak}(N_{peak}+1)$ do \\
 	3: & & \hspace{5mm}	Let $1 \leq J_{min}, J_{max} \leq N_{peak}$ be integers satisfying \\
 	4: & & \hspace{5mm}	$J_{min} \leq j \leq J_{max} \Longleftrightarrow \abs{ {\rm Val}(2 S[i] - S[j]) } \leq 2 c {\rm Err}(S[i])$. \\
 	5:  & & \hspace{5mm} for $j := J_{min}$ to $J_{max}$ do \\
 	6: & & \hspace{10mm}	if $\abs{ {\rm Val}(2 S[i] - S[j]) } \leq c {\rm Err}(S[j])$ then \\
 	7: & & \hspace{15mm}	$\{ q_r, q_s \} := {\rm getTerms}(S[i])$, \\ 
 	8: & & \hspace{15mm}	$\{ q_t, q_u \} := {\rm getTerms}(S[j])$. \\ 
 	9: & & \hspace{15mm}	if $(\sqrt{q_r[0]}-\sqrt{q_s[0]})^2 \leq q_t[0], q_u[0] \leq (\sqrt{q_r[0]}+\sqrt{q_s[0]})^2$ then \\
 	10: & & \hspace{20mm}		insert $( \{ q_r, q_s \}, \{ q_t, q_u \} )$ in $Ans$. \\
 	11: & & \hspace{15mm}	end if \\
 	12: & & \hspace{10mm}	end if \\
 	13: & & \hspace{5mm}	end for \\
 	14: & & end for \\
	\end{tabular}
	\end{minipage}
\end{table}
\end{small}

On output of the procedure in Table \ref{Enumeration of the two dimensional lattices enumTwoDimLattices}, each entry $( \{ q_r, q_s \}, \{ q_t, q_u \} )$ in $Ans$ satisfies the parallelogram law $2(q_r + q_s) = q_t + q_u$, and 
corresponds to the $2 \times 2$ positive-definite symmetric matrix:
\begin{eqnarray}\label{eq:Gram matrix of zone}
	( \{ q_r, q_s \}, \{ q_t, q_u \} ) \mapsto
	\begin{pmatrix}
		{\rm Val}(q_r) & \frac{1}{2}{\rm Val}(q_t - q_r - q_s) \\
		\frac{1}{2}{\rm Val}(q_t - q_r - q_s) & {\rm Val}(q_s)
	\end{pmatrix}.
\end{eqnarray}
Here we used the assumption that there exist $l_1^*, l_2^* \in L^*$ such that
$q_r = \abs{l_1^*}^2$, $q_s = \abs{l_2^*}^2$, $q_t = \abs{l_1^* + l_2^*}^2$.


\subsection{Algorithm for $N = 3$}
\label{Lattice determination from weighted theta series in N = 3}

The theorems in Section \ref{Results in N = 3}
state how to construct the Gram matrices of lattices of rank 3 from elements of $\Lambda^{obs}$ satisfying the equation $3 \abs{l_1^*}^2 + \abs{l_1^* + 2 l_2^*}^2 = \abs{2 l_1^* + l_2^*}^2 + 3 \abs{l_2^*}^2$.
Considering the case in which powder diffraction patterns contain only a small number of peaks,
it is better to also use $q$-values satisfying the parallelogram law in the enumeration algorithm.
Hence, the following two computational assumptions are used
in the algorithm of Table \ref{Enumeration of the three dimensional lattices enumTwoDimLattices}:
\begin{enumerate}[(\text{$C$}1)]
	\item \label{item:2 (q_r + q_s) = q_t + q_u}
	If $q_r, q_s, q_t, q_u \in \Lambda^{obs}$ satisfy $2 (q_r + q_s) = q_t + q_u$,
	then there are $l_1^*, l_2^* \in L^*$ such that
	$q_r = \abs{l_1^*}^2, q_s = \abs{l_2^*}^2, q_t = \abs{l_1^*+l_2^*}^2, q_u = \abs{l_1^*-l_2^*}^2$.
	\item \label{item:3 q_r + q_t = 3 q_s + q_u}
	If $q_r, q_s, q_t, q_u \in \Lambda^{obs}$ satisfy $3 q_r + q_t = 3 q_s + q_u$, then
	there are $l_1^*, l_2^* \in L^*$ such that
	$q_r = \abs{l_1^*}^2, q_s = \abs{l_2^*}^2, q_t = \abs{l_1^*+2l_2^*}^2, q_u = \abs{2l_1^*+l_2^*}^2$.
\end{enumerate}

\begin{table}
	\caption{Enumeration algorithm for three-dimensional lattices constructed from $q$-values.}
	\label{Enumeration of the three dimensional lattices enumTwoDimLattices}
\begin{minipage}{\textwidth}
\begin{tabular}{lcp{85mm}}
	\hline
\multicolumn{3}{c}{{\bf void enumerateThreeDimLattices($\Lambda^{obs}, c, {\rm det}S_{min}, {\rm det}S_{max}, Ans$)}} \\
(Input) & $\Lambda^{obs}, c$& : same as in Table \ref{Enumeration of the two dimensional lattices enumTwoDimLattices}. \\
 & ${\rm det}S_{min}$, ${\rm det}S_{max}$ & : lower and upper thresholds on determinants of output matrices. \\
(Output) & $Ans$ & : array of $3 \times 3$ positive-definite symmetric matrices.
\end{tabular}
\\
\begin{enumerate}[(1)]
	\item \label{item:enumeration of Ito equations}
	By the method in Table \ref{Enumeration of the two dimensional lattices enumTwoDimLattices},
	enumerate $q_r, q_s, q_t, q_u$ of $\Lambda^{obs}$ satisfying $2(q_r + q_s) = q_t + q_u$,
	and insert $( \{ q_r, q_s \}, \{ q_t, q_u \} )$ in $A_2$.
	(Here, $A_2$ is an array of four formal sums $( \{ Q_1, Q_2 \}, \{ Q_3, Q_4 \} )$.)

	\item \label{item:enumeration of new equations}
	Enumerate $q_r, q_s, q_t, q_u$ of $\Lambda^{obs}$ satisfying 
	the equation $3 q_r + q_t = 3 q_s + q_u$.
	This is done by a similar method as in Table \ref{Enumeration of the two dimensional lattices enumTwoDimLattices}.
	Using a new formal sum $q_{-1} := \frac{- 2 q_r + q_s + q_u}{2} = \frac{q_r - 2 q_s + q_t}{2}$,
	two sets of $q$-values satisfying the parallelogram law are generated:
	\begin{eqnarray}
		2 (q_{-1} + q_r) = q_s + q_u,\ 2(q_{-1} + q_s) = q_r + q_t.
	\end{eqnarray}
	Check whether $A_2$ contains $( \{ q_w, q_r \}, \{ q_s, q_u \} )$ or $( \{ q_w, q_s \}, \{ q_r, q_t \} )$ for some $1 \leq w \leq N_{peak}$.
	If not, this suggests that $q_{-1} := \frac{- 2 q_r + q_s + q_u}{2} = \frac{q_r - 2 q_s + q_t}{2}$
	equals $\abs{l^*}^2$ for some $l^* \in L^*$ undetected owing to some observational reason.
	Insert $( \{ \frac{- 2 q_r + q_s + q_u}{2}, q_r \}, \{ q_s, q_u \} )$ and $( \{ \frac{q_r - 2 q_s + q_t}{2}, q_s \}, \{ q_r, q_t \} )$ in $A_2$.

	\item \label{item:preparation of enumeration of rank 3} 
		For every entry $(\{ Q_1, Q_2 \}, \{ Q_3, Q_4 \}) \in A_2$,  
		insert 
		$(Q_1, Q_2, Q_3, Q_4)$,
		$(Q_1, Q_2, Q_4, Q_3)$,
		$(Q_2, Q_1, Q_3, Q_4)$,
		$(Q_2, Q_1, Q_4, Q_3)$ in a new array $A_3$.

	\item \label{item:enumeration of rank 3} For every $(Q_1, Q_2, Q_3, Q_4) \in A_3$,
		search $(R_1, R_2, R_3, R_4) \in A_3$ satisfying either of the following:
		\begin{enumerate}[(a)]
			\item $Q_1 = R_1 \in \Lambda^{obs}$.
			\item $Q_1, R_1 \notin \Lambda^{obs}$ and $\abs{ {\rm Val}(Q_1 - R_1) } \leq c {\rm Err}(Q_1 - R_1)$.
		\end{enumerate}
		In addition, for every $q_k \in \Lambda^{obs}$,
		assume that there exists $l_1^*$, $l_2^*$, $l_3^*$ satisfying$^a$
		\begin{eqnarray}
			& Q_1 \approx R_1 = \abs{l_1}^2,\
			Q_2 = \abs{l_2}^2,\
			Q_3 = \abs{l_1+l_2}^2, & \nonumber \\
			& R_2 = \abs{l_3}^2,\
			R_3 = \abs{l_1+l_3}^2,\
			q_k = \abs{l_1+l_2+l_3}^2. & 
		\end{eqnarray}
		Then, the Gram matrix $S := (l_i \cdot l_j)_{1 \leq i, j \leq 3}$
		is obtained as the following $3 \times 3$ symmetric matrix:
		\begin{eqnarray}
			\begin{pmatrix}
				Q_1 & \frac{1}{2}(Q_3-Q_1-Q_2) & \frac{1}{2}(R_3-Q_1-R_2) \\
				\frac{1}{2}(Q_3-Q_1-Q_2) & Q_2 & \frac{1}{2}(Q_1 - Q_3 - R_3 + q_k) \\
				\frac{1}{2}(R_3-Q_1-R_3) & \frac{1}{2}(Q_1 - Q_3 - R_3 + q_k) & R_2
			\end{pmatrix}.
		\end{eqnarray}
		Using ${\rm Val}$ and ${\rm Err}$, the values and propagated errors of the entries are computable.
		If ${\rm det}S_{min} \leq {\rm det}S \leq {\rm det}S_{max}$, insert $S$ in $Ans$.
\end{enumerate}

\end{minipage}
\end{table}

The computation time of the procedure in Table \ref{Enumeration of the three dimensional lattices enumTwoDimLattices}
is roughly proportional to $N_{zone}^2$,
where $N_{zone}$ is the size of $A_2$ immediately after (\ref{item:enumeration of new equations}).
This is estimated as follows.
The size of $A_3$ in (\ref{item:preparation of enumeration of rank 3}) is approximately $4 N_{zone}$, if the cases $Q_1=Q_2$ or $Q_3=Q_4$ are ignored.
When $N_{peak}$ is the cardinality of $\Lambda^{obs}$,
the average number of $(R_1, R_2, R_3, R_4) \in A_2$ satisfying (a) or (b) with regard to fixed $(Q_1, Q_2, Q_3, Q_4) \in A_2$ 
is approximated as $\frac{4 N_{zone}}{N_{peak}}$.
Hence, the number of combinations of $(Q_1, Q_2, Q_3, Q_4)$, $(R_1, R_2, R_3, R_4) \in A_2$ and $q_k \in \Lambda^{obs}$
is roughly equal to $4 N_{zone} \cdot \frac{4 N_{zone}}{N_{peak}} \cdot N_{peak} = 16 N_{zone}^2$.
Steps (1)--(3) take much less time than (4).
Hence it is concluded that the time is proportional to $N_{zone}^2$. 

The enumeration is completed 
by calling the procedure in Table \ref{Enumeration of the three dimensional lattices enumTwoDimLattices}, after which 
the following procedures are required before outputting solutions.
\begin{enumerate}
	\item Transform every enumerated Gram matrix into a Niggli-reduced form \cite{Niggli28} (which is standard in crystallography).
	\item Bravais lattice determination.
	\item Sort solutions by some figures of merit (\EG de Wolff figure of merit, widely used in powder auto-indexing \cite{Wolff68}).
	\item Remove duplicate solutions.
\end{enumerate}

\subsection{Speed-up method using topographs}
\label{Speed-up using topographs}

As described in Section \ref{Lattice determination from weighted theta series in N = 3},
the computation time of the algorithm in Table \ref{Enumeration of the three dimensional lattices enumTwoDimLattices}
is proportional to the square of the size $N_{zone}$ of $A_2$ immediately after (\ref{item:enumeration of new equations}).
Thus, an effective way to speed up the algorithm is to reduce the size of $A_2$.

When reducing the size of $A_2$,
it is necessary to retain elements obtained from $q_r, q_s, q_t, q_u \in \Lambda^{obs}$ for which 
the assumption ($C$\ref{item:2 (q_r + q_s) = q_t + q_u}) or ($C$\ref{item:3 q_r + q_t = 3 q_s + q_u})
is true, because they are essential to obtain the true solution.
A new criterion to sort the elements of $A_2$ is defined for the purpose.

Every entry $(\{ R_1, R_2 \}, \{ R_3, R_4 \})$ of $A_2$ consists of four formal sums satisfying the parallelogram law $2(R_1 + R_2) = R_3 + R_4$.
Hence, it corresponds to the subgraph of a topograph
as in Figure \ref{Substructure of a topograph corresponding to the parallologram law}.
Table \ref{Algorithm to construct the subgraph of a topograph} explains how a subgraph $T$ of a topograph is constructed
from these substructures.
The elements of $A_2$ utilized to obtain the subgraph are output in $\tilde{A}_2$.

\begin{table}
	\caption{Recursive procedure to form a subgraph of a topograph from $\tilde{e}$ and elements of $A_2$.}
	\label{Algorithm to construct the subgraph of a topograph}
\begin{minipage}{\textwidth}
\begin{tabular}{lcp{100mm}}
	\hline
\multicolumn{3}{c}{{\bf void expandSubtopograph($A_2$, $\tilde{e}$, $\tilde{A}_2$, $T$)}} \\
(Input) & $A_2$ & : array of four formal sums  $(\{ Q_1, Q_2\}, \{ Q_3, Q_4 \})$ with $Q_i := \sum_{i=1}^{N_{peak}} n_{ij} q_j$.  Furthermore, it is assumed that every entry satisfies \\ \\
	& & 
${\rm Val}( 2Q_1 + 2Q_2 - Q_3 - Q_4) \leq c\ {\rm min}\{ 2 {\rm Err}(Q_1 + Q_2), {\rm Err}(Q_3 + Q_4) \}$, \\ \\
	& & for some fixed constant $c$, and $Q_3$, $Q_4$ and either of $Q_1, Q_2$ belong to $\Lambda^{obs}$ (\IE there is $q \in \Lambda^{obs}$ such that $Q_i = q$).\\
		& $\tilde{e}$ & :$= (\{ R_1, R_2\}, R_3, R_4 )$ satisfying $(\{ R_1, R_2\}, \{ R_3, R_4 \}) \in A_2$. \\ 
(Output) & $\tilde{A}_2$ & : subset of $A_2$. \\
         & $T$ & : subgraph$^a$ of a topograph composed of substructures corresponding to $(\{ Q_1, Q_2\}, \{ Q_3, Q_4 \}) \in \tilde{A}_2$.
				(If such subgraphs are not unique, $\tilde{A}_2$ containing larger number of entries is prioritized.) \\
\end{tabular}
	\begin{tabular}{p{5mm}p{10mm}p{125mm}}
 	1: & (Start) & For $(i, j) = (1, 2), (2, 1)$, let $S_{ij}$ be the set defined by \\
	2   & & $S_{ij} := 
	\begin{cases}
			\left\{ (\{ R_i, R_4 \}, R_j, q) \in A_2 : q \in \Lambda^{obs} \right\} & \text{if } R_j \in \Lambda^{obs},  \\
			\emptyset & \text{otherwise.}
	\end{cases}$ \\
 	3: & & $\tilde{T}_{12} := \emptyset$. $\tilde{A}_{12} := \emptyset$. $\tilde{T}_{21} := \emptyset$. $\tilde{A}_{21} := \emptyset$. \\
 	4: & & for $i = 1$ to $2$ do \\
 	5: & & \hspace{5mm}	Take $j \in \{ 1, 2 \} \setminus \{ i \}$. \\
 	6: & & \hspace{5mm}	for $\tilde{e}_2 \in S_{ij}$ do \\
 	7: & & \hspace{10mm}	Call expandSubtopograph($A_2$, $\tilde{e}_2$, $T_{ij}$, $A_{ij}$). \\
 	8: & & \hspace{10mm}	if $\abs{ \tilde{A}_{ij} } < \abs{ A_{ij} }$ then \\
 	9: & & \hspace{15mm}		$\tilde{T}_{ij} := T_{ij}$. \\
 	10: & & \hspace{15mm}		$\tilde{A}_{ij} := A_{ij}$. \\
 	11: & & \hspace{10mm}	end if \\
 	12: & & \hspace{5mm}	end for \\
 	13: & & end for \\
 	14: & & Set $\tilde{A}_2 := \{ (\{ R_1, R_2\}, \{ R_3, R_4 \}) \} \cup \tilde{A}_{12} \cup \tilde{A}_{21}$. \\
 	15: & & Construct $T$ by unifying $\tilde{T}_{12}$, $\tilde{T}_{21}$ and $(\{ R_1, R_2\}, \{ R_3, R_4 \})$, as in Figure \ref{Extension of a topograph1}. \\
	\end{tabular}
\footnotetext[1]{In the actual computation, 
it is not necessary to construct a subgraph $T$,
because a sort criterion for elements of $A_2$ is defined using only $\tilde{A}_2$.
However, as $T$ has a data structure of binary trees consisting of finite nodes and edges,
it is not difficult to implement $T$ in a program.
}
\end{minipage}
\end{table}

\begin{figure}
\begin{minipage}{\textwidth}
\begin{center}
\scalebox{0.5}{\includegraphics{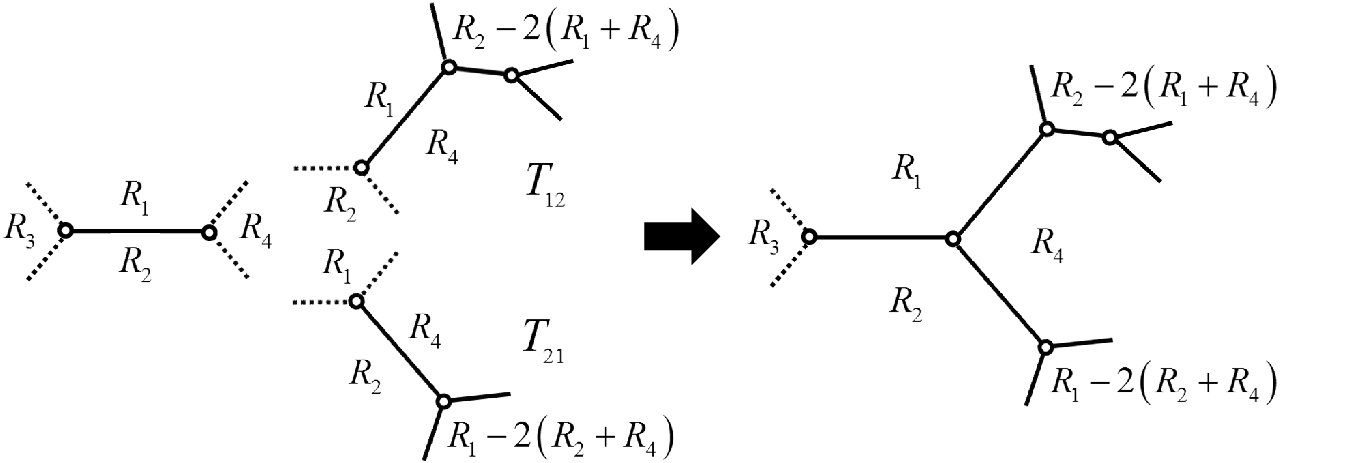}}
\end{center}
\footnotetext[1]{
Every three graphs on the left-hand side have a common 
node, which is an end point of three edges associated with $\{ R_1, R_2 \}$, $\{ R_1, R_4 \}$, and $\{ R_2, R_4 \}$.
This figure illustrates how these graphs are unified.
}
\end{minipage}
\caption{Extension of a topograph (1/2).}
\label{Extension of a topograph1}
\end{figure}

\begin{figure}
\begin{minipage}{\textwidth}
\begin{center}
\scalebox{0.5}{\includegraphics{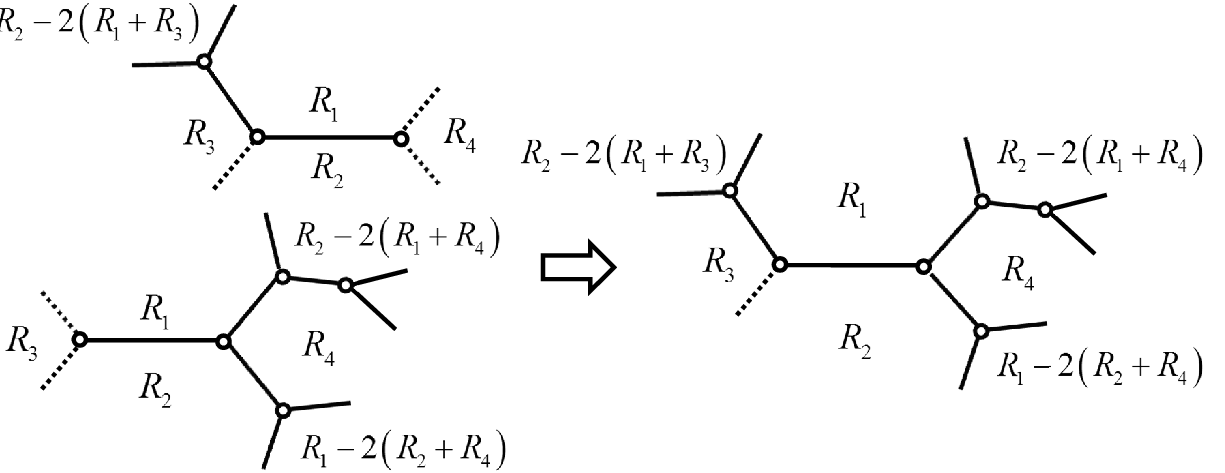}}
\end{center}
\footnotetext[1]{
The left-hand two subgraphs
contain a subgraph as in Figure \ref{Substructure of a topograph corresponding to the parallologram law} commonly. 
They are unified as above.
}
\end{minipage}
\caption{Extension of a topograph (2/2).}
\label{Extension of a topograph2}
\end{figure}

In the procedure of Table \ref{Algorithm to construct the subgraph of a topograph},
the subgraph $T$ is expanded to only one side.
The whole subgraph composed of $e := (\{ R_1, R_2 \}, \{ R_3, R_4\}) \in A_2$ and entries of $A_2$ 
is obtained by calling 
the recursive procedure twice,
setting $\tilde{e} := (\{ R_1, R_2 \}, R_3, R_4)$ and $(\{ R_1, R_2 \}, R_4, R_3)$ respectively in the second argument.
Let $C(e)$ be the cardinality of the union of the two $\tilde{A}_2$
output by calling the recursive procedure twice as above. 
This $C(e)$ is considered to quantify the size of the subgraph finally obtained.

If either ($C$\ref{item:2 (q_r + q_s) = q_t + q_u})
or ($C$\ref{item:3 q_r + q_t = 3 q_s + q_u}) is wrong for $e \in A_2$,
$C(e)$ will result in a small number, 
because the new $q$-values required to extend a new edge (\IE $q \in \Lambda^{obs}$ in line 2 of Table \ref{Algorithm to construct the subgraph of a topograph}) would rarely be found in $\Lambda^{obs}$. 
This suggests $C(e)$ provides an effective sort criterion for elements of $A_2$.
Part (b) of Theorem \ref{thm:distribution rules on CT_{2, S_2}} 
indicates that this criterion remains effective under the influence of systematic absences.

Now any $e := (\{ R_1, R_2 \}, \{ R_3, R_4 \}) \in A_2$ 
corresponds to a $2 \times 2$ Gram matrix $S(e) \in {\mathcal S}^2$ by the following map:
\begin{eqnarray}\label{eq:Gram matrix of zone 2}
	S(( \{ R_1, R_2 \}, \{ R_3, R_4 \} )) \mapsto
	\begin{pmatrix}
		{\rm Val}(R_1) & \frac{1}{2}{\rm Val}(R_3 - R_1 - R_2) \\
		\frac{1}{2}{\rm Val}(R_3 - R_1 - R_2) & {\rm Val}(R_2)
	\end{pmatrix}.
\end{eqnarray}

When $C(e)$ is used as a sort criterion,
$e \in A_2$ with smaller ${\rm det} S(e)$ is prioritized as a result,
because $\Lambda^{obs} \subset [q_{min}, q_{max}]$ contains a smaller number of $\tr{u} S(e) u$ ($u \in \IntegerRing^2$) if ${\rm det} S(e)$ has a larger value.
This property of $C(e)$ is also desirable,
because (\ref{item:enumeration of rank 3}) in Table \ref{Enumeration of the three dimensional lattices enumTwoDimLattices}
(and Theorem \ref{thm:distribution rules on CT_{3, S_3}})
requires $\{ l_1^*, l_i^* \} \in P_2(L^*)$ ($i = 2, 3$) to obtain the true solution $L^*$.
By using $C(e)$, a sublattice $L_i^*$ expanded by $\{ l_1^*, l_i^* \} \in P_2(L^*)$ is prioritized 
over any other sublattices contained in $L_i^*$.

To summarize the above discussion, 
it is only necessary to insert the following procedures between (\ref{item:enumeration of new equations}) and (\ref{item:preparation of enumeration of rank 3}) in Table \ref{Enumeration of the three dimensional lattices enumTwoDimLattices},
in order to reduce the computation time.
\begin{enumerate}[($2$i)]
	\item \label{item:speed up 1} For each element $e := (\{ R_1, R_2 \}, \{ R_3, R_4 \}) \in A_2$,
		compute $C(e)$ by calling the procedure in Table \ref{Enumeration of the three dimensional lattices enumTwoDimLattices}. 
		(In this case, any $e_2 \in \tilde{A}_2$ satisfies $C(e) = C(e_2)$.
		Hence, the number of calls of the procedure is less than the size of $A_2$.)

	\item \label{item:speed up 2} Sort $A_2$ in descending order of $C(e)$.
		For $e_1, e_2 \in A_2$ satisfying $C(e_1) = C(e_2)$,
			they may be sorted by $e_1 < e_2 \underset{def}{\Longleftrightarrow} {\rm det}S(e_1) < {\rm det}S(e_2)$.
		
	\item \label{item:speed up 3} 
		Remove the $(N_{zone}+1)$th-to-last elements of $A_2$ using a fixed integer $N_{zone} > 0$.
		(The default value of $N_{zone}$ used in Conograph is given in (\ref{eq:definition of N_{zone}}) of section \ref{Parameters for calculation conditions}.  When the default value is used,
the computation time is approximately proportional to ${N_{peak}}^4$.)
\end{enumerate}

In Section \ref{Results},
we shall see how the computation time is decreased in practice by this improvement.

\subsection{Problems with the quality of powder diffraction patterns}
\label{Problems on quality of powder diffraction patterns}

Among the problems described in ($A$1)--($A$6) and ($\tilde{A}$\ref{item:error2}) of Section \ref{Precise formulation of powder auto-indexing problem and our strategy},
we have not yet clarified how to handle ($\tilde{A}$\ref{item:error2}).
In this section, we explain 
how missing and false elements in $\Lambda^{obs}$ influence the powder auto-indexing results.
This issue is related to the quality of powder diffraction patterns as observational data.

Using a peak search program equipped with Conograph, it is not difficult to obtain the positions of all the peak heights above a given threshold automatically and rather uniformly.
Although the ability to decompose overlapping peaks depends on the peak search software,
this is not a great problem in our algorithm,
because an almost identical solution will be obtained
even if $q_1 \in \Lambda^{obs}$ is replaced with $q_2$ very close to $q_1$ 
(to some degree, ${\rm Err}[q_1]$ and ${\rm Err}[q_2]$ will absorb the difference between $q_1$ and $q_2$).

By the algorithm in Table \ref{Enumeration of the three dimensional lattices enumTwoDimLattices},
normally, multiple Gram matrices of the reciprocal lattice $L^*$ of the correct solution $L$ are generated 
from $\Lambda^{obs}$.
This is because a lattice $L^*$ has as infinitely many Gram matrices as bases of $L^*$.
Note that Gram matrices of different bases are computed 
from different $q$-values basically.
Therefore when these Gram matrices are transformed into a reduced form,
they are a bit different matrices owing to observational errors, but rather close to each other.

In order to reduce the influence of $\epsilon_1 > 0$ of ($\tilde{A}$\ref{item:error2}),
the existence of such duplicate solutions is very useful.
Suppose that $m$ Gram matrices of $L^*$ are generated from the set of $q$-values $\Lambda_{ext}(\wp) \cap [q_{min}, q_{max}]$ containing no observational errors by applying our enumeration method.
Then, when the same method is applied to $\Lambda^{obs}$ containing observational errors,
the enumeration process fails to obtain the correct solution,
only when none of approximate solutions of the $m$ Gram matrices them are generated.
The failure rate becomes very small as $m$ increases,
regardless of the magnitude of $\epsilon_1$.  
If the size of $\Lambda^{obs}$ is augmented, or equivalently the range $[q_{min}, q_{max}]$ is magnified,
$m$ increases naturally.  
As a result, the enumeration success rate increases monotonically as the size of $\Lambda^{obs}$ increases.
However, it should be noted that the time for enumeration of solutions is roughly proportional to the fourth power of the size of $\Lambda^{obs}$, if the default parameters of Conograph are used.
In addition, the success probability will not increase much once $q_{max}$ reaches some observational limit,
because larger $q$-values have larger errors owing to peak overlap and observational accuracy.

Next, we discuss the influence of $\epsilon_2$ in ($\tilde{A}$\ref{item:error2}). 
In this case, the success rate in obtaining the true solution
remains same if $\Lambda^{obs}$ is replaced with $\Lambda^{obs} \cup \{ q^{obs} \}$ which contains 
a false element $q^{obs}$.
(Of course, such $q^{obs}$ increases the time for enumeration.)

In conclusion, the enumeration process is considered to be robust to missing and false elements in $\Lambda^{obs}$, 
although such elements increase the computation time.
Indeed, the procedure to sort candidate solutions executed after the enumeration is more sensitive to missing and false elements, because figures of merit are severely affected by these elements.
In general, for a poor quality powder diffraction pattern,
it is very difficult to judge which solution is correct,  
even if the $q$-values of respective solutions are manually compared to actual peak-positions 
(see Example \ref{exam:bad quality} in Section \ref{Results}).

Example \ref{exam:two phase data} shows the case of a two-phase sample.
Both lattice parameters are acquired by Conograph.
However it seems to be almost impossible to judge which is the correct second phase parameter in this case.

\section{Implementation and results of Conograph}
\label{Computational results of Conograph}

Before introducing the input parameters and results of Conograph,
we explain the circumstances in which the program was verified. 
The Conograph source code is written in C++ and OpenMP,
and compiled with Mingw (GNU Compiler Collection for Windows). 
The computer used for the test
has an Intel i7-2620 (2.70 GHz) processor and 12 GB RAM.
The processor can execute parallel computing with eight hyper-threads,
all of which were used during the test.
Additionally, we confirmed that a computer with 4 GB RAM was able to carry out the same test.


\subsection{Input parameters of Conograph}
\label{Parameters for calculation conditions}

The input parameters used in the enumeration process
are listed in Table \ref{Parameters used in test}.
``AUTO'' is used to set parameters that are considered to depend on respective powder diffraction patterns.
The parameters required after the enumeration are not given here;
they will be introduced at another time.

\begin{table}[htbp]
\caption{Default parameters of Conograph.}
\label{Parameters used in test}
\begin{minipage}{\textwidth}
\	\begin{tabular}{lll}
\hline
		{\bf Symbol} & {\bf Meaning} & {\bf Default} \\
		$\Delta 2 \theta$ & Zero-point shift  (degree) & 0. \\
		$c$ & Tolerance level for errors in $q$-values \\
		& \hspace{10mm} Characteristic X-rays or neutron reactor sources & 1.5 \\
		& \hspace{10mm} Synchrotron X-rays or neutron spallation sources & 1. \\
		$N_{peak}$ & Number of $q$-values used & AUTO \\
		$N_{zone}$ & Threshold for the maximum number of $(\{ Q_1, Q_2 \}, \{ Q_3, Q_4 \})$ & AUTO  \\
		$N_{sol}$ & Threshold for the maximum number of solutions \\
			& \hspace{10mm} First trial & AUTO  \\
			& \hspace{10mm} When the first trial failed because of too many solutions$^a$ & $64000 \times m$ ($m \in \IntegerRing_{> 0}$)  \\
		${\rm Vol}_{min}$ & Threshold for the minimum volume of $\RealField^3 / L$ ($\AA^3$) & AUTO \\
		${\rm Vol}_{max}$ & Threshold for the maximum volume of $\RealField^3 / L$ ($\AA^3$) & AUTO
	\end{tabular}
\footnotetext[1]{
$N_{sol}$ is set to 64000 at most when AUTO is used,
in order to prevent memory allocation errors in memoryless computers. 
When the number of enumerated solutions exceeds $N_{sol}$,
Conograph removes those with a smaller unit-cell volume.
As a result, the true solution is sometimes also removed.
The simplest method of resolving this is to reduce ${\rm Vol}_{max}$ or increase $N_{sol}$ when unsatisfactory results are obtained.
This time, we chose the latter in order to measure computation time. 
However, such a trial-and-error approach is not necessary,
because an alternative method is adopted in Conograph.
This will be introduced at another time.
}
\end{minipage}
\end{table}

In the following, we provide an explanation of the parameters and formulas needed to compute AUTO.

\begin{enumerate}[(1)]
	\item {\bf Zero-point shift $\Delta 2 \theta$.}
		Depending on the type of diffractometers, 
		the $x$-axis of a powder diffraction pattern is represents a diffraction angle $2 \theta$ or a time-of-flight $t$ given by the following function of $q$-values.
		\begin{eqnarray}
			2 \theta &=& 2 \sin^{-1}\frac{\lambda \sqrt{q}}{2}  + \Delta 2 \theta, \\
			t &=& \sum_{i=1}^n c_i q^{-i/2}.
		\end{eqnarray}
		Using these equations, $x$-coordinates are transformed to $q$-values before executing powder auto-indexing.
		Among $\lambda$, $\Delta 2 \theta$ and $c_i$ ($1 \leq i \leq n$),
		only the value of $\Delta 2 \theta$ is unknown.
 		Conograph users are recommended to set $\Delta 2\theta = 0$ normally, because
 		successful results were obtained
 		even in cases of very large zero-point shift, such as $\Delta 2 \theta \approx 0.2$ degree.
 		After auto-indexing, it is possible to refine the
 		lattice parameters and zero-point shift simultaneously using a nonlinear least-squares method.

	\item \label{item:Tolerance level}{\bf Tolerance level $c$ for errors in $q$-values.}
		Table \ref{Enumeration of the two dimensional lattices enumTwoDimLattices} provides a usage example of $c$.  
		By setting a larger value of $c$,
		a wider range of combinations of $q$-values is searched. 

	\item \label{item:Number of used q-values}{\bf Number of $q$-values used.}
		This parameter determines the size of $\Lambda^{obs}$, an array of input $q$-values.
		After sorting the $q$-values in $\Lambda^{obs}$ into ascending order,
		the $(N_{peak}+1)$th-to-last parameters are removed before the powder auto-indexing commences. 
		As explained in Section \ref{Problems on quality of powder diffraction patterns}, both the computation time and success rate increase as $N_{peak}$ is magnified.
		The default value of $N_{peak}$ is calculated by:
		\begin{eqnarray}\label{eq:formula of N_{peak}}
			N_{peak} &:=& {\rm min}\left\{ \sharp\{ q < 10 / d^2 : q \in \Lambda^{obs} \}, 48 \right\},
		\end{eqnarray}
		where $\sharp T$ is the cardinality of the set $T$, 
		and $d$ is the lower threshold for the distance between two lattice points.
		In Conograph, $d$ is set to $2.0 \AA$.
		
		In (\ref{eq:formula of N_{peak}}), $N_{peak} \leq 48$ is forced,
		because it is frequently meaningless to use large $q$-values owing to severe peak overlap. 
		However, 48 is still much larger than the 20--30 that are normally adopted for powder auto-indexing.
		Conograph uses such many $q$-values for another reason;
		in ($A$\ref{item:size of cells}) and the paragraphs following ($A$\ref{item:q_{max} - q_{min}}),
		we explained 
		that $q_{max} > D_N := 3 \cdot 2^{-1/3} d^{-2} \approx 0.6 \AA^{-2}$ is required at least theoretically. 
		(In (\ref{eq:formula of N_{peak}}), $q_{max} > 4 D_N$ is adopted.)
		Figure \ref{Range [q_{min}, q_{max}] of the smallest N_{peak} q-values} presents the range of the first $N_{peak}$ $q$-values 
		in the test data of Table \ref{Range [q_{min}, q_{max}] of the smallest N_{peak} q-values}.
		Owing to the threshold of 48, the interval $[q_{min}, q_{max}]$ 
		is frequently much smaller than $0.6 \AA^{-2}$, although powder auto-indexing succeeded in all the test data.
		We suppose 48 $q$-values might be insufficient in some exceptional cases.
		Users are recommended to increase $N_{peak}$ manually,
		if results obtained with the default parameters are unsatisfactory.
	
\begin{figure}
\begin{minipage}{\textwidth}
\begin{center}
\scalebox{0.65}{\includegraphics{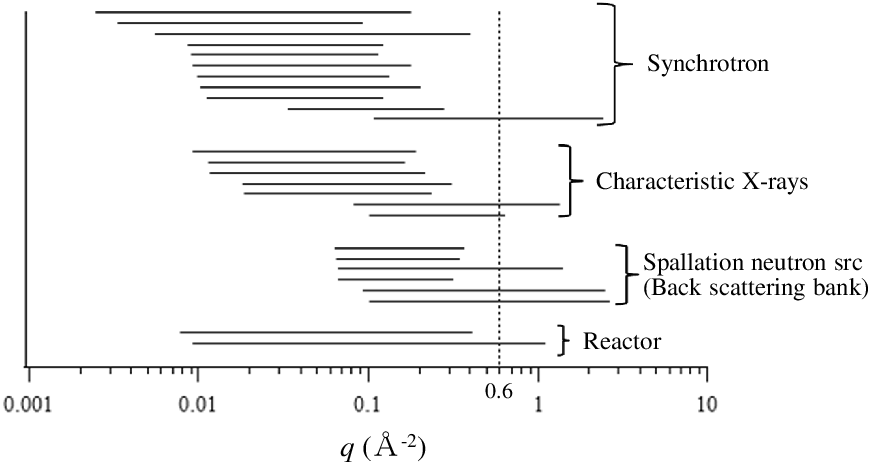}}
\end{center}
\footnotetext[1]{
If $q_{max} > 0.6 \AA^{-2}$ is imposed,
the interval $[q_{min}, q_{max}]$ often includes more than several tens of $q$-values of diffraction peaks.
We should not increase $q_{min}$,
because smaller $q$-values have better accuracy.
As a result, $[q_{min}, q_{max}]$ set by the default parameters often does not satisfy the theoretical requirement $q_{max} > 0.6 \AA^{-2}$. 
Nevertheless, powder auto-indexing succeeded in all the test data.
This is considered to be because all our test data satisfy $d \geq 2.7 \AA$ (and many also satisfy $d \geq 4 \AA$),
and because the formula (\ref{eq:formula by Lagarias}) of Lagarias \textit{et al.\ } provides an overestimation of $D_3$.
}
\end{minipage}
\caption{Range $[q_{min}, q_{max}]$ of $N_{peak}$ $q$-values used in powder auto-indexing.}
\label{Range [q_{min}, q_{max}] of the smallest N_{peak} q-values}
\end{figure}
		
	\item {\bf Threshold for the maximum size of $(\{ Q_1, Q_2 \}, \{ Q_3, Q_4 \})$.}
		\begin{eqnarray}\label{eq:definition of N_{zone}}
			N_{zone} := \frac{1}{3} N_{peak} (N_{peak} + 1).
		\end{eqnarray}

	\item {\bf Threshold for the maximum number of candidate solutions.} 
		\begin{eqnarray}
			N_{sol} := {\rm min}\{ 64000, N_{zone}^2 \}.
		\end{eqnarray}
		
	\item {\bf Threshold for the minimum and maximum of the volume of $\RealField^3 / L$.}
		\begin{eqnarray}
			{\rm Vol}_{min} &:=& {\rm max}\{ 5, v_{20}^{-1} \}, \label{eq:definition of Vol_{min}} \\
			{\rm Vol}_{max} &:=& 30 {\rm Vol}_{min}, \label{eq:definition of Vol_{max}}
		\end{eqnarray}
		where $5 \AA^3$ is chosen as the lower threshold for the volumes of existing crystals,
		and $v_{20}$ is the upper bound of ${\rm Vol}(\RealField^3 / L^*)$, 
		estimated using the 20 smallest elements of $\Lambda^{obs} := \{ q_1, q_2, \ldots, q_{N_{Peak}} \}$ by:
		\begin{eqnarray}\label{eq:formula of c_j}
			v_j := \frac{2 \pi}{3}  \frac{ q_j^{3/2} - q_1^{3/2} }{j - 1}.
		\end{eqnarray}
		Equation (\ref{eq:formula of c_j}) is based on the following formula, which holds for any $0 < r < R$. 
		\begin{eqnarray}
			& & \hspace{-10mm}
			\frac{2 \pi(R^3 - r^3)}{3} \abs{ \{ r^2 \leq \abs{l^*}^2 \leq R^2 : l^* \in L^* \} }^{-1} \nonumber \\
 			& \geq & 
			\frac{4 \pi(R^3 - r^3)}{3} \abs{ \{ l^* \in L^* : r^2 \leq \abs{l^*}^2 \leq R^2 \} }^{-1}
			\rightarrow {\rm Vol}(\RealField^3 / L^*) \text{ as } R \rightarrow \infty. \hspace{10mm}
		\end{eqnarray}
		Note that 20 and 30 are chosen empirically,
		and $v_{N_{Peak}}$ is used instead of $v_{20}$ if $N_{Peak} < 20$.
		We have found no cases when
		this $[ {\rm Vol}_{min}, {\rm Vol}_{max}]$ fails to contain the correct volume.
\end{enumerate}

\subsection{Results}
\label{Results}

We prepared 26 + 4 powder diffraction patterns as test data.
A summary of the first 26 test data is presented in Table \ref{Summary of test data}.
The remaining four are presented in 
Examples \ref{exam:Non-unique solutions}--\ref{exam:bad quality} to illustrate some rather special cases.

\begin{table}[htbp]
\caption{Summary of 26 test data$^a$}
\label{Summary of test data}
\begin{minipage}{\textwidth}
	\begin{tabular}{lll}
		\hline
		{\bf Diffractometer} & number of patterns \\
			Synchrotron & 11 \\
			Characteristic X-rays & 7 \\
			Spallation neutron sources (time-of-flight) & 6 \\
			Reactor neutron sources & 2 \\
		\hline
		{\bf Symmetry of lattice} \\
			Triclinic & 7 \\ 
			Monoclinic (P) & 4 \\ 
			Monoclinic (B) & 1 \\
			Orthorhombic (P) & 5 \\
			Tetragonal (P) & 2 \\
			Tetragonal (I) & 1 \\
			Rhombohedral & 1 \\
			Hexagonal & 1 \\
			Cubic (I) & 1 \\
			Cubic (F) & 3 \\
		\hline
		{\bf Distribution of absolute zero-point shifts$^b$} \\
			    -- 0.1 &  17 \\
			0.1 -- 0.15 & 1 \\
		    0.15 --          & 2 
	\end{tabular}
\footnotetext[1]{
Among eight samples distributed in SDPDRR-2,
samples 1--4, 8 are included herein.
The result of sample 7 is used in Example \ref{exam:bad quality}.
Samples 5, 6 are excluded this time, because we could not obtain solutions with de Wolff figures of merit $M_{20} > 10$
by using all the peaks with sufficiently large intensities.
These patterns seem to contain a considerable number of false peaks.
}
\footnotetext[2]{
The zero-point shifts were computed after execution of powder auto-indexing by non-linear least squares method.
Time-of-flight data are excluded here.
}
\end{minipage}
\end{table}

We now evaluate how the method in Section \ref{Speed-up using topographs} improved enumeration times.
In Section \ref{Lattice determination from weighted theta series in N = 3}, we explained that the time is roughly proportional to $N_{zone}^2$. 
From the formula (\ref{eq:definition of N_{zone}}),
the computation time of the algorithm in Table \ref{Enumeration of the three dimensional lattices enumTwoDimLattices} is approximately proportional to ${N_{peak}}^4$.
Figure \ref{Computation time of Conograph} illustrates the relation between the following pairs in practical tests:
\begin{itemize}
	\item number of $q$-values $N_{peak}$ and time for enumeration,
	\item number of $q$-values $N_{peak}$ and total time for powder auto-indexing,
	\item number of enumerated solutions and time for enumeration,
	\item number of enumerated solutions and total time for powder auto-indexing.
\end{itemize}


\begin{figure}
\begin{minipage}{\textwidth}
\begin{minipage}{0.52\textwidth}
\scalebox{0.5}{\includegraphics{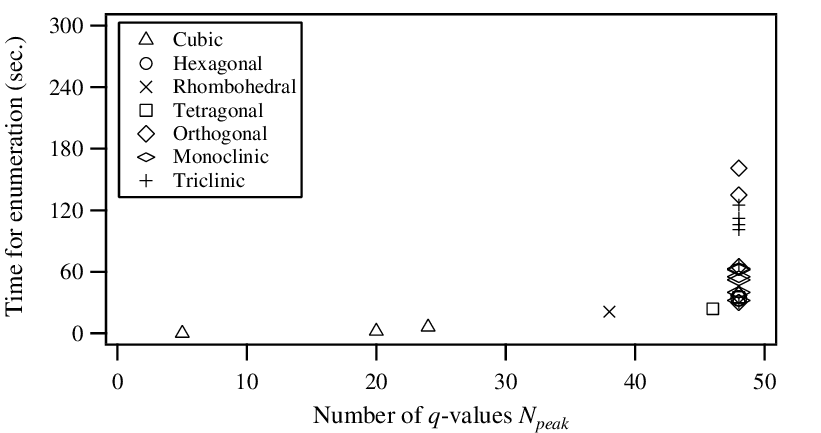}}
\end{minipage}
\begin{minipage}{0.52\textwidth}
\scalebox{0.5}{\includegraphics{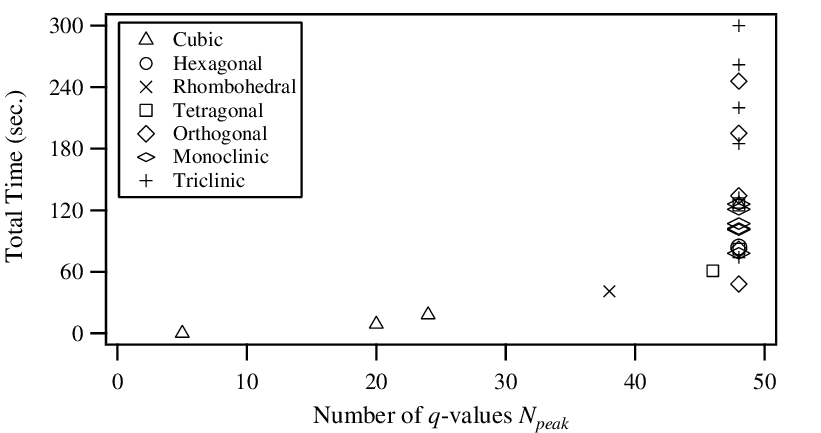}}
\end{minipage}
\begin{minipage}{0.52\textwidth}
\scalebox{0.5}{\includegraphics{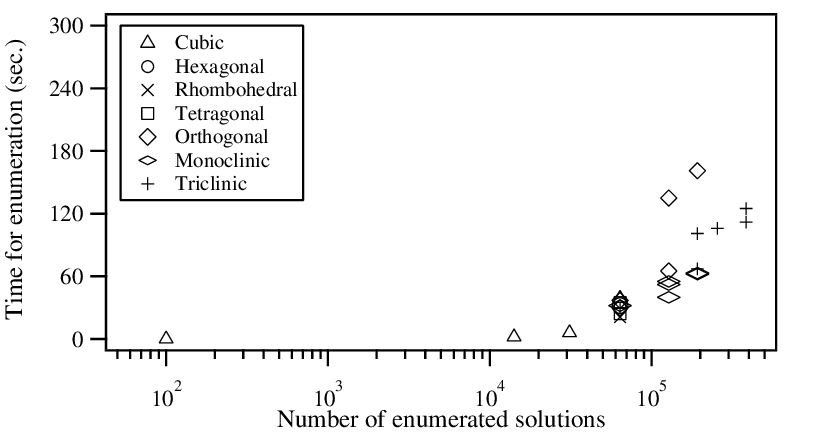}}
\end{minipage}
\begin{minipage}{0.52\textwidth}
\scalebox{0.5}{\includegraphics{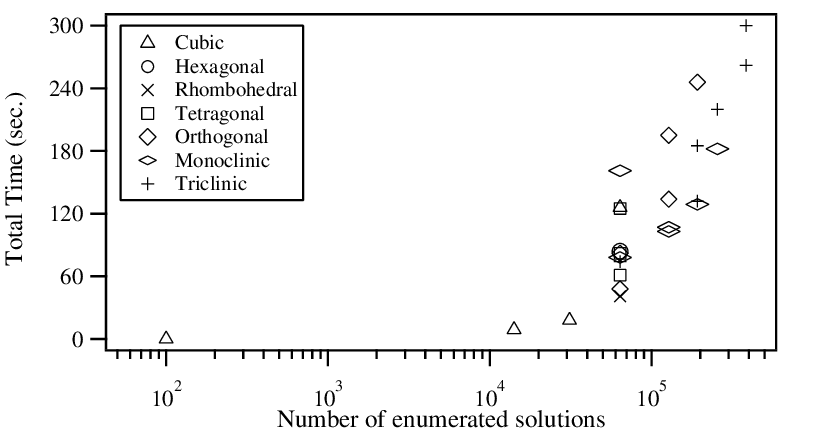}}
\end{minipage}
\footnotetext[1]{
Total time includes a) Enumeration of solutions, 
b) Transformation of every solution to a reduced form,
c) Computation of figures of merit,
d) Refinement of solutions using linear optimization,
e) Error-stable Bravais lattice determination, 
f) Removal of solutions having a figure of merit less than a user-input threshold,
g) Removal of duplicate solutions,
h) Sorting solutions with a selected figure of merit.
Of these, a) and e) 
are the most time-consuming.  
The time required for a) consumed approximately half of the total time.
We have already contributed to reducing the time for e) in \cite{Tomiyasu2012}.
}
\end{minipage}
\caption{Computation time of Conograph.}
\label{Computation time of Conograph}
\end{figure}

Figure \ref{Decrease ratio of the size of A_2} shows the rate of decrease in the size of $A_2$ as a result of applying the method described in Section \ref{Speed-up using topographs}. 

\begin{figure}
\begin{minipage}{\textwidth}
\begin{minipage}{0.52\textwidth}
\scalebox{0.5}{\includegraphics{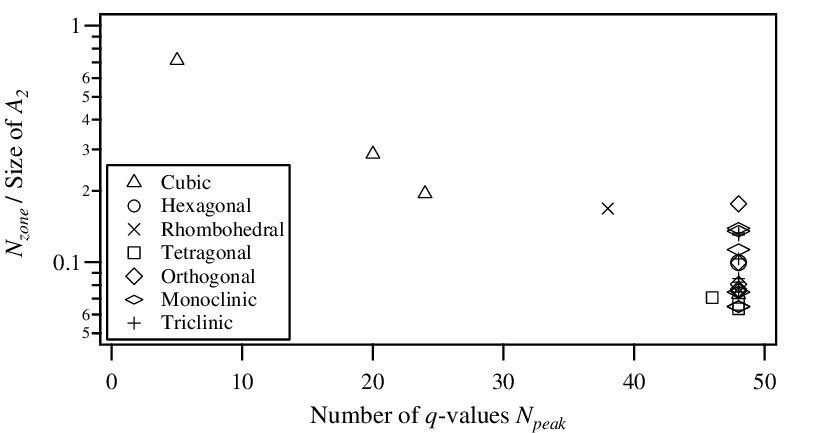}}
\end{minipage}
\begin{minipage}{0.52\textwidth}
\scalebox{0.5}{\includegraphics{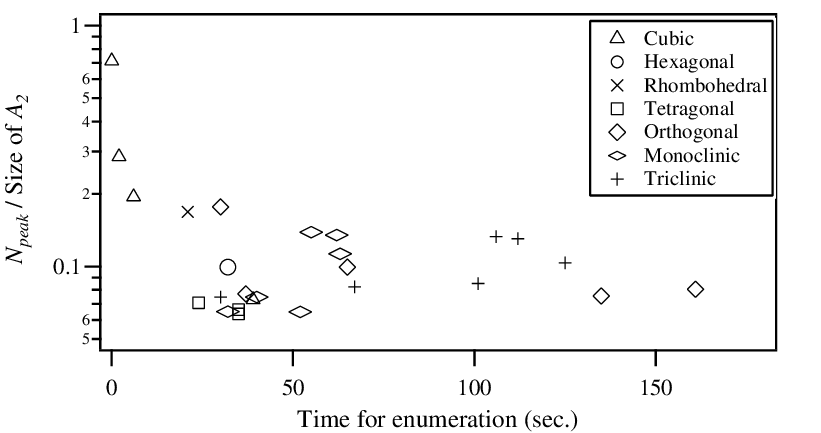}}
\end{minipage}
\footnotetext[1]{
Except for cases of small $N_{peak}$,
all the rates are in the range 0.06--0.18.
Under the assumption that the time for enumeration is proportional to $N_{zone}^2$,
the enumeration is about 32--250 times faster as a consequence.
The right-hand figure
indicates that a larger improvement was made in more time-consuming cases.
}
\end{minipage}
\caption{Rates of elements of $A_2$ used in enumeration algorithm.}
\label{Decrease ratio of the size of A_2}
\end{figure}

Even in the following difficult cases,
solutions were obtained without special parameter settings.
By Conograph, reliable powder auto-indexing results will become available even for less-experienced users.

\begin{example}\label{exam:Non-unique solutions}{\bf Non-unique solutions (Figure \ref{Results of Conograph} (a)).}
For any fixed $C > 0$,
the following lattice parameters have exactly the same $q$-values (\CF \ref{item:Case of N=3}.\ in Appendix \ref{Summary of known theorems on multiple solutions}).
\begin{eqnarray}
\text{Cubic(P)} &:& a = b = c = C, \alpha = \beta = \gamma = 90, \\ 
\text{Tetragonal(P)} &:& \sqrt{2} a = \sqrt{2} b = c = C, \alpha = \beta = \gamma = 90. 
\end{eqnarray}
Conograph succeeded in finding both of these.
\end{example}

\begin{example}\label{exam:small q lost}{\bf Small $q$-values are lost (Figure \ref{Results of Conograph} (b)).}
When the size of the unit cell is large,
many $q$-values are frequently lost because they are smaller than $q_{min}$.  
We have confirmed that Conograph is very robust to such a loss.
This example presents the case in which the 19 smallest non-zero $q$-values are not  
included in the observed range $[q_{min}, q_{max}]$. 
\end{example}

\begin{example}\label{exam:two phase data}{\bf Two-phase data (Figure \ref{Results of Conograph} (c)).}
This powder diffraction pattern is a two-phase sample with the mass ratio $58 : 42$.
The lattice parameter of the second phase was also enumerated.
\end{example}

\begin{example}\label{exam:bad quality}{\bf Poor quality powder diffraction pattern (Figure \ref{Results of Conograph} (d)).}
This is sample 7 distributed in SDPDRR-2.
According to a recent personal report from Le Bail, crystal structures other than sample 7 have been determined.
Conograph obtained three reasonable solutions.
\end{example}

\begin{figure}
\begin{minipage}{\textwidth}
\begin{minipage}{0.52\textwidth}
(a) Case of more than one solution. \\
\scalebox{0.5}{\includegraphics{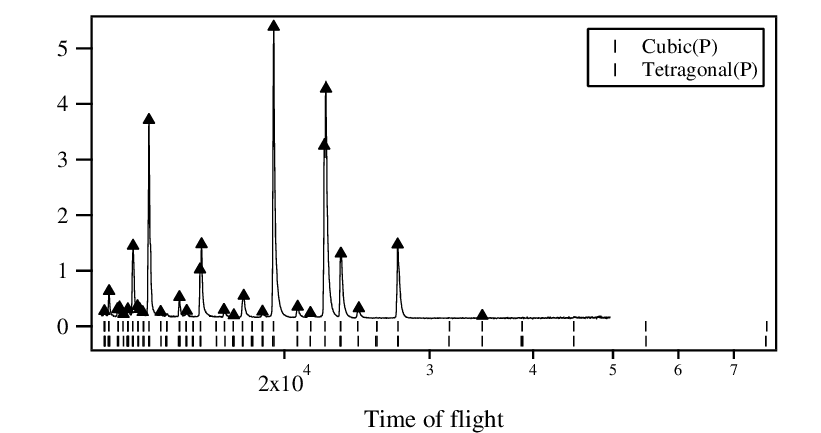}}
\end{minipage}
\begin{minipage}{0.52\textwidth}
(b) The 19 smallest $0 \neq q \in \Lambda_{L^*}$ are lost. \\
\scalebox{0.5}{\includegraphics{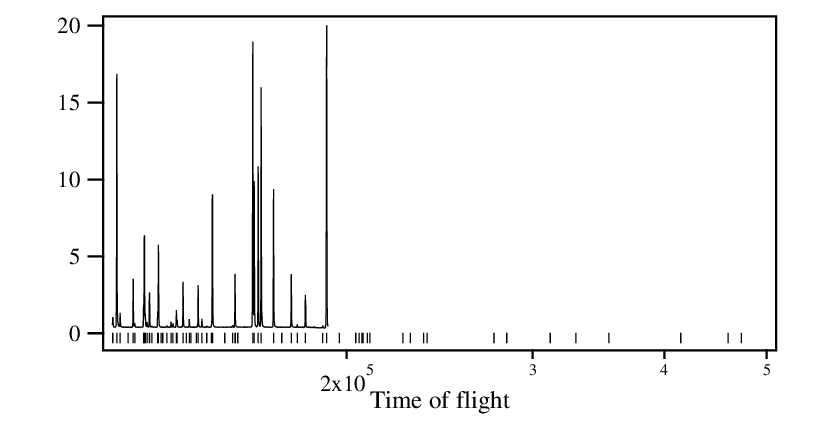}}
\end{minipage}
\begin{minipage}{0.52\textwidth}
(c) Two-phase data. \\
\scalebox{0.5}{\includegraphics{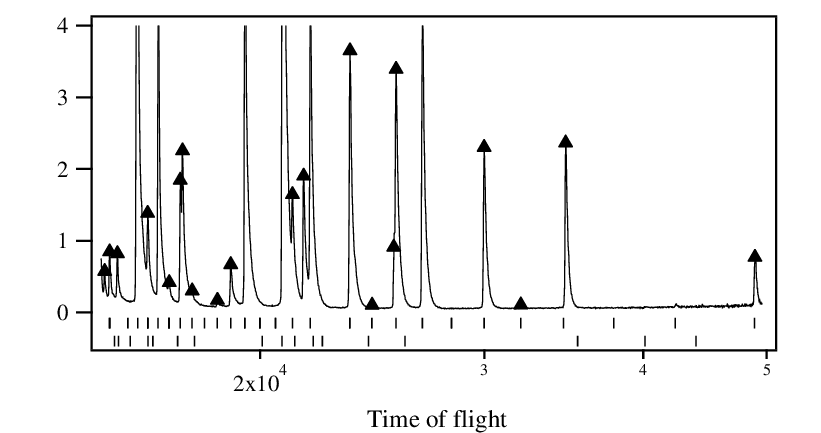}}
\end{minipage}
\begin{minipage}{0.52\textwidth}
(d) Poor quality powder pattern. \\
\scalebox{0.5}{\includegraphics{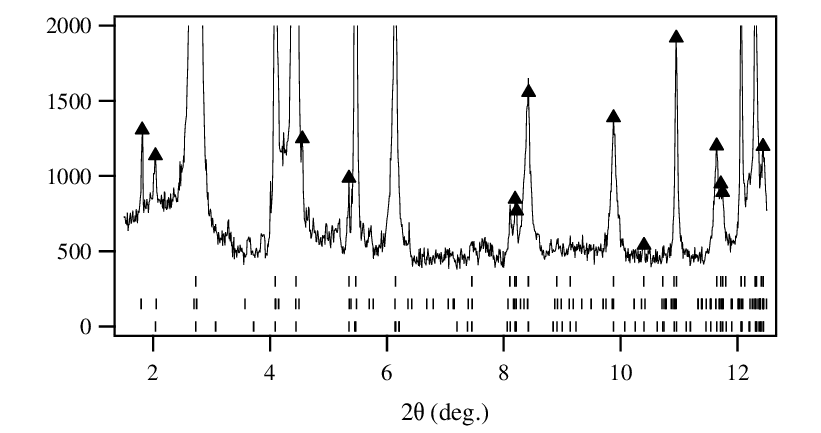}}
\end{minipage}
\footnotetext[1]{
Triangles indicate peak positions detected by a peak-search program (corresponding to $q$-values in $\Lambda^{obs}$). 
Tick marks represent the peak positions corresponding to $q \in \Lambda_{L^*} := \{ \abs{l^*}^2 : l^* \in L^* \}$. 
It is seen that two different lattices have exactly the same $q$-values.
Both parameters are detected by Conograph.
}
\footnotetext[2]{
This example presents the case in which the 19 smallest non-zero $q$-values are not  
in the observed range. 
It is confirmed that Conograph is very robust to such a loss.
The lattice parameters are $a = 8.82$, $b=9.76$, $c=9.78$, $\alpha=\gamma=90$, $\beta=104$,
and a diffraction pattern from a back-scattering bank of neutron sources is used.
}
\footnotetext[3]{
This powder diffraction pattern is a case of a two-phase sample with the mass ratio $58 : 42$.
\begin{eqnarray}
\text{Upper} &:&  a = b = c = 12.0\ (\text{Cubic(I)}, M_{20} = 7.0), \\
\text{Lower} &:&  a = b = 4.8, c = 13.0\ (\text{Rhombohedral}, M_{20} = 1.4).
\end{eqnarray}
The unit cell of the first phase
is much larger than that of the second phase.
As a result, the lattice parameter of the first phase 
gained the best de Wolff figure of merit among all the enumerated solutions, regardless of peaks derived from the second phase.
The lattice parameter of the second phase was also enumerated by Conograph.
This supports our claim in Section \ref{Problems on quality of powder diffraction patterns}
that the enumeration process is robust to missing and false elements of $\Lambda^{obs}$.  
However, the figure of merit of the second phase
was considerably small, owing to the first-phase peaks.
Therefore, it is almost impossible to judge which one is the correct second-phase lattice.
}
\footnotetext[4]{
This is sample 7 distributed in SDPDRR-2, held in 2002.
The tick marks are peak positions of the following solutions output by Conograph:
\begin{eqnarray}
\text{Upper} &:&  a = 3.99, b=11.5, c=17.1, \alpha=77.9, \beta=84.6, \gamma=80.4\ (M_{20} = 6.6), \\
\text{Middle} &:& a = 4.07, b=22.5, c=25.7, \alpha=88.8, \beta=87.4, \gamma=84.8\ (M_{20} = 7.7), \\
\text{Lower} &:&  a = 3.95, b=17.1, c=22.8, \alpha=78.4, \beta=,86.6, \gamma=84.1\ (M_{20} = 12.9).
\end{eqnarray}
The upper lattice parameter is the solution proposed by participants in SDPDRR-2.
This regards the two leftmost peaks to be false.
The others are newly found by Conograph.
The second solution assumes that some peaks are embedded in background noise,
and the third is intermediate.
This result proves that Conograph makes the stage of enumeration much more reliable.
Although the correct solution is provided by a larger $M_{20}$ normally,
this is not always right.
In particular, it becomes very difficult to select the best solution from peak positions alone,
when powder patterns are assumed to have a number of diffraction peaks embedded in background noise or false peaks as intense as diffraction peaks.
}
\end{minipage}
\caption{Results of Conograph.}
\label{Results of Conograph}
\end{figure}

\section{Conclusion}
\label{Conclusion}

Powder auto-indexing is divided into two main stages: enumeration and sort of solutions.
We contributed mainly to the stage of enumeration by providing a quick and strongly reliable algorithm.
For the purpose, Conway's definition of topographs was generalized to lattices of any rank $N$ using Voronoi's second reduction theory,
holding the association of edges and four lengths $\abs{l_1^*}$, $\abs{l_2^*}$, $\abs{l_1^*+l_2^*}$, $\abs{l_1^*-l_2^*}$ ($l_1^*, l_2^* \in L^*$).
By using common properties of systematic absences proved in Theorems \ref{fact:two-dimensional extinction rules}, \ref{thm:distribution rules on CT_{2, S_2}} and \ref{thm:distribution rules on CT_{3, S_3}},
the algorithm is shown to work regardless of the type of systematic absences.
This properties are stated 
as distribution rules for reciprocal lattice vectors corresponding to systematic absences on a topograph.
Such rules have not been known so far, and will be also useful in other problems of crystallography. 
Our enumeration algorithm was implemented in Conograph.
Conograph obtained successful results, even for difficult cases.
These examples proved that the new enumeration method is robust against missing and false elements in 
the set of lattice vector lengths extracted from a powder diffraction pattern.
Topographs were also utilized to speed up our enumeration algorithm.
In practical tests, we found that the improvement reduced the enumeration time to 1/250--1/32.


\paragraph{Acknowledgments}
The author would like to extend her gratitude to Professor T. Oda of the University of Tokyo for his daily encouragements,
to Visible Information Inc. for their cooperation in implementing the Conograph GUI, 
and to Professor E. Hitzer of International Christian University for proposing the impressive name ``Conograph.'' I would also like to thank Dr. K. Fujii and Professors H. Uekusa, T. Ozeki of the Tokyo Institute of Technology, Dr. S. Torii, Dr. J. Zhang, Dr. M. Ping, and Professors M. Yonemura, T. Kamiyama of KEK, and Professors A. Hoshikawa and T. Ishigaki of Ibaraki University
for their valuable comments and for offering test data.
This research was partly supported by a Grant-in-Aid for Young Scientists (B) (No. 22740077) and the Ibaraki Prefecture (J-PARC-23D06).

\appendix

\section{On equivalence between powder diffraction patterns and average theta series}
\label{On equivalence between powder diffraction pattern and average theta series}

For any periodic function $\wp$ with the period lattice $L$ satisfying 
$\int_{ \RealField^N / L } \abs{ \wp(x) } dx < \infty$,
an \textit{average theta series} $\Theta_\wp(z)$ is defined by: 
\begin{eqnarray}\label{eq:definition of a weighted theta series}
	\Theta_\wp(z) := \frac{1}{{\rm vol}(\RealField^N/L)}
		\sum_{l \in L} \int_{ (\RealField^N / L)^2 } \wp(x) \wp(y) e^{\pi \sqrt{-1} z |x-y+l|^2} dx dy, 
 \end{eqnarray}
where ${\rm vol}(\RealField^N/L)$ is the volume of $\RealField^N / L$. ($\Theta_\wp(z)$ is invariant 
if a sublattice $L_2 \subset\neq L$ is regarded as the period of the same $\wp$.
This definition is a generalization of the average theta series defined in 2.3, Chapter2 of \cite{Conway98}.)
The infinite sum converges uniformly and absolutely in any compact subset of $\{ z \in \ComplexField : {\rm Im}(z) > 0 \}$. 

Using the Poisson summation formula, the functional equation for $\Theta_\wp(z)$ is obtained:
\begin{eqnarray}\label{eq:functional equation}
	\Theta_\wp(z)
	= \left( \frac{\sqrt{-1}}{z} \right)^{N/2} \sum_{l^* \in L^*} e^{-\frac{\pi \sqrt{-1}}{z} \abs{l^*}^2} | \hat{\wp}(l^*) |^2,
\end{eqnarray}
where $\hat{\wp}(l^*) := {\rm vol}(\RealField^N/L)^{-1} \int_{\RealField^N / L} \wp(x) e^{-2 \pi \sqrt{-1} x \cdot l^* } dx$.

Assuming that $f_{powder}(q; \wp)$ in (\ref{eq:powder diffraction pattern}) equals $0$ for any $q < 0$,
the Fourier transform of $(2 \sqrt{q})^{-1} f_{powder}(q; \wp)$ is an average theta series.
\begin{eqnarray}
	\int_{\RealField} f_{powder}(q; \wp) e^{2 \pi \sqrt{-1} q z} \frac{d q}{ 2 \sqrt{q} }
	&=& \int_{\RealField} \left( \int_{|x^*|^2 = q} \sum_{l^* \in L^*} |\hat{\wp}(l^*)|^2 \delta(x^* - l^*) dx^* \right) e^{2 \pi \sqrt{-1} q z} \frac{d q}{ 2 \sqrt{q} } \nonumber \\
	&=& \int_{\RealField^N} \sum_{l^* \in L^*} |\hat{\wp}(l^*)|^2 \delta(x^* - l^*) e^{2 \pi \sqrt{-1} \abs{x^*}^2 z} dx^* \nonumber \\
	&=& \sum_{l^* \in L^*} e^{2 \pi \sqrt{-1} \abs{l^*}^2 z} |\hat{\wp}(l^*)|^2 = (-2 \sqrt{-1} z)^{-N/2} \Theta_{\wp} \left( -\frac{1}{2z} \right). \hspace{10mm}
\end{eqnarray}

Therefore, information obtained from a powder diffraction pattern 
is theoretically equivalent to that from an average theta series.

\section{Theorems on the cardinality of solutions}
\label{Summary of known theorems on multiple solutions}

It is well known that the equivalence class of $S_0 \in {\mathcal S}^N_{\succ 0}$ is not uniquely determined,
even if all elements of $\Lambda_{S_0} := \{ \tr{v} S_0 v : 0 \neq v \in \IntegerRing^N \}$ are provided. 
However, for $N \leq 4$, it is possible to obtain a finite set containing all the equivalence classes of $S \in {\mathcal S}^N_{\succ 0}$ with 
$\Lambda_S = \Lambda_{S_0}$ (\CF Appendix \ref{Appendix:lattice determination algorithm from complete set of the lengths of lattice vectors}).

In this section, several known theorems about the cardinality of solutions are summarized for reference.
\begin{enumerate}
	\item {\bf Case $N=1$.} 
	In this case, $\Lambda_\wp$ in (\ref{item:peak-positions}) generates $L^*$ over $\IntegerRing$.
(Otherwise, let $L_2^* \subsetneq L^*$ be the lattice generated by $\Lambda_\wp$.
Then, the reciprocal lattice $L_2$ of $L_2^*$ is the period lattice of $\wp$, 
since $\wp(x) = \sum_{l^* \in L_2^*} \hat{\wp}(l^*) e^{2 \pi \sqrt{-1} x \cdot l^*}$ holds. This is a contradiction.)
Therefore, the determination of $L$ 
is straightforward, even from $\Lambda_\wp$.

	\item {\bf Case $N=2$.}
		It was shown by Delone (and independently by Watson \cite{Watson79}, \cite{Watson80}) that, up to a factor, the following is the only pair of inequivalent positive-definite symmetric matrices that have the same representations over $\IntegerRing$.
		\begin{eqnarray}
			\begin{pmatrix}
				2 & 1 \\
				1 & 2 
			\end{pmatrix},
			\begin{pmatrix}
				2 & 0 \\
				0 & 6 
			\end{pmatrix}.
		\end{eqnarray}

	\item \label{item:Case of N=3} {\bf Case $N=3$.}
		From the case of $N = 2$, an infinite family of inequivalent pairs that have the same representations over $\IntegerRing$
		is obtained.
		\begin{eqnarray}
			\begin{pmatrix}
				2 & 1 & 0 \\
				1 & 2 & 0 \\
				0 & 0 & c 
			\end{pmatrix},
			\begin{pmatrix}
				2 & 0 & 0 \\
				0 & 6 & 0 \\
				0 & 0 & c 
			\end{pmatrix}.
		\end{eqnarray}

		The following is another example (see \cite{Moon2008}\nocite{Moon2008} for the proof). 
		\begin{eqnarray}
			\begin{pmatrix}
				1 & 0 & 0 \\
				0 & 1 & 0 \\
				0 & 0 & 1 
			\end{pmatrix},
			\begin{pmatrix}
				1 & 0 & 0 \\
				0 & 2 & 0 \\
				0 & 0 & 2 
			\end{pmatrix}.
		\end{eqnarray}
		A powder auto-indexing result of Conograph for this case is presented in 
		Example \ref{exam:Non-unique solutions} of Section \ref{Results}. 
		
	\item {\bf Case $N=4$.}
		$S \in {\mathcal S}^N_{\succ 0}$ is called \textit{universal}
		if $\Lambda_S$ equals $\IntegerRing_{> 0}$, the set of all positive integers.
		It was confirmed by Bhargava and Hanke
		that the number of equivalence classes of universal $S \in {\mathcal S}^4_{\succ 0}$
		equals 6436 \cite{Bhargava2005}. 

	\item {\bf Case $N \geq 5$.}
		From the existence of universal quadratic forms in $N = 4$,
		there are infinitely many $S \in {\mathcal S}^5_{\succ 0}$ such that $\Lambda_S = \IntegerRing_{> 0}$.
\end {enumerate}

\section{Lattice determination from a complete set of lattice vector lengths}
\label{Appendix:lattice determination algorithm from complete set of the lengths of lattice vectors}

In this section, for any $N \leq 4$ and a given $\Lambda_{S_0} \subset \RealField_{> 0}$ of some $S_0 \in {\mathcal S}_{\succ 0}^N$,
an algorithm to enumerate all the equivalence classes of $S \in {\mathcal S}^N_{\succ 0}$
satisfying $\Lambda_{S_0} = \Lambda_S$ is introduced.

First, we recall that
$S := (s_{ij})_{1 \leq i, j \leq N} \in {\mathcal S}_{\succ}^N$
is \textit{Minkowski-reduced} if and only if $S$ satisfies
	\begin{eqnarray}\label{eq:condition1}
		s_{ii} = \min \{ \tr{ v_i } S v_i : \{ \mathbf{e}_1, \ldots, \mathbf{e}_{i-1}, v_{i} \} \text{ is a primitive set of } \IntegerRing^N \}. 
	\end{eqnarray}

When $S$ is Minkowski-reduced,
the entries of $S$ satisfy
\begin{eqnarray}\label{eq:condition2}
	0 < s_{11} \leq \cdots \leq s_{NN},\ 2 \abs{ s_{ij} } \leq s_{ii}\ (1 \leq i < j \leq N).
\end{eqnarray}


Table \ref{Recursive procedure} gives a recursive procedure to generate all the candidates for $S$
from $\Lambda^{obs} := \Lambda_{S_0} \cap (0, c)$, when $0 < c \leq \infty$ is large enough.
If the recursive procedure is started with arguments $m=n=I=J=1$, 
all positive-definite symmetric matrices $S := (s_{ij})_{1 \leq i, j \leq N}$ satisfying the followings 
are enumerated in an array $Ans$.
\begin{eqnarray}\label{eq:condition3}
	\begin{cases}
		s_{11} \leq \cdots \leq s_{NN},\ 
		2 \abs{ s_{ij} } \leq s_{ii} \ (1 \leq i < j \leq N),\\ 
		s_{i i+1} \leq 0 \ (1 \leq i < N),\\
		\{ s_{ii} : 1 \leq i \leq N \} \cup
		\{ s_{ii}+s_{jj}+2s_{ij} : 1 \leq i < j \leq N \} \subset \Lambda^{obs}.
	\end{cases}
\end{eqnarray}

\begin{small}
\begin{table}
	\caption{Recursive procedure for lattice determination from a perfect set of representations.}
	\label{Recursive procedure}
	\begin{minipage}{\textwidth}
	\begin{tabular}{lccp{90mm}}
	\hline
	\multicolumn{4}{c}{{\bf void enumerateLattice($N, \Lambda^{obs}, m, n, S, I, J, Ans$)}} \\
	(Input) & $1 \leq N \leq 4$ & : & number of rows and columns of $S_0 \in {\mathcal S}^N_{\succ 0}$, \\
	& $\Lambda^{obs} := \langle q_1, \ldots, q_M \rangle \neq \emptyset$ & : & a sorted sequence of all the elements of $\Lambda_{S_0}$ that belong to the interval $(0, c)$,  \\
	& $m, n$ & : & integers $1 \leq m \leq n \leq N$ indicating the $(m, n)$-entry of $S$, \\ 
	& $S := (s_{ij})$ & : & $N \times N$ symmetric matrix that fulfills \\
	& & & 	
						$0 < s_{11} \leq \cdots \leq s_{n n}$,
						$2 \abs{ s_{ij} } \leq s_{ii}$ ($1 \leq i < j < n$),
						$s_{i i+1} \leq 0$ ($1 \leq i \leq n$).
		\\
	& $I, J$ & : & integers indicating \\
	& & & 	$\begin{cases}
					s_{nn} \in \{ q_i : I \leq i \leq J \} & \text{if } m = n, \\
					s_{nn} = q_I \text{ and } s_{mm} + s_{nn} + 2 s_{mn} \in \{ q_i : I \leq i \leq J \} & \text{otherwise.}
			\end{cases}$ \\
	(Output) & $Ans$ & : & array of positive-definite $N \times N$ symmetric matrices. \\
	\end{tabular}
	\vspace{5mm}
	\begin{tabular}{p{5mm}p{10mm}p{110mm}}
	1: & (start)  & for $l=I$ to $J$ do \\
	2: & & \hspace{5mm}	if $m=n$ then \\
	3: & & \hspace{10mm}	          $s_{n n} = q_{l}$. \\
	4: & & \hspace{5mm}	else \\
	5: & & \hspace{10mm}            $s_{m n} = s_{n m} = \frac{1}{2}(q_{l} - s_{m m} - s_{n n})$. \\
	6: & & \hspace{5mm} end if \\
	7: & & \hspace{5mm}	if $m=1$ then \\
	8: & & \hspace{10mm}		if $\det (s_{ij})_{1 \leq i, j \leq n} > 0$ then \\
	9: & & \hspace{15mm}			if $n \geq N$ then \\
	10: & & \hspace{20mm}					Insert $S$ in $Ans$. \\ 
	11: & & \hspace{15mm}			else  \\
	12: & & \hspace{20mm} 					${m_2}^a := \max \left\{ 1 \leq i \leq M :  
												\begin{matrix}
													q_{1}, \ldots, q_{i-1} \text{ are representations of} \\
													\text{ the submatrix } 
																	(s_{ij})_{1 \leq i, j \leq n} \text{ over } \IntegerRing 
												\end{matrix} \right\}$. \\
	13: & & \hspace{20mm}		if $m=n$ then \\
	14: & & \hspace{25mm}    				Call searchLattice($N, \Lambda^{obs}, n+1, n+1, S, l, m_2, Ans$). \\
	15: & & \hspace{20mm}		else \\
	16: & & \hspace{25mm}    				Call searchLattice($N, \Lambda^{obs}, n+1, n+1, S, I, m_2, Ans$). \\
	17: & & \hspace{20mm} end if \\
	18: & & \hspace{15mm} end if \\
	19: & & \hspace{10mm} end if \\
	20: & & \hspace{5mm}	else  \\
	21: & & \hspace{10mm}		if $m=n$ then \\
	22: & & \hspace{15mm}     		$m_2 := \max \{ 1 \leq i \leq M : q_i \leq s_{n-1 n-1} + s_{n n} \}$.  \\
	23: & & \hspace{15mm}   		Call searchLattice($N, \Lambda^{obs}, m-1, n, S, l, m_2, Ans$). \\
	24: & & \hspace{10mm}		else \\
	25: & & \hspace{15mm}     		$m_2 := \max \{ 1 \leq i \leq M : q_i \leq 2s_{m-1 m-1} + s_{n n} \}$. \\
	26: & & \hspace{15mm}   		Call searchLattice($N, \Lambda^{obs}, m-1, n, S, I, m_2, Ans$). \\
	27: & & \hspace{10mm}		end if \\
	28: & & \hspace{5mm} end if \\
	29: & & end for
	\end{tabular}
	\footnotetext[1]{Proposition \ref{lem:no sublattice} claims that there always exists $m_2 < \infty$,
	even if the sequence $\Lambda^{obs}$ is replaced by $\Lambda_{S_0}$ virtually.}
	\end{minipage}
\end{table}
\end{small}

Consequently, any Minkowski-reduced $S$ satisfying (\ref{eq:condition3}) and $3 s_{NN} \leq q_{M}$ is output in $Ans$.
As a result of Proposition \ref{lem:no sublattice},
the recursive procedure is always completed in a finite number of steps,
even if we set $c = \infty$, i.e., if $\Lambda_{S_0}$ is used instead of $\Lambda^{obs}$.
This indicates
all the equivalence classes of $S \in {\mathcal S}^N$
satisfying $\Lambda_S = \Lambda_{S_0}$ are enumerated by the recursive procedure,
if sufficiently large $c$ is selected.
Consequently,  
the number of equivalence classes of $S \in {\mathcal S}^N_{\succ 0}$ satisfying $\Lambda_S = \Lambda_{S_0}$ is finite
for any $\Lambda_{S_0} \subset \RealField_{> 0}$.

In the remainder of this section, we give a proof of Proposition \ref{lem:no sublattice}.
\begin{proposition}\label{lem:no sublattice}
Suppose that $S \in {\mathcal S}^N_{\succ 0}$ and $S_2 \in {\mathcal S}^{N_2}_{\succ 0}$
and $0 < N_2 < N \leq 4$.
Then, $\Lambda_{S_2} \not\supset \Lambda_S$.
\end{proposition}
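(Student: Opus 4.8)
The plan is to compare the growth of two counting functions attached to a positive-definite form. For $T \in {\mathcal S}^k_{\succ 0}$ put $N(T, X) := \sharp\{ v \in \IntegerRing^k : 0 < \tr{v} T v \leq X \}$ and $D(T, X) := \sharp(\Lambda_T \cap (0, X])$, so that $D(T, X) \leq N(T, X) = \omega_k (\det T)^{-1/2} X^{k/2} + o(X^{k/2})$ by the classical lattice-point estimate, $\omega_k$ being the volume of the unit ball in $\RealField^k$. First I would reduce to $N = N_2 + 1$: any sublattice $L' \subset \IntegerRing^N$ of rank $N_2 + 1$, with a Gram matrix $S' \in {\mathcal S}^{N_2+1}_{\succ 0}$ for the restriction of the quadratic form $S$, satisfies $\Lambda_{S'} \subseteq \Lambda_S$, so $\Lambda_S \subseteq \Lambda_{S_2}$ would give $\Lambda_{S'} \subseteq \Lambda_{S_2}$; hence it is enough to treat $N = n+1$, $N_2 = n$ with $n \in \{ 1, 2, 3 \}$. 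Assuming $\Lambda_S \subseteq \Lambda_{S_2}$ we obtain $D(S, X) \leq D(S_2, X) \leq N(S_2, X) = O(X^{n/2})$, and the whole statement reduces to showing this is impossible for $S \in {\mathcal S}^{n+1}_{\succ 0}$.

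The second step is a dichotomy on the arithmetic type of $S$. Writing the entries of $S$ in terms of a $\RationalField$-basis $1 = \omega_1, \omega_2, \ldots$ of the $\RationalField$-vector space they span gives $S = \sum_t \omega_t R_t$ with rational symmetric $R_t$, so $\tr{v} S v = \sum_t \omega_t \tr{v} R_t v$ and the whole tuple $(\tr{v} R_t v)_t$ of rationals is recovered from the single real number $\tr{v} S v$. If $S$ is not proportional to a rational matrix (more than one $\omega_t$), this rigidity forbids $\Lambda_S$ from being thin: either $D(S, X) \gg X^{n/2 + \delta}$ for some $\delta > 0$, which contradicts the bound above, or $S$ carries a rational sub-block of rank $n$ together with one genuinely irrational direction, in which case $\Lambda_S$ contains infinitely many pairwise-disjoint, irrationally shifted copies of the value set of that sub-block; matching all of them inside the value set $\Lambda_{S_2}$ of a single rank-$n$ form then forces $S_2$ to represent those values along a fixed rational subvariety (a cone, conic, or curve) of $\IntegerRing^n$, whose image under a quadratic form has only $O(X^{1/2}\log X)$ elements below $X$ --- again too few. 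Either way $\Lambda_S \not\subseteq \Lambda_{S_2}$.

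There remains the case $S = \lambda R$ with $R \in {\mathcal S}^{n+1}_{\succ 0}$ integral and primitive and $\lambda > 0$, so $\Lambda_S = \lambda \Lambda_R \subseteq \lambda \IntegerRing_{> 0}$. Applying the same rational-decomposition argument to $S_2$ shows $S_2$ is also proportional to a rational matrix (otherwise its value set meets $\lambda \IntegerRing$ in too sparse a set to contain $\lambda \Lambda_R$); after a common rescaling we may take $R$ and $S_2$ integral with $\Lambda_R \subseteq \Lambda_{S_2}$. Now $n \leq 3$ enters through classical arithmetic of quadratic forms. For $n = 1$, $\Lambda_{S_2} = \{ (S_2)_{11} m^2 : m \geq 1 \}$ has $O(X^{1/2})$ elements below $X$, while the binary form $R$ represents $\gg X (\log X)^{-1/2}$ integers below $X$ (Landau). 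For $n = 2$, the binary form $S_2$ represents $O(X (\log X)^{-1/2}) = o(X)$ integers below $X$ (Landau), whereas the ternary form $R$ represents $\gg X$ of them (Duke--Schulze-Pillot; for the genus, Siegel's mass formula). For $n = 3$, the quaternary form $R$ represents all but finitely many positive integers (circle method / Kloosterman), whereas every positive-definite integral ternary form $S_2$ misses a set of integers of positive density (it is anisotropic at some finite prime, giving a local obstruction). In each case $\Lambda_R \subseteq \Lambda_{S_2}$ fails. This is precisely the point at which the argument must break for $N = 5$, since quaternary forms can be universal.

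The step I expect to be the main obstacle is the ``irrational, non-generic'' branch of the dichotomy: a form $S$ built from a full-rank-$n$ rational part and an irrational complement sits exactly on the boundary $D(S, X) \asymp X^{n/2}$, so no contradiction follows from orders of magnitude alone, and one genuinely needs the structural statement above together with the sparsity bound for the values of a quadratic form restricted to a rational subvariety. Making this uniform over all possible ``amounts of rationality'' of $S$, and invoking the three number-theoretic inputs in the precise quantitative forms required, is where most of the work lies; the counting reduction and the purely rational case are comparatively routine.
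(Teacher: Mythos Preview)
Your proof has a genuine gap in the rational case $n = 3$. The assertion that a primitive integral positive-definite quaternary form ``represents all but finitely many positive integers'' is false: take $R = x_1^2 + x_2^2 + x_3^2 + 16 x_4^2$, which never represents an integer $\equiv 7 \pmod 8$ (three squares cannot sum to $7 \pmod 8$, and $16 x_4^2 \equiv 0$). The Kloosterman/circle-method results you invoke only say that $R$ represents all sufficiently large integers that are \emph{locally represented over every $\IntegerRing_p$}, and a quaternary form can carry $\IntegerRing_p$-obstructions on an entire arithmetic progression. So your density comparison --- ``quaternary hits cofinitely many, ternary misses a positive proportion'' --- does not close: both sides may miss positive proportions of $\IntegerRing_{>0}$, and you have not shown that the quaternary form actually hits the particular residue class the ternary form avoids.

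The fix is exactly the local observation you already make about $S_2$, carried out over $\RationalField_p$ rather than $\IntegerRing_p$, and this is what the paper does. From $\Lambda_R \subset \Lambda_{S_2}$ over $\IntegerRing$ one gets the same inclusion over $\RationalField$ by clearing denominators, hence over $\RationalField_p$. Every non-degenerate quaternary form over $\RationalField_p$ represents all of $\RationalField_p^\times$, so $\Lambda_R(\RationalField_p) = \RationalField_p^\times$; but a positive-definite rational ternary form is anisotropic at some finite prime, where it omits an entire square class of $\RationalField_p^\times$. That is the contradiction --- no counting is needed, and the cases $n = 1, 2$ need no separate treatment. For the reduction from real to rational coefficients the paper also takes a different route from your dichotomy: it shows that any $S \in {\mathcal S}^N_{\succ 0}$ decomposes as $\sum_k \lambda_k S_k$ with each $S_k$ rational positive-definite and the $\lambda_k$ linearly independent over $\RationalField$, which sidesteps the boundary case you yourself flag as the main obstacle.
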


Lemma \ref{lem:decomposition of positive definite symmety matrices} is utilized in the proof.
\begin{lemma}\label{lem:decomposition of positive definite symmety matrices}
Any $S \in {\mathcal S}^N_{\succ 0}$ is represented 
as a finite sum $\sum_{k} \lambda_k S_k$ such that every $S_k$ is a positive-definite symmetric matrix with rational entries, and $\lambda_k \in \RealField_{> 0}$ are linearly independent over $\RationalField$.
\end{lemma}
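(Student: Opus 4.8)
To prove the lemma, the plan is to treat ${\mathcal S}^N$ as an $n$-dimensional real vector space ($n := N(N+1)/2$) carrying a natural rational structure, and to reduce the construction to finding a rational simplicial cone around $S$ inside the smallest rational subspace through $S$. First I would fix the standard rational basis $E_1,\dots,E_n$ of ${\mathcal S}^N$ (symmetrized matrix units) and write $S=\sum_{i=1}^n a_i E_i$ with $a_i\in\RealField$. Put $V:=\sum_{i=1}^n \RationalField a_i\subset\RealField$ and $r:=\dim_\RationalField V$; since $S\succ 0$ forces $S\neq 0$, we have $r\geq 1$. Choosing a $\RationalField$-basis $\nu_1,\dots,\nu_r$ of $V$ and writing $a_i=\sum_j b_{ij}\nu_j$ with $b_{ij}\in\RationalField$, one gets $S=\sum_{j=1}^r \nu_j R_j$ with $R_j:=\sum_i b_{ij}E_i$ a rational symmetric matrix. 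Because the $a_i$ span $V$, the matrix $(b_{ij})_{i,j}$ has rank $r$, so $R_1,\dots,R_r$ are $\RationalField$-linearly independent in ${\mathcal S}^N$ (rational $\RationalField$-independent vectors being automatically $\RealField$-independent); let $W_\RealField$ be their $\RealField$-span, an $r$-dimensional real subspace with rational structure $W:=\bigoplus_j \RationalField R_j$.

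Next I would note that $S\in W_\RealField$ and $S\succ 0$, so $K:=W_\RealField\cap{\mathcal S}^N_{\succ 0}$ is a nonempty, relatively open convex cone in $W_\RealField$ containing $S$. Working in the coordinates of $W_\RealField$ afforded by the basis $R_1,\dots,R_r$ (in which $S$ has coordinate vector $\nu=(\nu_1,\dots,\nu_r)$), the aim is to produce $w_1,\dots,w_r\in W_\RealField$ that are (i) $\RealField$-linearly independent, (ii) contained in $K$, and (iii) such that $S$ lies in the open cone $\{\sum_j t_j w_j : t_j>0\}$. A real tuple with these properties exists: take $w_j=\tfrac{1}{r}S+\delta v_j$ for small $\delta>0$, where $v_1,\dots,v_r$ is a basis of $W_\RealField$ with $\sum_j v_j=0$; then $\sum_j w_j=S$, and since $\tfrac{1}{r}S\in K$ and $K$ is open, $w_j\in K$ for $\delta$ small. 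Conditions (i)--(iii) are open conditions on $(w_1,\dots,w_r)\in (W_\RealField)^r$, so the set of admissible tuples is nonempty and open; as $W$ is dense in $W_\RealField$, it contains a tuple of rational points, i.e. of genuine rational matrices $S_1,\dots,S_r$ lying in ${\mathcal S}^N_{\succ 0}$.

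To finish, let $M\in GL_r(\RationalField)$ be the matrix whose columns are the coordinate vectors of $w_1,\dots,w_r$. Condition (iii) gives $S=\sum_j \lambda_j S_j$ with $\lambda=M^{-1}\nu$ and all $\lambda_j>0$. Since $M^{-1}$ has rational entries and $\nu_1,\dots,\nu_r$ are $\RationalField$-linearly independent, any rational relation $\sum_j q_j\lambda_j=0$ rewrites as $\sum_l (q^{T}M^{-1})_l \nu_l=0$, hence $q^{T}M^{-1}=0$ and $q=0$; thus $\lambda_1,\dots,\lambda_r$ are $\RationalField$-linearly independent, and $S=\sum_j \lambda_j S_j$ is the desired decomposition.

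The step I expect to be the real obstacle is reconciling positive-definiteness of the $S_k$ with $\RationalField$-linear independence of the $\lambda_k$: expanding $S$ directly in a rational basis of ${\mathcal S}^N$ consisting of positive-definite matrices yields coefficients that automatically lie in the $r$-dimensional space $V$ and so are generally $\RationalField$-dependent. The resolution above is to first restrict to the rational subspace $W_\RealField$ through $S$, so that a simplicial cone there has exactly $r$ generators, and to recover the $\lambda_j$ from the $\RationalField$-independent family $\nu_j$ by an invertible rational change of coordinates, which preserves $\RationalField$-independence. The only other point requiring care is the openness/density argument that makes the rational choice of the $w_j$ legitimate, which is routine once (i)--(iii) are stated as open conditions.
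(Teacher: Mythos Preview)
Your approach is essentially the same as the paper's: both pass to the $r$-dimensional rational subspace $W_\RealField$ determined by a $\RationalField$-basis of the span of the entries of $S$, produce $r$ rational positive-definite matrices in that subspace whose open simplicial cone contains $S$, and observe that the resulting coefficients are an invertible rational transform of a $\RationalField$-independent tuple (the paper does this via an explicit rank-one perturbation and the Sherman--Morrison identity, you via an openness/density argument).

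One slip to fix: you ask for ``a basis $v_1,\dots,v_r$ of $W_\RealField$ with $\sum_j v_j=0$,'' which is impossible since $r$ linearly independent vectors cannot sum to zero. What you actually need are vectors $v_1,\dots,v_r\in W_\RealField$ with $\sum_j v_j=0$ whose span is a hyperplane \emph{not} containing $S$ (for instance $v_j=e_j-\tfrac{1}{r}\sum_k e_k$ for a basis $e_1,\dots,e_r$ chosen so that $S$ is off that hyperplane); then $w_j=\tfrac{1}{r}S+\delta v_j$ are linearly independent, lie in $K$ for small $\delta$, and satisfy $S=\sum_j w_j$, so your open-set argument goes through.
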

\begin{proof}
For any $S := (s_{ij})_{1 \leq i, j \leq N} \in {\mathcal S}^N$,
let $v_S$ be the vector $\tr{(s_{11}, s_{12}, \ldots, s_{ij}, \ldots, s_{NN})}$ of length $\frac{N(N+1)}{2}$.
Then, ${\mathcal S}^N$ is identified in $\frac{N(N+1)}{2}$-dimensional vector space
by the map $S \mapsto v_S$.
Define a set $P$ by:
\begin{eqnarray}
	P &:=& \{ I \subset \{ s_{ij} : 1 \leq i \leq j \leq N \} : s_{ij} \in I \text{ are linearly independent over $\RationalField$} \}.
\end{eqnarray}
Then, $P$ is not empty.
Let $\{ t_1, \ldots, t_m \}$ be one of the maximal elements of $P$ under inclusive order.
When vectors $\tr{ (t_1, \ldots, t_m) }$ and $\tr{ (1, \ldots, 1) }$ of length $m$ are denoted by $\mathbf{t}$ and $\mathbf{1}_m$ respectively,
there exists an $\frac{N(N+1)}{2} \times m$ rational matrix $C$
such that $v_S = C \mathbf{t}$.
Furthermore, there exists $\epsilon > 0$ such that for any $U := (u_{ij}) \in GL_m(\RealField)$ with entries $\abs{ u_{ij} } < \epsilon$, every column of $C ({\mathbf t} \tr{ \mathbf{1}_m } - U)$ 
is the image of a positive-definite symmetric matrix by the map $S \mapsto v_S$.

Let $1_m$ be the identity matrix of size $m$.
If $\tr{\mathbf{1}_m} U^{-1} {\mathbf t} \neq 1$, we have equations:
\begin{eqnarray}
	({\mathbf t} \tr{\mathbf{1}_m} - U)^{-1}
 &=& U^{-1}  ( ( \tr{\mathbf{1}_m} U^{-1} {\mathbf t} - 1 )^{-1} {\mathbf t} \tr{\mathbf{1}_m} U^{-1} - 1_m ), \hspace{5mm} \\
	({\mathbf t} \tr{\mathbf{1}_m} - U)^{-1} {\mathbf t} &=& ( \tr{\mathbf{1}_m} U^{-1} {\mathbf t} - 1 )^{-1} U^{-1} {\mathbf t}. 
\end{eqnarray}
If all the entries of $U^{-1} \mathbf{t}$ are negative, we have that $\tr{\mathbf{1}_m} U^{-1} {\mathbf t} < 0$,
and every entry of $({\mathbf t} \tr{\mathbf{1}_m} - U)^{-1} {\mathbf t}$ is positive.
Clearly, there exists $U := (u_{ij}) \in GL_m(\RealField)$ such that 
$\abs{ u_{ij} } < \epsilon$,
every entry of $U^{-1} {\mathbf t}$ is negative,
and the matrix ${\mathbf t} \tr{ \mathbf{1}_m } - U$ is rational.
Fix such a $U$.

Let $S_k$ ($1 \leq k \leq m$) be a positive-definite symmetric matrix satisfying $C ({\mathbf t} \tr{ \mathbf{1}_m } - U) = (v_{S_1}, \ldots, v_{S_m})$.
$S$ is then represented as a linear sum of rational $S_k$ with positive coefficients as follows:
	\begin{eqnarray}
		v_S = C {\mathbf t} = (v_{S_1}, \ldots, v_{S_m}) ({\mathbf t} \tr{ \mathbf{1}_m } - U)^{-1} \mathbf{t}.
	\end{eqnarray}
Hence, the statement is proved.
\end{proof}

For any ring $R_2 \subseteq R$ and a symmetric matrix $S$ with entries in $R_2$, 
let $\Lambda_S(R)$ be the set $\{ \tr{v} S v : 0 \neq v \in R^N \}$ consisting of representations of $S$ over $R$.
If $0 \in \Lambda_S(R)$, $S$ is said to be \textit{isotropic} over $R$.
Otherwise, $S$ is \textit{anisotropic} over $R$.

\begin{proof}[Proof of Proposition \ref{lem:no sublattice}]
The statement holds if it is true when $N_2 + 1 = N = 4$.
By Lemma \ref{lem:decomposition of positive definite symmety matrices},
it is sufficient if the statement is proved in the case that $S$, $S_2$ are rational.
Any non-singular quadratic form over $\RationalField_p$ of rank 4 
satisfies $\Lambda_{S}(\RationalField_p) \supset \RationalField_p^\times$ for any $p$.
On the other hand, any anisotropic quadratic form over $\RationalField_p$ of rank 3,
there exists a finite prime $p$
such that 
$\Lambda_{S_2}(\RationalField_p) \not\supset \RationalField_p^\times$,
(\CF Corollary 2 of Theorem 4.1 in Chapter 6, \cite{Cassels78}). 
If $\Lambda_{S_2} \supset \Lambda_S$, 
$\Lambda_{S_2}(\RationalField) \supset \Lambda_{S}(\RationalField)$,
therefore $\Lambda_{S_2}(\RationalField_p) \supset \Lambda_{S}(\RationalField_p)$ is required for any $p$.
This is a contradiction.
\end{proof}

\bibliographystyle{plain}
\bibliography{docultex}

\begin{thebibliography}{10}

\bibitem{Bhargava2005}
M.~Bhargava and J.~Hanke.
\newblock Universal quadratic forms and the 290 theorem.
\newblock {\em Invent. Math.}, 2005.

\bibitem{Bieberbach11}
L.~Bieberbach.
\newblock {\"U}ber die bewegungsgruppen der euklidischen r{\"a}ume.
\newblock {\em Mathematische Annalen}, 70(3):297--336, 1911.

\bibitem{Bieberbach12}
L.~Bieberbach.
\newblock {\"U}ber die bewegungsgruppen der euklidischen r{\"a}ume (zweite
  abhandlung.) die gruppen mit einem endlichen fundamentalbereich.
\newblock {\em Mathematische Annalen}, 72(3):400--412, 1912.

\bibitem{Boultif2004}
A.~Boultif and D.~Lou{\"e}r.
\newblock Powder pattern indexing with the dichotomy method.
\newblock {\em J. Appl. Cryst.}, 37:724--731, 2004.

\bibitem{Cassels78}
J.~W.~S. Cassels.
\newblock {\em Rational Quadratic Forms}.
\newblock Academic Press, London/New York, 1978.

\bibitem{Conway97}
J.~H. Conway.
\newblock {\em The sensual (quadratic) form}.
\newblock Carus Mathematical Monographs 26, Mathematical Association of
  America, 1997.

\bibitem{Conway92}
J.~H. Conway and N.~J.~A. Sloane.
\newblock Low-dimensional lattices vi: Voronoi reduction of three-dimensional
  lattices.
\newblock {\em Proc. Royal Soc. London, Series A}, 436:55--68, 1992.

\bibitem{Conway98}
J.~H. Conway and N.~J.~A. Sloane.
\newblock {\em Sphere Packings, Lattices and Groups (3rd ed.)}, volume 290 of
  {\em Grundlehren der mathematischen Wissenschaften}.
\newblock Springer, 1998.

\bibitem{Wolff57}
P.~M. de~Wolff.
\newblock On the determination of unit-cell dimensions from powder diffraction
  patterns.
\newblock {\em Acta Cryst.}, 10:590--595, 1957.

\bibitem{Wolff58}
P.~M. de~Wolff.
\newblock Detection of simultaneous zone relations among powder diffraction
  lines.
\newblock {\em Acta Cryst.}, 11:664--665, 1958.

\bibitem{Wolff68}
P.~M. de~Wolff.
\newblock A simplified criterion for the reliability of a powder pattern
  indexing.
\newblock {\em J. Appl. Cryst.}, 1:108--113, 1968.

\bibitem{Engel84}
P.~Engel, T.~Matsumoto, G.~Steinmann, and H.~Wondratschek.
\newblock The non-characteristic orbits of the space groups.
\newblock {\em Z. Kristallogr., Supplement Issue No. 1}, 1984.

\bibitem{Gruber89}
P.~M. Gruber and S.~S. Ryshkov.
\newblock Facet-to-facet implies face-to-face.
\newblock {\em European Journal of Combinatorics}, 10:83--84, 1989.

\bibitem{Hahn83}
T.~Hahn.
\newblock {\em International tables for Crystallography}, volume~A.
\newblock Dordrecht:Kluwer, 1983.

\bibitem{Ito49}
T.~Ito.
\newblock A general powder x-ray photography.
\newblock {\em Nature}, 164:755--756, 1949.

\bibitem{Kac66}
M.~Kac.
\newblock Can one hear the shape of a drum?
\newblock {\em American Mathematical Monthly}, 73(4):1--23, 1966.

\bibitem{Lagarias90}
J.~C. Lagarias, H.~W. Lenstra, JR., and C.~P. Schnorr.
\newblock Korkin-zolotarev bases and successive minima of a lattice and its
  reciprocal lattice.
\newblock {\em COMBINATORICA}, 10(4):333--348, 1990.

\bibitem{LeBail2002}
A.~Le~Bail.
\newblock Monte carlo indexing with {M}c{M}aille.
\newblock {\em Powder Diffraction}, 19:249--254, 2004.

\bibitem{Moon2008}
Y.~S. Moon.
\newblock Universal quadratic forms and the 15-theorem and 290-theorem.
\newblock {\em Thesis (Stanford University)}, 2008.

\bibitem{Neumann2003}
M.~A. Neumann.
\newblock X-cell: a novel indexing algorithm for routine tasks and difficult
  cases.
\newblock {\em J. Appl. Cryst.}, 36(4):356--365, 2003.

\bibitem{Niggli28}
P.~Niggli.
\newblock {\em Kristallographische und strukturtheoretische Grundbegriffe.
  Handbuch der Experimentalphysik}, volume~7.
\newblock Leipzig: Akademische Verlagsgesellschaft, 1928.

\bibitem{Tomiyasu2012}
R.~Oishi-Tomiyasu.
\newblock Rapid bravais-lattice determination algorithm for lattice parameters
  containing large observation errors.
\newblock {\em Acta Cryst. A.}, 68:525--535, 2012.

\bibitem{Ryshkov76}
S.~S. Ryshkov and E.~P. Baranovskii.
\newblock C-types of n-dimensional lattices and 5-dimensional primitive
  parallelohedra (with application to the theory of coverings).
\newblock {\em Proceedings of the Steklov Institute of Mathematics}, 137, 1976.

\bibitem{Schiemann90}
A.~Schiemann.
\newblock Ein beispiel positiv definiter quadratischer formen der dimension 4
  mit gleichen darstellungszahlen.
\newblock {\em Archiv der Mathematik}, 54:372--375, 1990.

\bibitem{Schiemann97}
A.~Schiemann.
\newblock Ternary positive definite quadratic forms are determined by their
  theta series.
\newblock {\em Mathematische Annalen}, 308:507--517, 1997.

\bibitem{Selling1874}
E.~Selling.
\newblock {\"U}ber die bin{\"a}ren und tern{\"a}ren quadratischen formen.
\newblock {\em J. Reine Angew. Math.}, 77:143--229, 1874.

\bibitem{Templeton56}
D.~H. Templeton.
\newblock Systematic absences corresponding to false symmetry.
\newblock {\em Acta Cryst.}, 9:199--200, 1956.

\bibitem{Vallentin2003}
F.~Vallentin.
\newblock {\em Sphere coverings, lattices, and tilings (in low dimensions)}.
\newblock PhD thesis, Technical University Munich, Germany, 2003.

\bibitem{Visser69}
J.~W. Visser.
\newblock A fully automatic program for finding the unit cell from powder data.
\newblock {\em J. Appl. Crystallogr.}, 2:89--95, 1969.

\bibitem{Voronoi07}
G.~F. Voronoi.
\newblock Sur quelques proprietes des formes quadratiques positives parfaites.
\newblock {\em J. Reine Angew. Math.}, 133:97--178, 1907.

\bibitem{Voronoi08}
G.~F. Voronoi.
\newblock Nouvelles applications des parametres continus a la theorie des
  formes quadratiques.
\newblock {\em J. Reine Angew. Math.}, 134:198--287, 1908.

\bibitem{Watson79}
G.~L. Watson.
\newblock Determination of binary quadratic form by its values at integer
  points.
\newblock {\em MATHEMATIKA}, 26:72--75, 1979.

\bibitem{Watson80}
G.~L. Watson.
\newblock Determination of binary quadratic form by its values at integer
  points: Acknowledgement.
\newblock {\em MATHEMATIKA}, 27:188, 1980.

\bibitem{Werner85}
P.-E. Werner, L.~Eriksson, and M.~Westdahl.
\newblock Treor, a semi-exhaustive trial-and-error powder indexing program for
  all symmetries.
\newblock {\em J. Appl. Cryst.}, 18:367--370, 1985.

\end{thebibliography}

\end{document}